\documentclass[12pt,reqno,oneside]{amsart}
\usepackage{mathdots}
\usepackage{amsmath}
\usepackage{amssymb}
\usepackage{pgfplots}
\pgfplotsset{compat=1.18}
\usepackage{amsthm}
\usepackage{graphics}
\usepackage{eucal}
\usepackage{bm}
\usepackage[colorlinks]{hyperref}
\usepackage[capitalize,nameinlink,noabbrev]{cleveref}
\usepackage{mathrsfs}
\usepackage[normalem]{ulem}
\usepackage{color}
\usepackage[dvipsnames]{xcolor}
\usepackage{mathtools}
\usepackage{geometry}
\newtheorem{theoremKM}{Theorem KM}

\newtheorem{theorem}{Theorem}[section]
\newtheorem{proposition}{Proposition}[section]
\newtheorem{lemma}{Lemma}[section]
\newtheorem{corollary}{Corollary}[section]

\theoremstyle{remark}
\newtheorem{remark}{Remark}[section]
\theoremstyle{definition}
\newtheorem{definition}{Definition}[section]

\numberwithin{equation}{section}

\newcommand{\vv}[1]{{\mathbf{#1}}}
\newcommand{\ep}{\epsilon}

\newcommand{\be}{\mathbf{e}}

\newcommand{\diag}{\operatorname{diag}}

\newcommand{\bx}{\mathbf{x}}
\newcommand{\bU}{\mathbf{U}}
\newcommand{\bT}{\mathbf{T}}

\newcommand{\Leb}{\mathcal{L}}
\newcommand{\bp}{\mathbf{p}}

\newcommand{\bv}{\mathbf{v}}
\newcommand{\bb}{\mathbf{b}}
\newcommand{\bu}{\mathbf{u}}
\newcommand{\bw}{\mathbf{w}}
\newcommand{\bs}{\mathbf{s}}
\newcommand{\f}{\mathbf{f}}

\newcommand{\ve}{\varepsilon}

\newcommand{\bz}{\mathbf{z}}

\newcommand{\ba}{\mathbf{a}}

\newcommand{\Span}{\operatorname{Span}}

\newcommand{\Deltanew}{\Delta_{new}}

\newcommand{\by}{\mathbf{y}}

\newcommand{\Q}{\mathbb{Q}}

\newcommand{\Z}{\mathbb{Z}}
\newcommand{\R}{\mathbb{R}}
\newcommand{\N}{\mathbb{N}}
\newcommand{\fff}{\mathit{f}}

\newcommand{\cM}{\mathcal{M}}

\newcommand{\inv}{^{\text{-}1}}

\usepackage{mathtools}
\usepackage{tikz-cd}
\usepackage{graphicx}
\begin{document}

\title[Rational points and manifolds]{How close can rational points get to a manifold?}

\author{Victor Beresnevich}
\address{\textbf{Victor Beresnevich} \\
Department of Mathematics, University of York, UK }
\email{victor.beresnevich@york.ac.uk}
\author{Shreyasi Datta}
\address{\textbf{Shreyasi Datta} \\
TSMU, ISI Bangalore, India}
\email{shreyasi@isibang.ac.in}

\date{}

\maketitle
{\centering \textit{In memory of Maurice Dodson}\par}

\begin{abstract}
We prove the heuristically predicted lower bound for the number of rational points of height at most $Q$ lying within $\ep/Q$ of a fixed analytic nondegenerate manifold in $\mathbb{R}^n$, provided that $\ep \asymp Q^{-\tau}$ for some $\tau \leq 3/(2m+1)$, where $m$ is the codimension of the manifold. Our result establishes the lower bound well beyond the previously conjectured range $\tau \leq 1/m$, and improves upon a recent result of Schindler, Srivastava, and Technau, who established the same lower bound for $\tau \leq 3/(2n-1)$.
\end{abstract}

\section{Introduction}

The main goal of this paper is to understand how close rational points of bounded height can get to a generic submanifold of $\mathbb{R}^n$. Here we address this challenging problem for the natural class of analytic nondegenerate manifolds, which includes all connected analytic submanifolds of $\R^n$ not contained in any proper affine subspace \cite{KM}.

More precisely, we investigate the following counting function:
\begin{equation}
    N(\ep, t):=\#\left\{(q, \bp)\in \Z_{\rm prim}^{n+1}~|~\inf_{\by\in \mathcal{M}}\left\Vert\frac{\bp}{q}-\by\right\Vert\le\frac{\ep}{e^t},~ e^{t-1}\leq q< e^t\right\},
\end{equation}
where $\cM$ denotes a fixed bounded submanifold of $\R^n$, $t\in\N$, $0<\ep<1$ and $\Z^{n+1}_{\rm prim}$ is the subset of $\Z^{n+1}$ of primitive points. Recall that $(a_0,\dots,a_n)\in\Z^{n+1}$ is primitive if and only if $\gcd(a_0,\dots,a_n)=1$, and so primitive points $(q,\bp)\in\Z^{n+1}$ with $q>0$ are in one to one correspondence with rational points $\bp/q\in\Q^n$. If needed, we will also write $N_{\cM}(\ep,t)$ for $N(\ep,t)$ to explicitly show the dependence on $\cM$.

In what follows, $d$ and $m$ will denote the dimension and codimension of $\cM$ respectively. Thus, $d+m=n$. Given two quantities $X$ and $Y$, we write $X\ll Y$ if there exists a constant $C>0$ such that $X\leq CY$. We write $X\asymp Y$ if $X\ll Y$ and $Y\ll X$.

The question alluded to in the title can now be rephrased as follows: how small can $\ep$ be, as a function of $t$, while satisfying $N(\ep,t)>0$? It is also natural to ask about the asymptotic behaviour of $N(\ep,t)$ as $t\to\infty$ and $\ep\to0$ \cite{Ber12, Huang2020}. Assuming that the manifold $\cM$ neither aligns with nor repels rational points in $\R^n$ in some pathological way, and that $\ep$ is not too small, it is reasonable to expect the following volume-based 
$$
\textbf{Heuristic estimate:}\qquad N(\ep,t)\asymp \ep^m e^{t(d+1)}.\phantom{\textbf{Heuristic estimate:}\qquad}
$$

Motivated by its natural applications to Diophantine approximation, the problem of determining the asymptotic behaviour of $N(\ep,t)$ was introduced by Bernik and Dodson in their celebrated monograph \cite[\S4.4, p.~95]{BernikDodsonBook}. Their primary motivation was to obtain an analogue of the Jarník--Besicovitch theorem, see \cite{MR5011579} and references within, for the unit circle $x^2+y^2=1$ and, more generally, for planar curves.

For $C^3$ nondegenerate planar curves the problem was solved in \cite{BDV07, Huxley, VV06}, with the $C^3$ condition removed in \cite{BZ10, Hua15}. In particular, 
\begin{equation}\label{LBdim2}
N(\ep,t)\gg \ep e^{2t}\qquad\text{whenever $\ep\gg e^{-t}$,}
\end{equation}
while the complementary upper bound holds, assuming the curvature is uniformly bounded between positive constants, whenever $\ep\gg e^{-t}t^p$ for some sufficiently large $p>0$. Furthermore, the condition $\ep\gg e^{-t}$ in \eqref{LBdim2} is sharp within the class of nondegenerate planar curves. For example, for the circle $x^2+y^2=3$ there exists an explicitly computable constant $c_0>0$ such that $N(\ep,t)=0$ whenever $\ep<c_0e^{-t}$.

Of course, the above example is rather special, since the curve is a rational quadric curve. By contrast, Oppenheim's conjecture, famously proved by Margulis in 1986, implies \cite[\S5.1]{MR4422058} that if $\cM$ is a quadratic planar curve not defined over $\Q$, then for every $c>0$ one has $N(\ep,t)>0$ with $\ep=ce^{-t}$ for arbitrarily large values of $t$.

More recently, Smith \cite{Sch2025} extended this result to sufficiently smooth nondegenerate curves in $\R^2$ and, similarly, to hypersurfaces in $\R^n$ that are not quadratic hypersurfaces defined over $\Q$. Furthermore, for analytic nondegenerate hypersurfaces, that is, when $m=1$ and $d+1=n$, the answer to the question posed in the title is essentially the same as for planar curves, which are precisely the hypersurfaces in $\R^2$, see \cite{Ber12}. Furthermore, 
\[
N(\ep,t)\gg \ep e^{nt}\qquad\text{whenever $\ep\gg e^{-t}$,}
\]
and the lower bound on $\ep$ is sharp \cite[Example 1.2]{Ber12}.

For manifolds of codimension $m>1$, the situation is more complicated. For convenience, in what follows, we will write $\eta(\cM)\ge \eta_0$ to mean that
\begin{equation}
N(\ep,t)\gg \ep^m e^{t(d+1)}
\quad\text{for all } t\gg1 \text{ and } 1>\ep\gg e^{-t\eta_0},
\end{equation}
where the implied constants may depend on  $n$, $d$, $\eta_0$, and $\cM$.

As shown in \cite{Ber12}, for any analytic nondegenerate manifold $\cM$ of codimension $m$ in $\R^n$
\begin{equation}\label{eq: lb Ber}
\eta(\cM)\geq \frac{1}{m}.
\end{equation}
Furthermore, for nondegenerate curves (i.e. $d=1$), it was shown in \cite{Ber12} and \cite{BVVZ2018} that
\begin{equation}\label{eq: lb SST}
\eta(\cM)\geq \frac{3}{2n-1},
\end{equation}
which, for $n>2$, is stronger than the general bound $\frac{1}{m}$ given by \eqref{eq: lb Ber}.

Extending \eqref{eq: lb Ber} beyond the analytic setting turned out to be highly non-trivial. The best known bound \cite{BD} valid for all nondegenerate manifolds of dimension $d>1$ is
\[
\eta(\cM)\geq \frac{1}{n-1}.
\]
Additional smoothness assumptions were required to obtain stronger estimates. In particular, Schindler--Srivastava--Technau \cite{SST} recently proved that, for any sufficiently smooth nondegenerate manifold $\cM$, for any $\delta>0$
\begin{equation}\label{SST0}
\eta(\cM)\geq \frac{3}{2n-1}-\delta.
\end{equation}
Apart form $\delta$, for nondegenerate curves this is the same as \eqref{eq: lb SST}; see also Remark~\ref{rem: count for more general} below showing how $\delta$ can be removed. Within the class of analytic nondegenerate manifolds, \eqref{SST0} improves upon \eqref{eq: lb Ber} whenever $m>(2n-1)/3$. In this paper we improve both \eqref{eq: lb Ber} and \eqref{SST0} for any analytic manifolds with $m>1$. Our main result reads as follows.

\begin{theorem}\label{thm: main}
Let $\mathcal{M}$ be an analytic nondegenerate manifold in $\R^n$ of dimension $d$ and codimension $m$. Then for all sufficiently large $t$ and $\ep\in(0,1)$,
\begin{equation}\label{eq: main thm}
    N(\ep, t)\gg \ep^m e^{t(d+1)}\qquad\text{if }\ep\gg e^{-\frac{3t}{2m+1}},
\end{equation}
where the implied constants depend on $\mathcal{M}$. In particular,
\begin{equation}\label{OURS}
\eta(\mathcal{M})\geq \frac{3}{2m+1}.
\end{equation}
\end{theorem}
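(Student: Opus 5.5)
We would follow the dynamical framework of \cite{Ber12}, which reduces lower bounds for $N(\ep,t)$ to a quantitative non-divergence statement for a family of lattices. Write $\cM$ locally as a graph $\{(\bx,\f(\bx)):\bx\in U\}$ with $U\subset\R^d$ and $\f=(f_1,\dots,f_m)$ analytic. A rational point $\bp/q=(\ba/q,\bb/q)$ within $\ep e^{-t}$ of $(\bx,\f(\bx))$ corresponds to an integer vector $(q,\ba,\bb)$ with $|q|\asymp e^t$, $|q\bx-\ba|\ll \ep$ and $|q\f(\bx)-\bb|\ll\ep$. Following \cite{Ber12}, for each $\bx$ we form the unipotent matrix $u_\bx$ encoding the tangent data (first order Taylor expansion of $\f$ at $\bx$) and a diagonal flow $g_t$ that contracts the $m$ normal directions by $\ep^{-1}e^{-t}$-type factors, expands $q$, and acts mildly on the $d$ tangent directions. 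This is arranged so that a nonzero vector of $g_tu_\bx\Z^{n+1}$ in a fixed box yields a rational point near $\cM$ over a neighbourhood of $\bx$ of measure $\asymp\ep^{m}$-scaled size. Using the $d$ tangent directions to spread out the points then gives the target count $\ep^me^{t(d+1)}$, via the standard argument: ubiquity / Minkowski's theorem plus non-divergence imply that for most $\bx$ the lattice has a \emph{primitive} short vector generating a distinct rational point, and a volume/disjointness count finishes.

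The key input is therefore a quantitative non-divergence estimate (Kleinbock--Margulis type, \cite{KM}) for the measure $\bx\mapsto g_tu_\bx\Z^{n+1}$: the set of $\bx\in U$ for which $g_tu_\bx\Z^{n+1}$ has a nonzero vector of length $<\delta$ should have measure $\ll\delta^{\alpha}|U|$. By the KM machinery this reduces to bounding, for every primitive subgroup $\Gamma\subset\Z^{n+1}$ of rank $k$, the quantity $\sup_{\bx\in B}\|g_tu_\bx\Gamma\|$ from below by a constant. Here the novelty over \cite{Ber12} must enter. The $1/m$ range in \cite{Ber12} comes from a crude lower bound on these covolumes using only the first derivatives. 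To reach $3/(2m+1)$ we will use second order information: we replace $u_\bx$ by a matrix that also records the Hessians of $f_1,\dots,f_m$, in the spirit of the quadratic-Taylor approach of \cite{SST}. We then exploit analyticity, so that nondegeneracy gives nonvanishing of suitable Wronskian-type minors on an open dense set. The aim is to show that the exterior-power coordinates of $g_tu_\bx(\bw_1\wedge\dots\wedge\bw_k)$ contain a polynomial-like function in $\bx$ whose sup over $B$ is $\gg1$, for $\ep\gg e^{-3t/(2m+1)}$. The exponent $3/(2m+1)$ should arise from balancing the scaling of the $m$ normal coordinates, weighted by $\ep^{-1}$, against the contribution of second derivatives at scale $(\ep e^{-t})^{1/2}$ in the $d$ tangent directions. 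The point is that only the normal dimension $m$, not $n$, appears in the balance.

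The main obstacle is this covolume lower bound uniformly over all ranks $k$ and all $\Gamma$. This is exactly where \cite{SST} lost $n$ in place of $m$, because they treated all coordinates symmetrically. Our first attempt will be to decompose $\bigwedge^k\R^{n+1}$ according to eigenvalues of $g_t$ and handle the worst eigenspaces using the analytic nondegeneracy of the normal map $\bx\mapsto(\nabla f_i(\bx),\nabla^2 f_i(\bx))$. That nondegeneracy will be verified via the $(C,\alpha)$-good property of analytic functions from \cite{KM}. After that, the remaining steps (primitivity, removing multiplicities, summing over a cover of $\cM$) are routine, as in \cite{Ber12}.
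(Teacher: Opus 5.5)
Your proposal has a genuine gap at the step you yourself flag as the main obstacle. The covolume lower bound is the whole content of the improvement (Proposition~\ref{prop: crucial}, which gives Theorem~\ref{new thm: BKM}), and you leave it as a ``first attempt''. The tools you name would not supply it. The $(C,\alpha)$-good property is only hypothesis~(1) of Theorem~KM and says nothing about the lower bounds $\sup_{\bx\in B}\|h(\bx)\bv_1\wedge\dots\wedge h(\bx)\bv_r\|\ge\rho$ required in hypothesis~(2). Hessians also need not enter the lattice: second derivatives are used only through a uniform bound in Taylor's formula, which fixes the tangent scale $(\ep e^{-t})^{1/2}$.

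What works is the following. Apply Theorem~KM to the dual family $b\,u_1^\star(\bx)\Z^{n+1}$ with $b=\diag(b_0,b_1,\dots,b_1,b_{d+1},\dots,b_{d+1})$, $b_0,b_1\ge1\ge b_{d+1}$ and $b_0b_1b_{d+1}^m\ge1$. For a primitive $\bv_1,\dots,\bv_r$ with $r\ge2$, take an integral orthogonal complement $\bu_1,\dots,\bu_{n+1-r}$ (Lemmas~\ref{lem: dual prim} and~\ref{lem: dual basis consq}). Then the coordinate indexed by $I=\{0,i_1,\dots,i_{s+1},j_1,\dots,j_{r-2-s}\}$, with tangent indices $i_k$ and normal indices $j_k$, equals $b_0\prod_k b_{i_k}\prod_k b_{j_k}\,|\tilde F\wedge\bigwedge_k\partial_{i_k}\tilde F\wedge\bigwedge_k\be_{j_k}\wedge\bu_1\wedge\dots\wedge\bu_{n+1-r}|$. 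Choose $s$ minimal such that $\bigwedge_k\be_{j_k}\wedge\bu_1\wedge\dots\wedge\bu_{n+1-r}\neq0$ for some choice of the $j_k$.
\begin{itemize}
\item If $s=0$, the coordinate is a pairing of $\tilde F\wedge\partial_i\tilde F$ with a nonzero integral bivector, bounded below on $B$ by the skew-gradient Lemma~\ref{lem:crucial nabla}.
\item If $s>0$, minimality lets you factor out all of $\be_{d+1},\dots,\be_n$ (Lemma~\ref{lem: helpful-general}). Then $\partial_i\tilde F$ may be replaced by $\be_i$, leaving $|\tilde F(\bx)\cdot\ba|$ for a nonzero integral $\ba$, bounded below by Lemma~\ref{lem: just for rank 1}.
\end{itemize}
In both cases $I$ contains $0$, at least one tangent index and at most $m$ normal ones, so the weight is at least $b_0b_1b_{d+1}^m\ge1$. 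That is why $m$ replaces $n$; no eigenspace decomposition or Hessian data is involved.

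Your counting scheme also has to change, because one short vector per neighbourhood cannot produce $\ep^me^{t(d+1)}$ points in this range.
\begin{itemize}
\item With neighbourhoods of the Taylor size $(\ep e^{-t})^{1/2}$ it yields only $\asymp\ep^{-d/2}e^{td/2}$ points. This falls short by the factor $\ep^{m+d/2}e^{(d/2+1)t}\ge1$.
\item With a unimodular $g_t$ the tangent weight is forced to be $(\ep^me^t)^{1/d}$. Running the same dual analysis with $\delta=e^{-t}$, $K=(\ep^me^t)^{1/d}$, $T=\ep^{-1}$ gives $\delta KT^m=(\ep^me^t)^{-(d-1)/d}$. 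For $d\ge2$ this is small only when $\ep\gg e^{-t/m}$, i.e.\ nothing beyond \cite{Ber12}.
\end{itemize}
The paper instead takes $g=\diag(\ep^{-1}\mathrm{I}_m,(\ep e^t)^{-1/2}\mathrm{I}_d,e^{-t})$ and cubes of side $(\ep e^{-t})^{1/2}$. The lattice $gu_1(\bx)\Z^{n+1}$ has covolume $\ep^{-m-d/2}e^{-(d/2+1)t}\ll1$. So the thing to prove is not that it rarely has short vectors, but that $\lambda_{n+1}(gu_1(\bx)\Z^{n+1})<1$ for most $\bx$; this puts $\asymp\ep^{m+d/2}e^{(d/2+1)t}$ lattice points in a bounded box. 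By Mahler duality this amounts to the smallness of $\Deltanew(\delta,K,T)$ with $\delta=e^{-t}$, $K=(\ep e^t)^{-1/2}$, $T=\ep^{-1}$, and $\delta KT^m=e^{-3t/2}\ep^{-m-1/2}$ is exactly where $3/(2m+1)$ comes from. Primitivity also needs an argument, not a citation to routine steps: shift by a lattice point with $q\asymp e^t$ (Lemma~\ref{lem: one good point}), expand in a reduced basis (Lemma~\ref{reduced_basis}), and count coprime pairs in two coordinates (Lemma~\ref{gcd}).
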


Note that $\frac{3}{2m+1}>\max\left\{\frac{1}{m}, \frac{3}{2n-1}\right\}$ for all $m>1$, with equality at $m=1$ and $m=n-1$. Thus, our result improves both \eqref{eq: lb Ber} and \eqref{SST0}. Also, it is interesting to note that the above new exponent (for all nondegenerate manifolds) was not suggested or explored before in any previous work.

The exponent in \eqref{OURS} is achieved by improving the measure estimate established by Bernik--Kleinbock--Margulis \cite{BKM} which we shall refer to as the \emph{BKM estimate}. We note that the BKM estimate was used in all previous works of this ilk, including \cite{BD}, \cite{BY}, and \cite{SST}. Our paper's novelty is in improving the existing BKM estimate so that it is useful when certain \textit{derivatives} are quite small. The improved BKM estimate, stated as Theorem~\ref{new thm: BKM} below, needs the analyticity assumption and is derived from the quantitative nondivergence estimate of Kleinbock--Margulis \cite{KM}, stated in \S\ref{sec: KM}. In short, the main ingredients behind the proof of \eqref{OURS} are the improved BKM estimate (Theorem~\ref{new thm: BKM}) together with a new duality argument for detecting rational points near manifolds, inspired by \cite{BY}, where duality was used to obtain upper bounds.

\subsection{An improvement of the BKM estimate}

Here, we state and discuss our improvement (Theorem~\ref{new thm: BKM}) of the BKM estimate  \cite{BKM}, which concerns the set
\begin{equation}\label{new set}
\Deltanew(\delta, K, T):=\left\{\bx\in \bU:\exists\;(a_0,\ba)\in\Z\times\Z^n_{\neq\bf0}\;\;\left.
\begin{array}{l}
|a_0+F(\bx)\ba^\top|<\delta\\[1ex]
\|\nabla F(\bx)\ba^\top\|<K\\[1ex]
\vert a_i\vert<T ~~(d+1\leq i\leq n) 
\end{array}
\right.\right\}.
\end{equation}
Here, $\ba=(a_1,\dots,a_n)$ and $\delta,K,T$ are positive real parameters, $\nabla$ denotes the gradient of a real valued function, $\|\cdot\|$ denotes the supremum norm, $\ba^\top$ is the transpose of $\ba$, and
$F:\bU\to\R^n$ is a map defined on an open set $\bU\subset\R^d$ of the form
\begin{equation}\label{eq: F}
F(\bx) = (\bx, f(\bx)),\qquad\text{where $\bx \in \bU$, 
$f(\bx) = (f_i(\bx))_{i=1}^m$, and $m = n - d$.}
\end{equation}
In what follows, $\mu_d$ denotes $d$ dimensional Lebesgue measure.

\begin{theorem}\label{new thm: BKM}
Let $\bU\subset \R^d$ an open set, $m+d=n$, and the map $F:\bU\to\R^d$ of the form \eqref{eq: F} be analytic. Suppose further that $F$ is nondegenerate at $\bx_0$ for some $\bx_0\in \bU$. Then there exists an open neighborhood $V\ni\bx_0$ and $\alpha>0$ with the following property. For any ball $B\subset V$, there exists a constant $E$ such that for any choice of $\delta>0$, $K>0$, and $T>0$ satisfying
\begin{equation}\label{condition on K T}
 \max\{\delta^{m+2}, K^{m+2}\}\leq \delta K T^m,\qquad \delta K\leq T^{2},
\end{equation}
we have that
\begin{equation}\label{eq: crucial improvement}
\mu_d(\Deltanew(\delta, K, T)\cap B)\leq E (\delta K T^m)^{\alpha/(m+2)}\mu_d(B)\,.
\end{equation}
\end{theorem}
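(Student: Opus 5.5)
We follow the Bernik--Kleinbock--Margulis scheme: encode the inequalities in \eqref{new set} as the statement that a unimodular lattice is short, then apply the quantitative nondivergence estimate of Kleinbock--Margulis from \S\ref{sec: KM}. The new ingredient is the choice of diagonal matrix, which now treats the three groups of coordinates $a_0$, $(a_1,\dots,a_d)$ and $(a_{d+1},\dots,a_n)$ with separate weights $\delta$, $K$ and $T$.

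\textbf{Lattice encoding.} For $\bx\in\bU$ set
\[
u_\bx=\begin{pmatrix}1 & F(\bx)\\ 0 & I_n\end{pmatrix},
\qquad
U_\bx=\begin{pmatrix}1&0&0\\0&I_d&0\\0&\nabla f(\bx)^\top & I_m\end{pmatrix}
\]
acting on columns $(a_0,\ba)^\top$ (transposes adjusted as needed). The point is that $U_\bx u_\bx(a_0,\ba)^\top$ has the following coordinates:
\begin{itemize}
\item first coordinate $a_0+F(\bx)\ba^\top$;
\item middle block $\nabla F(\bx)\ba^\top=(a_i+\sum_j a_{d+j}\partial_i f_j(\bx))_{i\le d}$;
\item last block $(a_{d+1},\dots,a_n)$.
\end{itemize}
Now let $g=\diag(\delta^{-1}\lambda, K^{-1}\lambda,\dots,K^{-1}\lambda, T^{-1}\lambda,\dots,T^{-1}\lambda)$, with $\lambda$ fixed by $\det g=1$. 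This gives
\[
\lambda^{n+1}=\delta K^dT^m .
\]
Thus $\bx\in\Deltanew$ implies that the lattice $g U_\bx u_\bx\Z^{n+1}$ has a nonzero vector of norm $<\lambda$. This is the analogue of the BKM reduction, with the roles of $\delta,K,T$ kept separate.

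\textbf{Verifying the $(C,\alpha)$-good and norm conditions.} Analyticity and nondegeneracy at $\bx_0$ give a neighbourhood $V$ and $\alpha>0$ on which all functions of the form $\bx\mapsto\|g U_\bx u_\bx\,w\|$, for $w$ a decomposable multivector in $\bigwedge^j\Z^{n+1}$, are $(C,\alpha)$-good. This is standard from \cite{KM}: their coordinates lie in the finite-dimensional space spanned by $1$, $f_i$, $\partial_k f_i$, $x_k$ and products thereof, and in the analytic case such spaces of functions are $(C,\alpha)$-good. Since $B\subset V$, this is where $E$ depends on $B$.

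The remaining hypothesis is a lower bound $\sup_{B}\|gU_\bx u_\bx w\|\ge\rho$ for every primitive $w$, uniformly in $j$. For $j=1$ and $j=n$ this amounts to lower bounds on $\sup_B$ of the linear forms $a_0+F\ba^\top$, $\nabla F\ba^\top$ and the last block, all scaled. Intermediate $j$ follows the BKM treatment of $\bigwedge^j$, with its key use of nondegeneracy: $\sup_B$ of the coordinates of a wedge involving the $e_0$-direction dominates $\max(\text{coefficients})$.

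The main obstacle is that the new diagonal weights are unequal across three blocks. The minimum of $\sup_B\|\cdot\|$ over primitive $w$ therefore has to be estimated by a case analysis on how many of the $j$ basis directions fall in each block. In each case we need
\[
\sup_B\|g U_\bx u_\bx w\|\gg \min\bigl(1,\ \text{product of scalings}\bigr)\ge\rho .
\]
Conditions \eqref{condition on K T} are exactly what should make every such product $\ge(\delta K T^m)^{1/(m+2)}$ up to constants. I would first check the extreme cases: $w$ involving only $T$-directions (using $\delta K\le T^2$), and $w$ involving the $\delta$ and $K$ directions (using $\max\{\delta,K\}^{m+2}\le\delta KT^m$). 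I would then interpolate, exploiting the fact that a skew-symmetric combination such as $\nabla F\ba^\top$ must be nonzero on $B$ whenever $\ba\ne0$. This step follows the way BKM use the identity relating $\nabla$ of $a_0+F\ba^\top$ to the middle block, and I expect the bookkeeping to be the hardest part.

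\textbf{Conclusion.} Apply the nondivergence estimate of Kleinbock--Margulis with $\varepsilon=\lambda$ and the $\rho$ from the previous step. Since $\lambda/\rho\asymp(\delta K T^m)^{1/(m+2)}$ after the normalisation implicit in the conditions \eqref{condition on K T}, this yields
\[
\mu_d(\Deltanew\cap B)\le E(\lambda/\rho)^\alpha\mu_d(B),
\]
which is \eqref{eq: crucial improvement}.
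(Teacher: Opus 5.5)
\textbf{There is a genuine gap, and it lies in the normalisation $\det g=1$, not only in the bookkeeping you defer.}

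\textbf{Why $\det g=1$ fails.} Put $\mu:=(\delta KT^m)^{1/(m+2)}$ and $b:=\mu\,\diag(\delta^{-1},K^{-1},\dots,K^{-1},T^{-1},\dots,T^{-1})$. Then your $g$ equals $(\lambda/\mu)b$, where $(\lambda/\mu)^{n+1}=(K/\mu)^{d-1}\le 1$. So the sets $\{\lambda_1(gU_\bx u_\bx\Z^{n+1})<\lambda\}$ and $\{\lambda_1(bU_\bx u_\bx\Z^{n+1})<\mu\}$ coincide. However, Theorem~KM is not invariant under this rescaling, because rank-$r$ covolumes are multiplied by $(\lambda/\mu)^r$.

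The bottleneck is rank $m+2$: the directions $\be_0$, one derivative direction, and all $m$ of the $T$-directions. For these your weights give only $g_0g_1g_{d+1}^m=(\lambda/\mu)^{m+2}$. Hence both your claim that every product of scalings is $\gg\mu$ and your conclusion $\lambda/\rho\asymp\mu$ fail in general.

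\textbf{A concrete counterexample.} Take $d=2$, $m=1$, $F(x_1,x_2)=(x_1,x_2,x_1^2)$, and $\delta=K=\theta$, $T=\theta^{-1}$ with $0<\theta<1$; these satisfy \eqref{condition on K T}.
\begin{itemize}
\item Here $\lambda=\theta^{1/2}$ and $\mu=\theta^{1/3}$.
\item The primitive sublattice $\{a_2=0\}$ is spanned by $\be_0,\be_1,\be_3$.
\item Since $\partial_2 f\equiv 0$, its image has covolume exactly $g_0g_1g_3=\lambda^3/(\delta KT)=\theta^{1/2}=\lambda$ for every $\bx$.
\end{itemize}
Condition (2) of Theorem~KM therefore forces $\rho\le\lambda$, and applying it at $\varepsilon=\lambda$ gives only a trivial bound. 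The statement, by contrast, asserts the bound $\theta^{\alpha/3}$.

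\textbf{The fix, as in Proposition~\ref{prop: crucial}.} Give up unimodularity and use the matrix $b$ above, so that $b_0b_1b_{d+1}^m=1$. Conditions \eqref{condition on K T} are then exactly $b_0\ge1$, $b_1\ge1$, $b_{d+1}\le1$. The lattice has covolume $b_1^{d-1}\ge1$, which Theorem~KM permits, and one applies it at $\varepsilon=\mu$.

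\textbf{The deferred case analysis is the substance of the proof.} Take a primitive collection $\bv_1,\dots,\bv_r$ with dual collection $\bu_1,\dots,\bu_{n+1-r}$ from Lemma~\ref{lem: dual prim}. The paper chooses the least $s$ for which $\bigwedge_{k\le r-2-s}\be_{j_k}\wedge\bu_1\wedge\dots\wedge\bu_{n+1-r}\neq\mathbf{0}$ for some indices $j_k>d$.
\begin{itemize}
\item If $s=0$, the relevant coordinate is a component of the skew gradient $\tilde\nabla(\tilde F\cdot\ba,\tilde F\cdot\bb)$. It is bounded below by Lemma~\ref{lem:crucial nabla}; note the needed bound concerns this skew gradient, not $\nabla F\ba^\top$.
\item If $s>0$, Lemma~\ref{lem: helpful-general} absorbs all of $\be_{d+1},\dots,\be_n$. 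Since $\partial_i\tilde F\equiv\be_i$ modulo $\Span(\be_{d+1},\dots,\be_n)$, the coordinate reduces to a single $|\tilde F\cdot\ba|$, bounded below by Lemma~\ref{lem: just for rank 1}.
\end{itemize}
In both cases the weight is $b_0b_1^{s+1}b_{d+1}^{r-2-s}$ with $r-2-s\le m$, hence at least $b_0b_1b_{d+1}^m=1$. This is what makes $\rho$ uniform in $\delta,K,T$.
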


\begin{remark}
Observe that, for \eqref{eq: crucial improvement} to be non-trivial, it is necessary that
$\delta K T^m<1$. Hence, in view of \eqref{condition on K T}, any meaningful application of Theorem~\ref{new thm: BKM} requires that $\delta<1$ and $K<1$. Furthermore, in any interesting application we must have $T\ge 1$; otherwise, in view of the form of $F$, the set $\Deltanew(\delta,K,T)$ is empty. 
\end{remark}

\begin{remark}
The BKM estimate \cite[Theorem 1.4]{BKM} implies that 
\begin{equation}\label{BKMimplication}
\mu_d(\Deltanew(\delta, K, T)\cap B)\ll\left(\delta K T^{n-1}\right)^{\alpha} \mu_{d}(B) 
\end{equation}
provided that $\delta>0$, $K>0$, and $\delta^{n+1}< \delta K T^{n-1}$.
Furthermore, the exponent $\alpha$ in the BKM estimate is given explicitly in terms of the degree of nondegeneracy of the manifold, as well as its dimension and codimension. The estimate \eqref{BKMimplication} was subsequently strengthened in \cite{BD}, where it was shown that the measure of interest is bounded by
\begin{equation}\label{BKM_BD}
\delta K^d T^m\,\mu_d(B)
+
\left(\delta\min\{K,r(B)^{-1}\}T^{n-1}\right)^\alpha,
\end{equation}
although in this case the exponent $\alpha$ is no longer explicit. It turns out that neither \eqref{BKMimplication} nor \eqref{BKM_BD} is useful for detecting rational points of denominator $\asymp e^t$ whose distance from the manifold is significantly smaller than $\ep e^{-t}$ for 
$\ep= e^{-3t/(2n-1)}$, let alone $\ep= e^{-t/m}$.
\end{remark}

\section{Preliminaries}

\subsection{Notation, definitions and further assumptions}\label{sec: notation}

Throughout, points $\bx\in\R^k$ will be written as $(x_1,\dots,x_k)$, and
$\bx\cdot\by=\sum_{i=1}^k x_i y_i$ will stand for the inner product of $\bx,\by\in\R^k$. We will use $\|\cdot\|$ and $\|\cdot\|_2$ to denote the  supremum and Euclidean norms on $\R^k$ respectively.

Given two quantities $X$ and $Y$, we write $X=\pm Y$ if $X=Y$ or $X=-Y$.

Following \cite{BY}, for any matrix
$A\in \mathrm{GL}_{n+1}(\R)$, we define its dual matrix by
\[
A^\star:=\sigma(A^\top)^{-1}\sigma,
\]
where $\sigma$ is the long Weyl element, namely
\[
\sigma=
\begin{bmatrix}
0 & \dots & 1 \\
\vdots & \iddots & \vdots \\
1 & \dots & 0
\end{bmatrix}
\in \mathrm{GL}_{n+1}(\R).
\]

Given a lattice $\Lambda\subset\R^{n+1}$, we denote by
$\lambda_i(\Lambda)$ the $i$th successive minimum of $\Lambda$
with respect to the unit ball in $\R^{n+1}$.

Let $F:\bU\to\R^n$ be a map defined on an open subset $\bU\subset\R^d$, $\bx_0\in\bU$. Recall from \cite{KM} that $F$ is called non-degenerate at $\bx_0$ if for some $l\in\N$, $F\in C^l$ in a neighbourhood of $\bx_0$ and 
\begin{equation}\label{nondegen}
\operatorname{rank}\{\partial_\beta F(\bx_0)~\vert~ 1\le|\beta|\le l\}=n.
\end{equation}
Here $\beta=(\beta_1,\dots,\beta_d)$ is a multi-index of non-negative integers, $|\beta|=\beta_1+\dots+\beta_d$ and $\partial_\beta F:=\partial^{|\beta|}F/\partial x_1^{\beta_1}\dots\partial x_d^{\beta_d}$ is the corresponding partial derivative. It is readily seen that if $F$ is additionally analytic then $F$ is nondegenerate at $\bx_0$ if and only if for any sufficiently small ball $B_0$ centred at $\bx_0$ the functions 
$$
1, F_1,\dots, F_n
$$ 
restricted to $B_0$ are linearly independent over $\R$. The nondegeneracy of a manifold $\cM\subset\R^n$ at any point $\by_0\in\cM$ is defined via the nondegeneracy of its parameterisation $F$ of a suitable neighbourhood of $\by_0$. Note that for any connected analytic manifold $\mathcal{M}\subset\R^n$, if it is nondegenerate at one point, then it is nondegenerate at every point. In this case we will say that the manifold is nondegenerate. A connected analytic manifold is nondegenerate if and only if it is not contained in a proper affine subspace of $\R^n$ \cite{KM}.

Since we are interested in obtaining a lower bound for the counting function, it suffices to restrict our attention to a subset of a given manifold. By the Implicit Function Theorem, we may therefore assume without loss of generality that $\cM=F(\bU)$ for some 
map $F:\bU \to \R^n$ of the form \eqref{eq: F} defined on an open subset $\bU \subset \R^d$.
For the rest of the paper, unless otherwise stated, $F$ will be assumed to be analytic.

It will be convenient to introduce the extended map
$\tilde F:\bU\to\R^{n+1}$ defined by
\begin{equation}\label{Ftilde}
\tilde F(\bx):=(1,F(\bx))=(1,\bx,f(\bx)).
\end{equation}

We also assume, without loss of generality, that there exists $M>0$ such that 
\begin{equation}\label{eq: bound on derivatives}
   \max_{k}\max_{i,j} \sup_{\bx\in \bU}\vert\partial_{ij}f_{k}(\bx)\vert\leq M.
\end{equation}

Next, with $\bU$ and $f$ as in \eqref{eq: F}, define 
\begin{equation}\label{def_u_1}
  u_1(\bx)=
      \begin{bmatrix}
    1 & 0 & \dots & 0 & -\partial_d f_m(\bx) & \dots &  -\partial_1 f_m(\bx) & f_{m}(\bx)- \sum_{i=1}^d x_i\partial_i f_m(\bx)\\
    \vdots & \vdots & \vdots & \vdots & \vdots & \vdots & \vdots & \vdots \\
    0 & 0 & \dots & 1 & -\partial_d f_1(\bx) & \dots &  -\partial_1 f_1(\bx) & f_{1}(\bx)- \sum_{i=1}^d x_i\partial_i f_1(\bx)\\
    0 & 0 & \dots & 0 & 1 & 0 & \dots & x_{d}\\
     \vdots & \vdots & \vdots & \vdots & \vdots & \vdots & \vdots & \vdots \\
     0 & 0 & \dots & 0 & 0 &  \dots & 1 &  x_{1}\\
     0 & 0 & \dots & 0 & 0 &  \dots & 0 &  1    
\end{bmatrix}.\end{equation}

A straightforward calculation \cite{BY} shows that the dual of $u_1$ is given by  \begin{equation}\label{eqn:dual u_1(x)}
        u_1^\star(\bx)=\begin{bmatrix}
            1 & -\bx & -f(\bx)\\
            0 & \mathrm{I}_d & J(\bx)\\
            0 & 0   & \mathrm{I}_m
        \end{bmatrix}\,,
    \end{equation} 
where 
$$
J(\bx)=\big[\partial_i\fff_j(\bx)\big]_{1\leq i\leq d,\, 1\leq j \leq m}\,, $$ 
is the Jacobian of the map $\bx\mapsto f(\bx)=(f_1(\bx),\dots,f_m(\bx))$.

Finally, following \cite{BKM}, we use the notion $\tilde\nabla$ for the skew gradient.
Recall that, given a pair of functions $g_1,g_2:\R^d\to\R$, the skew gradient
$\tilde\nabla(g_1,g_2):\R^d\to\R^d$ is defined by
\begin{equation}\label{SG}
\tilde\nabla(g_1,g_2)(\bx)
=
g_1(\bx)\nabla g_2(\bx)-g_2(\bx)\nabla g_1(\bx).
\end{equation}

\subsection{Axillary lemmas}

In this subsection, we gather together several well-known statements about lattices and exterior algebra.

We will use the standard (orthonormal) basis of $\bigwedge^r\R^{n+1}$ given by 
\begin{equation}\label{basis_wedge_k}
\be_{I}=\be_{i_1}\wedge\dots\wedge\be_{i_r},\qquad I=\{i_1<\dots<i_r\}\subset\{0,\dots,n\}.
\end{equation}
Here $\be_{0},\dots,\be_n$ is the standard basis of $\R^{n+1}$. Thus, any $\bv\in \bigwedge^r \R^{n+1}$ can be written in the basis \eqref{basis_wedge_k} as follows 
$$
\bv=\sum v_I \be_{I},
$$
where $v_{I}\in \R$. The supremum and Euclidean norms of $\bv$ are given by
$$
\|\bv\|=\sup|v_I|\quad\text{and}\quad \|\bv\|_2=\Big(\sum_I|v_I|^2\Big)^{1/2}
$$
respectively.

Now, we recall some widely known relations between linear subspaces of a finite dimensional Euclidean space $E$ and multi-vectors, see \cite[\S3.3]{Ber12} for a summary.
The vectors $\bv_1,\dots,\bv_r$ are linearly independent if and only if $\bv_1\wedge\dots\wedge\bv_r\neq\boldsymbol{0}$. Also, $\bv_1,\dots,\bv_r$ is a basis of a linear subspace $V\subset E$ of dimension $r$
if and only if
\begin{equation}\label{lin_sub_wedge}
V=\{\bv\in E~\vert~\bv\wedge\bv_1\wedge\dots\wedge\bv_r=\boldsymbol{0}\}.
\end{equation}
Furthermore, $\bv'_1,\dots,\bv'_r$ is any other basis of $V$ if and only if $\bv'_1\wedge\dots\wedge\bv'_r=c\bv_1\wedge\dots\wedge\bv_r$ for some non-zero $c\in\R$. 

Now we recall some definitions and statements about lattices. By \eqref{lin_sub_wedge},
for any linearly independent collection $\bv_1,\dots,\bv_r\in E$ and for any subset 
$\Lambda\subseteq E$
\begin{equation}\label{eq2.11}
\Span_\R(\bv_1,\dots,\bv_r)\cap\Lambda=\{\bv\in\Lambda~\vert~\bv\wedge\bv_1\wedge\dots\wedge\bv_r=\boldsymbol{0}\}.
\end{equation}
In the case $\Lambda$ is a lattice in $E$, \eqref{eq2.11} is clearly a discrete subgroup of $E$. Furthermore, if $\bv_1,\dots,\bv_r\in\Lambda$ are linearly independent, then the rank of \eqref{eq2.11} is equal to $r$.

The collection $\bv_1,\dots,\bv_r$ of vectors in a lattice $\Lambda$  is called {\em primitive}, see \cite[Definition~2 on page~21]{MR893813} or \cite{KM}, if $\bv_1,\dots,\bv_r$ is the $\Z$-basis of the discrete subgroup \eqref{eq2.11}, that is, the vectors
$\bv_1,\dots,\bv_r$ are linearly independent and
$$
\Span_\Z(\bv_1,\dots,\bv_r)=\Span_\R(\bv_1,\dots,\bv_r)\cap\Lambda.
$$

The following lemma gives two equivalent characterisations of primitive collections.

\begin{lemma}\label{primitive}
Let $\Lambda$ be a lattice in a $d$-dimensional Euclidean space $E$, $\vv b_1,\dots,\vv b_{d}$ be any basis of $\Lambda$ and $\bv_1,\dots,\bv_r\in\Lambda$. Then the following three statements are equivalent\/{\rm:}
\begin{itemize}
\item[{\rm(i)}] $\bv_1,\dots,\bv_r$ is a primitive collection of $\Lambda$\/{\em;}
\item[{\rm(ii)}] $\bv_1,\dots,\bv_r$ can be completed to a basis of $\Lambda$\/{\em;}
\item[{\rm(iii)}] the greatest common divisor of the $r\times r$ determinants 
$$
\det(v_{i,j_\ell})_{1\le i,\ell\le r}\qquad \{j_1<\dots<j_r\}\subset\{0,\dots,n\}
$$
is $1$, where $(v_{i,1},\dots,v_{i,d})$ are the coordinates of $\bv_i$ in the basis $\vv b_1,\dots,\vv b_d$.
\end{itemize}
\end{lemma}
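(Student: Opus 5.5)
We prove the equivalences by passing to coordinates, which turns the question into one about integer matrices. Fix the basis $\vv b_1,\dots,\vv b_d$ and let $\phi:\Lambda\to\Z^d$ be the coordinate isomorphism. It extends to a linear isomorphism $E\to\R^d$. This isomorphism preserves linear independence and $\R$-spans, and it carries $\Span_\R(\bv_1,\dots,\bv_r)\cap\Lambda$ onto $\Span_\R(\phi(\bv_i))\cap\Z^d$. So all three statements are invariant under $\phi$, and we may assume $\Lambda=\Z^d$. Let $A$ be the $r\times d$ integer matrix whose rows are the $\bv_i$. Statement (iii) then says that the $r\times r$ minors of $A$ have gcd $1$; the index range in (iii) should be read as $r$-subsets of the $d$ coordinate indices. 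In all three statements the $\bv_i$ are linearly independent: this is part of the definition in (i), it follows from being part of a basis in (ii), and it follows from some minor being nonzero in (iii).

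\textbf{(ii)$\Rightarrow$(i).} Complete $\bv_1,\dots,\bv_r$ to a basis $\bv_1,\dots,\bv_d$ of $\Z^d$. Take $\bv\in\Span_\R(\bv_1,\dots,\bv_r)\cap\Z^d$ and write $\bv=\sum_{i\le d} c_i\bv_i$ with $c_i\in\Z$. Uniqueness of coordinates in the real basis forces $c_i=0$ for $i>r$, so $\bv\in\Span_\Z(\bv_1,\dots,\bv_r)$.

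\textbf{(i)$\Rightarrow$(ii).} Let $L=\Span_\R(\bv_1,\dots,\bv_r)$ and consider the quotient $\Z^d/(L\cap\Z^d)$. It embeds into $\R^d/L$, so it is torsion-free; being finitely generated, it is free of rank $d-r$. Lift a basis of it to vectors $\bw_{r+1},\dots,\bw_d\in\Z^d$. By (i), $L\cap\Z^d=\Span_\Z(\bv_1,\dots,\bv_r)$, so the usual splitting of $0\to L\cap\Z^d\to\Z^d\to\Z^{d-r}\to0$ shows that $\bv_1,\dots,\bv_r,\bw_{r+1},\dots,\bw_d$ is a basis of $\Z^d$.

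\textbf{(ii)$\Rightarrow$(iii).} Complete $A$ to a unimodular $d\times d$ matrix, which has determinant $\pm1$. Laplace expansion along the first $r$ rows writes $\pm1$ as an integer combination of the $r\times r$ minors of $A$. Hence their gcd divides $1$.

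\textbf{(iii)$\Rightarrow$(i).} This is the step I expect to carry the real content. Take $\bv\in L\cap\Z^d$ and write $\bv=\sum c_i\bv_i$ with $c_i\in\R$; we must show each $c_i\in\Z$. Since the rows of $A$ are independent, we can solve for the $c_i$ by Cramer's rule on the $r\times r$ subsystem given by any column set $J$ with $\det A_J\neq0$. Replacing row $i$ of $A_J$ by the corresponding entries of $\bv$ gives $c_i\det A_J=\det A_J^{(i)}\in\Z$.

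The key identity is the wedge relation
\[
\bv_1\wedge\dots\wedge\bv_{i-1}\wedge\bv\wedge\bv_{i+1}\wedge\dots\wedge\bv_r=c_i\,\bv_1\wedge\dots\wedge\bv_r,
\]
which follows from multilinearity because the terms with $j\neq i$ contain a repeated vector and vanish. Reading off the coordinate in $\be_J$ gives $c_i\det A_J\in\Z$ for \emph{every} $r$-subset $J$ of the column indices, not only those with $\det A_J\neq0$. By (iii) there are integers $k_J$ with $\sum_J k_J\det A_J=1$. Multiplying through by $c_i$ gives $c_i=\sum_J k_J\,(c_i\det A_J)\in\Z$.

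Therefore $L\cap\Z^d=\Span_\Z(\bv_1,\dots,\bv_r)$, which is (i). Together with the other implications this proves the lemma.
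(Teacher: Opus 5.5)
Your proof is correct. The paper does not actually prove this lemma: it cites Gruber--Lekkerkerker for (i)$\Leftrightarrow$(ii) and Cassels for (ii)$\Leftrightarrow$(iii). You instead give a self-contained cycle (i)$\Rightarrow$(ii)$\Rightarrow$(iii)$\Rightarrow$(i) after reducing to $\Lambda=\Z^d$.

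The main structural difference is that you never prove (iii)$\Rightarrow$(ii) directly, which is the only implication that would require producing completing vectors from the coprimality of the minors. You close the loop with (iii)$\Rightarrow$(i) instead. The identity $\bv_1\wedge\dots\wedge\bv\wedge\dots\wedge\bv_r=c_i\,\bv_1\wedge\dots\wedge\bv_r$ gives $c_i\det A_J\in\Z$ for every $J$, and B\'ezout then gives $c_i\in\Z$. The completion itself you obtain abstractly from (i), via torsion-freeness of $\Z^d/(L\cap\Z^d)$ and the structure theorem for finitely generated abelian groups.

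Your route buys a short, elementary argument in exactly the multivector language the paper uses afterwards (Corollary~\ref{primitive2}, Lemma~\ref{lem: helpful-general}). The paper's citation buys brevity. Your Cramer's-rule remark is superfluous, since the wedge identity already covers all column sets $J$.

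You were also right to read the index range $\{0,\dots,n\}$ in (iii) as $r$-subsets of the $d$ coordinate indices; as written, that is a typo in the statement.
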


The equivalence of (i) and (ii) can be found in \cite[Theorem~5 and the comment prior to  Theorem~5 on page~21]{MR893813}. The equivalence of (ii) and (iii) can be found in \cite[Lemma~2 in Chapter~1 on page~15]{MR1434478}.

We now give several statements for the case when the lattice $\Lambda=\Z^{n+1}$ in the Euclidean space $E=\R^{n+1}$. First of all, we have the following simple characterisation of primitive collections in terms of the exterior product which follows from  Lemma~\ref{primitive}(iii).

\begin{corollary}\label{primitive2}
A collection $\bv_1,\dots,\bv_r\in\Z^{n+1}$ is primitive if and only if the greatest common divisor of the coordinates of $\bv_1\wedge\dots\wedge\bv_r$ in the standard basis \eqref{basis_wedge_k} is $1$. 
\end{corollary}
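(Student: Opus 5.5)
The statement to prove is Corollary~\ref{primitive2}: a collection $\bv_1,\dots,\bv_r\in\Z^{n+1}$ is primitive if and only if the coordinates of $\bv_1\wedge\dots\wedge\bv_r$ in the basis \eqref{basis_wedge_k} have greatest common divisor $1$. The plan is to apply Lemma~\ref{primitive} with $\Lambda=\Z^{n+1}$, $E=\R^{n+1}$, and the standard basis $\be_0,\dots,\be_n$ as the basis $\vv b_1,\dots,\vv b_{n+1}$ of $\Lambda$. With this choice, the coordinates of each $\bv_i$ in the lattice basis are exactly its usual integer coordinates $(v_{i,0},\dots,v_{i,n})$.

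The key step is to identify the coordinates of the wedge product with the minors in Lemma~\ref{primitive}(iii). Expanding multilinearly,
$$
\bv_1\wedge\dots\wedge\bv_r=\sum_{I=\{j_1<\dots<j_r\}}\det(v_{i,j_\ell})_{1\le i,\ell\le r}\,\be_{I}.
$$
To see this, write $\bv_i=\sum_j v_{i,j}\be_j$ and expand. Terms with a repeated index vanish, and for each fixed set $I$ the remaining terms are reordered into $\be_I$ with the sign of the permutation. Summing these signed terms gives the Leibniz formula for the $r\times r$ minor with columns $I$. Hence the coordinates of $\bv_1\wedge\dots\wedge\bv_r$ in the basis \eqref{basis_wedge_k} are precisely the $r\times r$ minors appearing in Lemma~\ref{primitive}(iii).

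The equivalence (i)$\Leftrightarrow$(iii) of Lemma~\ref{primitive} then gives the claim directly. One degenerate case needs a remark: if the $\bv_i$ are linearly dependent, then all minors vanish. The greatest common divisor of zeros is $0\neq1$, and such a collection is not primitive by definition, so the statement still holds.

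There is no genuine obstacle here. The only point requiring care is the sign and ordering bookkeeping in the wedge expansion, and this is routine.
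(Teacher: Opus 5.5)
Your proof is correct and follows the same route as the paper, which obtains the corollary directly from Lemma~\ref{primitive}(iii) with $\Lambda=\Z^{n+1}$ and the standard basis, using that the coordinates of $\bv_1\wedge\dots\wedge\bv_r$ in the basis \eqref{basis_wedge_k} are exactly the $r\times r$ minors. Your Leibniz-expansion justification and your remark on the linearly dependent case are routine details that the paper leaves implicit.
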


The following statement is a generalisation of \cite[Lemma 5.21]{BNY}.

\begin{lemma}\label{lem: dual prim}
Let $1\leq r\leq n$, and $\bv_1,\dots,\bv_r\in \Z^{n+1}$ be a linearly independent collection. Then, there exists a linearly independent collection $\bu_1,\dots, \bu_{n+1-r}\in \Z^{n+1}$ such that
\begin{equation}\label{dot}
    \bv_i\cdot \bu_{j}=0, 1\leq i\leq r, 1\leq j\leq n+1-r,
    \end{equation}
    and 
\begin{equation}\label{same norm}
        \Vert \bv_1\wedge\dots\wedge\bv_r\Vert_2=\Vert \bu_1\wedge\dots\wedge\bu_{n+1-r}\Vert_2.
    \end{equation}
\end{lemma}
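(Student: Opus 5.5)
The plan is to pass to primitive sublattices and then use the standard duality between a primitive sublattice of $\Z^{n+1}$ and its orthogonal complement.

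\emph{Step 1: reduce to the primitive case.} Let $V=\Span_\R(\bv_1,\dots,\bv_r)$ and $\Gamma=V\cap\Z^{n+1}$. By \eqref{eq2.11}, $\Gamma$ is a discrete subgroup of rank $r$. Choose a $\Z$-basis $\bw_1,\dots,\bw_r$ of $\Gamma$; this is a primitive collection. Since each $\bv_i\in\Gamma$, we have
\[
\bv_1\wedge\dots\wedge\bv_r=k\,\bw_1\wedge\dots\wedge\bw_r,\qquad k=\pm[\Gamma:\Span_\Z(\bv_1,\dots,\bv_r)]\in\Z_{\neq0}.
\]

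\emph{Step 2: the complementary lattice.} Let $W=V^\perp$, which has dimension $n+1-r$, and $\Gamma^\perp=W\cap\Z^{n+1}$. It is a standard fact that $\|\bw_1\wedge\dots\wedge\bw_r\|_2=\operatorname{covol}(\Gamma)=\operatorname{covol}(\Gamma^\perp)$ whenever $\Gamma$ is primitive in $\Z^{n+1}$. I would justify this directly. By Lemma~\ref{primitive}(ii), complete $\bw_1,\dots,\bw_r$ to a basis of $\Z^{n+1}$, giving a unimodular matrix $A$ whose first $r$ rows are the $\bw_i$. The last $n+1-r$ rows of $(A^{-1})^\top$ are integer vectors orthogonal to every $\bw_i$, and they form a $\Z$-basis of $\Gamma^\perp$. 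Indeed, any integer $\bu\perp V$ has integer coordinates $A\bu$ whose first $r$ entries vanish. The covolume equality then follows from the standard identity between complementary minors of $A$ and $(A^{-1})^\top$ (Jacobi's theorem) with $\det A=\pm1$. Equivalently, the Hodge star maps $\bw_1\wedge\dots\wedge\bw_r$ to $\pm$ a primitive integral $(n+1-r)$-vector with the same coordinates up to sign, hence the same Euclidean norm. That integral vector spans $\bigwedge^{n+1-r}W$, and since $\Gamma^\perp$ is primitive, Corollary~\ref{primitive2} identifies it with $\pm\bu'_1\wedge\dots\wedge\bu'_{n+1-r}$ for any $\Z$-basis $\bu'_j$ of $\Gamma^\perp$.

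\emph{Step 3: rescale.} Set $\bu_1=k\bu'_1$ and $\bu_j=\bu'_j$ for $j\ge2$. These vectors are integral and linearly independent, they satisfy \eqref{dot}, and
\[
\|\bu_1\wedge\dots\wedge\bu_{n+1-r}\|_2=|k|\,\|\bw_1\wedge\dots\wedge\bw_r\|_2=\|\bv_1\wedge\dots\wedge\bv_r\|_2,
\]
which is \eqref{same norm}.

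The main obstacle is the covolume equality in Step 2. The cleanest route is the Hodge-star observation: the star of a primitive integral decomposable $r$-vector is a primitive integral decomposable $(n+1-r)$-vector whose span is the orthogonal complement. Primitivity holds because the coordinate gcd is preserved under the star, and Corollary~\ref{primitive2} then converts primitivity of the multivector into primitivity of the lattice basis.
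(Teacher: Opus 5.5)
Your proposal is correct and follows the paper's proof. It uses the same reduction to a $\Z$-basis of $\Span_\R(\bv_1,\dots,\bv_r)\cap\Z^{n+1}$, the same appeal to the primitive case, and the same rescaling of a single vector by the integer $k$. The only difference is that you prove the primitive case yourself (via complementary minors of a unimodular completion, or the Hodge star) instead of citing \cite[Lemma 5.21]{BNY}; this is essentially the alternative the paper sketches in the remark following the lemma.
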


\begin{proof}
For primitive collections $\bv_1,\dots,\bv_r$ this lemma can be found in \cite[Lemma 5.21]{BNY} which in turn follows from  \cite[Lemma~5G]{MR1176315}. Now, fix any collection $\bv_1,\dots,\bv_r\in \Z^{n+1}$ of linearly independent vectors. The latter is equivalent to
$\bv_1\wedge\dots\wedge\bv_r\neq \mathbf{0}$. Let $\bv_1',\dots,\bv_r'$ be any $\Z$-basis of $\Span_\R(\bv_1,\dots,\bv_r)\cap\Z^{n+1}$. Then, $\bv_1',\dots,\bv_r'$ is a primitive collection and, by the primitive case of this lemma, there exists a linearly independent collection $\bu'_1,\dots, \bu'_{n+1-r}\in \Z^{n+1}$ such that \eqref{dot} and \eqref{same norm} hold for vectors $\bv'_i$ and $\bu'_j$. Since $\bv'_1,\dots,\bv'_r$ is also a basis of $\Span_\R(\bv_1,\dots,\bv_r)$, we have that $\bv_1\wedge\dots\wedge\bv_r=c\bv'_1\wedge\dots\wedge\bv'_r$ for some non-zero $c\in\R$. Since both multi-vectors have integer coordinates in the standard basis, $c\in\Q$, and since $\bv'_1,\dots,\bv'_r$ is primitive, by Corollary~\ref{primitive2}, the coordinates of $\bv'_1\wedge\dots\wedge\bv'_r$ are coprime integers. Hence, we must have that $c\in\Z$. Let $\bu_1=c\bu'_1$, $\bu_2=\bu'_2,\dots,\bu_{n+1-r}=\bu'_{n+1-r}$. Now, \eqref{dot} and \eqref{same norm} follow immediately.
\end{proof}

\begin{remark}
Equation \eqref{same norm} also holds for the supremum norm, since $\bu_1\wedge\dots\wedge\bu_{n+1-r}$ can be defined by applying the Hodge star operator to $\bv_1\wedge\dots\wedge\bv_r$. In this case, the coordinates of $\bu_1\wedge\dots\wedge\bu_{n+1-r}$ and  $\bv_1\wedge\dots\wedge\bv_r$ in the standard basis are the same up to permutation and sign changes, see \cite[\S3.2]{Ber12}. This way one can obtain an alternative proof of Lemma~\ref{lem: dual prim}.
\end{remark}

The next lemma can be explicitly found in \cite[Lemma 5.20]{BNY}.

\begin{lemma}\label{lem: dual basis consq}
    Let $1\leq r\leq n$, and $\bv_1,\dots,\bv_r\in \R^{n+1}$ and $\bu_1,\dots,\bu_{n+1-r}\in \R^{n+1}$ be linearly independent collections satisfying \eqref{dot} and \eqref{same norm}. Then for any $\bz_1,\dots,\bz_r\in \R^{n+1}$, 
    we get that 
$$
\vert (\bz_1\wedge\dots\wedge\bz_r)\cdot (\bv_1\wedge\dots\wedge\bv_r)\vert =\Vert \bz_1\wedge \dots\wedge \bz_r\wedge \bu_{1}\wedge\dots\wedge\bu_{n+1-r}\Vert.
$$
\end{lemma}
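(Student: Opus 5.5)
The plan is to reduce both sides to the same scalar, using the orthogonal decomposition of $\R^{n+1}$ determined by the two collections. Put $W=\Span_\R(\bv_1,\dots,\bv_r)$. By \eqref{dot}, each $\bu_j$ lies in $W^\perp$, which has dimension $n+1-r$. Since the $\bu_j$ are linearly independent, they form a basis of $W^\perp$. Note also that the right-hand side is the supremum norm of a multivector in $\bigwedge^{n+1}\R^{n+1}=\R\,\be_0\wedge\dots\wedge\be_n$. Hence it equals $|\det(\bz_1,\dots,\bz_r,\bu_1,\dots,\bu_{n+1-r})|$.

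\textbf{Step 1 (projection onto $W$).} Write $\bz_i=\bp_i+\bq_i$ with $\bp_i\in W$ and $\bq_i\in W^\perp$. Expand $\bz_1\wedge\dots\wedge\bz_r\wedge\bu_1\wedge\dots\wedge\bu_{n+1-r}$ multilinearly. Every term containing some $\bq_i$ vanishes, because $\bq_i\wedge\bu_1\wedge\dots\wedge\bu_{n+1-r}=\boldsymbol 0$ by \eqref{lin_sub_wedge} applied in $W^\perp$. So the right-hand side equals $\|\bp_1\wedge\dots\wedge\bp_r\wedge\bu_1\wedge\dots\wedge\bu_{n+1-r}\|$.

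For the left-hand side, use the standard identity $(\bz_1\wedge\dots\wedge\bz_r)\cdot(\bv_1\wedge\dots\wedge\bv_r)=\det(\bz_i\cdot\bv_j)$. Since $\bq_i\cdot\bv_j=0$, this equals $\det(\bp_i\cdot\bv_j)$, which is $(\bp_1\wedge\dots\wedge\bp_r)\cdot(\bv_1\wedge\dots\wedge\bv_r)$.

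\textbf{Step 2 (reduction to a scalar).} The vectors $\bp_1,\dots,\bp_r$ lie in the $r$-dimensional space $W$. Therefore $\bp_1\wedge\dots\wedge\bp_r=c\,\bv_1\wedge\dots\wedge\bv_r$ for some $c\in\R$, with $c=0$ allowed when the $\bp_i$ are dependent. The left-hand side then becomes $|c|\,\|\bv_1\wedge\dots\wedge\bv_r\|_2^2$. The right-hand side becomes $|c|\,|\det(\bv_1,\dots,\bv_r,\bu_1,\dots,\bu_{n+1-r})|$.

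\textbf{Step 3 (the determinant).} Let $A$ be the square matrix with columns $\bv_1,\dots,\bv_r,\bu_1,\dots,\bu_{n+1-r}$. Its Gram matrix $A^\top A$ is block diagonal by \eqref{dot}. Hence
$$
\det(A)^2=\det(\bv_i\cdot\bv_j)\det(\bu_k\cdot\bu_l)=\|\bv_1\wedge\dots\wedge\bv_r\|_2^2\,\|\bu_1\wedge\dots\wedge\bu_{n+1-r}\|_2^2 .
$$
By \eqref{same norm}, this gives $|\det A|=\|\bv_1\wedge\dots\wedge\bv_r\|_2^2$. Therefore both sides equal $|c|\,\|\bv_1\wedge\dots\wedge\bv_r\|_2^2$, which proves the lemma.

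None of the steps is a real obstacle. The only points needing care are the two standard identities: the Cauchy--Binet form of the inner product on $\bigwedge^r$, and the Gram-determinant formula for $\|\cdot\|_2$ of a wedge. We should also check that the absolute values absorb the sign ambiguity in $c$ and in $\det A$.
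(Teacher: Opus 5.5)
Your proof is correct and complete. The projection step, the Cauchy--Binet identity $(\bz_1\wedge\dots\wedge\bz_r)\cdot(\bv_1\wedge\dots\wedge\bv_r)=\det(\bz_i\cdot\bv_j)$, and the block-diagonal Gram computation $|\det A|=\|\bv_1\wedge\dots\wedge\bv_r\|_2\,\|\bu_1\wedge\dots\wedge\bu_{n+1-r}\|_2$ reduce both sides to $|c|\,\|\bv_1\wedge\dots\wedge\bv_r\|_2^2$. Each hypothesis (independence of both families, \eqref{dot}, \eqref{same norm}) is used where it is needed.

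The paper does not prove this lemma. It imports it from \cite[Lemma~5.20]{BNY}, and its only hint at a mechanism is the remark after Lemma~\ref{lem: dual prim} about the Hodge star. In that picture, put $W=\Span_\R(\bv_1,\dots,\bv_r)$. Then \eqref{dot} and \eqref{same norm} say that $\bu_1\wedge\dots\wedge\bu_{n+1-r}=\pm\star(\bv_1\wedge\dots\wedge\bv_r)$, because both are decomposable $(n+1-r)$-vectors representing $W^\perp$ with the same Euclidean norm. The lemma is then the defining identity $\alpha\wedge\star\beta=(\alpha\cdot\beta)\,\be_0\wedge\dots\wedge\be_n$ with $\alpha=\bz_1\wedge\dots\wedge\bz_r$ and $\beta=\bv_1\wedge\dots\wedge\bv_r$.

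The Hodge-star route is a one-liner once that machinery is available. It also explains the supremum-norm version of \eqref{same norm} noted in the remark. Your route needs nothing beyond Cauchy--Binet and shows exactly where orthogonality and the norm condition enter. Your Step 3 is essentially the computation that fixes the scalar relating $\bu_1\wedge\dots\wedge\bu_{n+1-r}$ to $\star(\bv_1\wedge\dots\wedge\bv_r)$, so the two arguments carry the same content, organised differently.
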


Recall that for any $\bu_1,\dots,\bu_{n+1}\in\R^{n+1}$, the multi-vector $\bu_1\wedge\dots\wedge\bu_{n+1}$ has only one coordinate in the standard basis which consists of only one multi-vector $\be_0\wedge\dots\wedge\be_n$. This coordinate is equal to the  determinant of the matrix composed from the rows of the coordinates of $\bu_1,\dots,\bu_{n+1}$ in the standard basis. Hence, by Lemma~\ref{primitive}, $|\bu_1\wedge\dots\wedge\bu_{n+1}|=1$ if and only if the collection $\bu_1,\dots,\bu_{n+1}\in\Z^{n+1}$ is  primitive (equivalently, a basis of $\Z^{n+1}$).

The final statement of this subsection is an extension of Property~(ii) of Lemma~\ref{primitive} to primitive collections within discrete subgroups.

\begin{lemma}\label{lem: helpful-general}
Let $\bu_1,\dots,\bu_{k+1}\in \Z^{n+1}$ such that 
\begin{equation}\label{indepen}
\bu_1\wedge\dots\wedge\bu_{k+1}\neq\mathbf{0}.
\end{equation}
Let $\ell$ be the greatest common divisor  of the coordinates of 
$\bu_1\wedge\dots\wedge\bu_{k+1}$ in the standard basis \eqref{basis_wedge_k}. Let $\bz_1,\dots,\bz_r\in \Z^{n+1}$ be a primitive collection  such that $1\le r\le k+1$ and
\begin{equation}\label{cond2.14}
\bz_{i}\wedge \bu_1\wedge\dots\wedge\bu_{k+1}=\mathbf{0} \qquad (1\leq i\leq r).
\end{equation}
Then there exist $\bu_1',\dots,\bu_{k+1-r}'\in \Z^{n+1}$ such that $\bz_1,\dots,\bz_r,\bu'_1,\dots,\bu'_{k+1-r}$ is a primitive collection satisfying
\begin{equation}\label{cond2.15}
\bu_1\wedge\dots\wedge\bu_{k+1}=\pm\ell\,\bz_1\wedge\dots\wedge\bz_{r}\wedge\bu_1'\wedge\dots\wedge\bu_{k+1-r}'.
\end{equation}
\end{lemma}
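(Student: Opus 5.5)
Let $W=\Span_\R(\bu_1,\dots,\bu_{k+1})$, which has dimension $k+1$ by \eqref{indepen}, and let $\Gamma=W\cap\Z^{n+1}$ be the discrete subgroup \eqref{eq2.11}. It has rank $k+1$. The plan is to pick a $\Z$-basis of $\Gamma$ that starts with $\bz_1,\dots,\bz_r$, and then compare the wedge of this basis with $\bu_1\wedge\dots\wedge\bu_{k+1}$.

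First, by \eqref{cond2.14} and \eqref{lin_sub_wedge}, each $\bz_i$ lies in $W$, so $\bz_i\in\Gamma$. Next, the collection $\bz_1,\dots,\bz_r$ is primitive in $\Z^{n+1}$, so
\[
\Span_\Z(\bz_1,\dots,\bz_r)=\Span_\R(\bz_1,\dots,\bz_r)\cap\Z^{n+1}=\Span_\R(\bz_1,\dots,\bz_r)\cap\Gamma,
\]
where the second equality uses $\Span_\R(\bz_i)\subset W$. Hence $\bz_1,\dots,\bz_r$ is also a primitive collection of the lattice $\Gamma$ inside the $(k+1)$-dimensional Euclidean space $W$. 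By Lemma~\ref{primitive}, (i)$\Rightarrow$(ii), applied to $\Lambda=\Gamma$ and $E=W$, it can be completed to a basis $\bz_1,\dots,\bz_r,\bu'_1,\dots,\bu'_{k+1-r}$ of $\Gamma$. This collection is a $\Z$-basis of $W\cap\Z^{n+1}$, so by definition it is a primitive collection in $\Z^{n+1}$.

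For the identity \eqref{cond2.15}, set $\bw=\bz_1\wedge\dots\wedge\bz_r\wedge\bu'_1\wedge\dots\wedge\bu'_{k+1-r}$.
\begin{itemize}
\item Since $\bw$ is the wedge of a primitive collection, Corollary~\ref{primitive2} shows its coordinates are coprime integers.
\item Both $\bu_1,\dots,\bu_{k+1}$ and the new collection are bases of $W$, so $\bu_1\wedge\dots\wedge\bu_{k+1}=c\,\bw$ with $c\neq0$.
\item The $\bu_j$ lie in $\Gamma$, so their coordinates in the basis above are integers. Hence $c$ is the determinant of an integer matrix, and $c\in\Z$.
\item The gcd of the coordinates of $c\,\bw$ is therefore $|c|\cdot1$, so $|c|=\ell$ and $c=\pm\ell$.
\end{itemize}

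I expect the main obstacle to be the small point in the second paragraph: primitivity in $\Z^{n+1}$ must transfer to primitivity in $\Gamma$ so that Lemma~\ref{primitive} applies in $W$ rather than in $\R^{n+1}$. This reduces to the set-theoretic identity displayed above, which follows directly from $\Span_\R(\bz_1,\dots,\bz_r)\subset W$.
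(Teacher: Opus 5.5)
Your proposal is correct and follows the paper's proof essentially step by step: the same sublattice $W\cap\Z^{n+1}$, the same transfer of primitivity of $\bz_1,\dots,\bz_r$ from $\Z^{n+1}$ to that sublattice, the same completion via Lemma~\ref{primitive}(ii), and the same comparison of the two wedge products. The only cosmetic difference is how you get $c\in\Z$: you read it off as the determinant of the integer change-of-basis matrix, whereas the paper first notes $c\in\Q$ and then uses the coprimality of the coordinates of $\bw$ (Corollary~\ref{primitive2}) to conclude $c=\pm\ell$.
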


\begin{proof}
Let $E=\Span_\R(\bu_1,\dots,\bu_{k+1})$ and  
$\Lambda=E\cap\Z^{n+1}$. By \eqref{indepen}, $\dim E=k+1$. By the fact that $\bu_1,\dots,\bu_{k+1}\in \Z^{n+1}$, we also have that $\operatorname{rank}(\Lambda)=k+1$. Thus, $\Lambda$ is a lattice in $E$. 

Since $\bz_i\in\Z^{n+1}$, by \eqref{eq2.11} and  \eqref{cond2.14}, $\bz_i\in\Lambda$ for each $i$. Since $\bz_1,\dots,\bz_r$ is a primitive collection of $\Z^{n+1}$ in $\R^{n+1}$ and $\Span_\R(\bz_1,\dots,\bz_r)\subset E$, we have that
\begin{align*}
\Span_\Z(\bz_1,\dots,\bz_r)&=\Span_\R(\bz_1,\dots,\bz_r)\cap\Z^{n+1}\\
&=\Span_\R(\bz_1,\dots,\bz_r)\cap E\cap\Z^{n+1}\\
&=\Span_\R(\bz_1,\dots,\bz_r)\cap\Lambda.
\end{align*}
Hence, $\bz_1,\dots,\bz_r$ is also a primitive collection of the lattice $\Lambda$ in $E$.

By Lemma~\ref{primitive}(ii), the collection $\bz_1,\dots,\bz_r$ can be completed to a $\Z$-basis of $\Lambda$ by some vectors $\bu_1',\dots,\bu_{k+1-r}'\in\Lambda\subset\Z^{n+1}$. It follows that
\begin{align*}
\Span_\Z(\bz_1,\dots,\bz_{r},\bu_1',\dots,\bu_{k+1-r}')&=\Span_\R(\bz_1,\dots,\bz_{r},\bu_1',\dots,\bu_{k+1-r}')\cap\Lambda\\
&=\Span_\R(\bz_1,\dots,\bz_{r},\bu_1',\dots,\bu_{k+1-r}')\cap\Z^{n+1}
\end{align*}
since $\bz_1,\dots,\bz_{r},\bu_1',\dots,\bu_{k+1-r}'$ is also a basis of $E$. Hence, $\bz_1,\dots,\bz_{r},\bu_1',\dots,\bu_{k+1-r}'$ is a primitive collection of $\Z^{n+1}$.

Since $\bu_1,\dots,\bu_{k+1}$ and $\bz_1,\dots,\bz_{r},\bu_1',\dots,\bu_{k+1-r}'$ span the same linear space, we have that
$$
\bu_1\wedge\dots\wedge\bu_{k+1}=c\,\bz_1\wedge\dots\wedge\bz_{r}\wedge\bu_1'\wedge\dots\wedge\bu_{k+1-r}'
$$
for some non-zero $c\in\R$. Since these two multi-vectors have integer coordinates, $c\in\Q$. Since $\bz_1,\dots,\bz_{r},\bu_1',\dots,\bu_{k+1-r}'$ is  primitive, by Corollary~\ref{primitive2}, $c=\pm \ell$, where $\ell$ is the greatest common divisor  of the coordinates of 
$\bu_1\wedge\dots\wedge\bu_{k+1}$ in the standard basis \eqref{basis_wedge_k}. Equation~\eqref{cond2.15} now readily follows.
\end{proof}

\subsection{A result of Kleinbock and Margulis} \label{sec: KM}

We will deduce Theorem~\ref{new thm: BKM} from the following fundamental result of Kleinbock and Margulis \cite[Theorem 5.2]{KM}.

\begin{theoremKM}
Let $d,k\in \N, C,\alpha>0$, $0< \rho\leq 1/k$, and a ball $B_0\subset \R^d$ be given. Let $h:3^kB_0\to \mathrm{GL}_{k}(\R)$ be a continuous map. Suppose that for every integer $1\le r\le k$, for any primitive collection $\bv_1,\dots,\bv_r\in \Z^k$ of linearly independent integer vectors the following two conditions are satisfied:
    \begin{enumerate}
        \item $\bx\mapsto \Vert h(\bx)\bv_1\wedge \dots\wedge h(\bx) \bv_r\Vert$ is $(C,\alpha)$-good on $3^k B_0;$
        \item \label{thm KM}$\sup_{\bx\in B_0}\Vert h(\bx)\bv_1\wedge \dots\wedge h(\bx) \bv_r\Vert\geq \rho.$
    \end{enumerate}
Then for every $\varepsilon\leq \rho$,
$$\mu_{d}\{\bx\in B_0~|~\lambda_1(h(\bx)\Z^k)\leq \varepsilon\}\ll_{k,d} C \left(\frac{\varepsilon}{\rho}\right)^{\alpha}\mu_{d}(B_0),$$
where the implied constant depends on $k$ and $d$ only.
\end{theoremKM}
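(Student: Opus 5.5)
The statement above is the quantitative nondivergence theorem of Kleinbock and Margulis, and the plan is to follow their original strategy: a combinatorial induction over flags of primitive subgroups of $\Z^k$, combined with the Besicovitch covering theorem and the defining property of $(C,\alpha)$-good functions. For a primitive subgroup $\Delta\subset\Z^k$ with $\Z$-basis $\bv_1,\dots,\bv_r$ we set
\[
\psi_\Delta(\bx):=\Vert h(\bx)\bv_1\wedge\dots\wedge h(\bx)\bv_r\Vert ,
\]
which up to sign does not depend on the choice of basis. Conditions (1) and (2) say that each $\psi_\Delta$ is $(C,\alpha)$-good on $3^kB_0$ and satisfies $\sup_{B_0}\psi_\Delta\ge\rho$.

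The basic one-function estimate comes first. If $\psi$ is $(C,\alpha)$-good on a ball $3B$ and $\sup_B|\psi|\ge\rho'$, then
\[
\mu_d\{\bx\in B:|\psi(\bx)|<\varepsilon\}\le C(\varepsilon/\rho')^\alpha\mu_d(B),
\]
directly from the definition of goodness.

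I would then prove, by induction on $m\le k$, a stronger statement. Suppose $\mathcal{P}$ is a set of primitive subgroups, closed under sums and intersections, whose chains have length at most $m$, and suppose every $\Delta\in\mathcal{P}$ satisfies (1) and (2) on $3^mB_0$. Then the set of $\bx\in B_0$ at which some flag in $\mathcal{P}$ contains a vector of norm $<\varepsilon$ while no $\psi_\Delta(\bx)$ is smaller than $\rho$ in the right sense has measure $\ll C(\varepsilon/\rho)^\alpha\mu_d(B_0)$. Applied to the poset of all primitive subgroups of $\Z^k$, which has length $k$, this gives the theorem, since $\lambda_1(h(\bx)\Z^k)\le\varepsilon$ means some rank-one $\Delta$ has $\psi_\Delta(\bx)\le\varepsilon$.

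The inductive step runs as follows. For each point $\bx$ of the bad set, let $\Delta_\bx$ be a maximal element of $\mathcal{P}$ with $\psi_{\Delta_\bx}(\bx)<\rho$; nonemptiness follows from the bad condition. Choose the largest ball $B(\bx,r_\bx)$ on which $\sup\psi_{\Delta_\bx}=\rho$; this is possible by continuity and by condition (2), and the ball stays inside $3B_0$. By Besicovitch, extract a subcover with multiplicity bounded by a constant depending only on $d$. On each ball $B(\bx,r_\bx)$, split the bad set into two parts. The first is the set where $\psi_{\Delta_\bx}<\varepsilon$, which is controlled by the one-function estimate. The second is handled by the induction hypothesis applied to the poset of subgroups strictly containing $\Delta_\bx$ (or strictly contained in it), working in the quotient lattice $h(\bx)\Z^k/h(\bx)\Delta_\bx$, which has strictly shorter chains. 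The maximality of $\Delta_\bx$, together with the choice of radius, ensures that the hypothesis $\sup\ge\rho$ holds there; the submultiplicativity $\psi_{\Delta\oplus\Delta'}\le\psi_\Delta\psi_{\Delta'}$ and the bound $\rho\le1/k$ are what make the constants compatible. Summing over the cover and using bounded multiplicity gives $\ll_{k,d}C(\varepsilon/\rho)^\alpha\mu_d(B_0)$.

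The main obstacle is the bookkeeping in the inductive step. One must show that the balls $B(\bx,r_\bx)$ satisfy the sup hypothesis for every element of the restricted poset, and that $3^{m-1}B(\bx,r_\bx)\subset3^mB_0$, so that goodness is available at every level. I would isolate this as a separate lemma on ``marked flags'', exactly as in the Kleinbock--Margulis framework, and verify the dilation factors $3^k$ carefully there.
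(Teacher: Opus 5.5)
The paper does not prove Theorem~KM; it quotes it from Kleinbock--Margulis \cite[Theorem~5.2]{KM}. Your outline has the architecture of their argument: induction on the length of a poset of primitive subgroups, Besicovitch covering, and the one-function goodness bound. The one-function bound and the dilation bookkeeping are fine. As written, however, the inductive step fails at the choice of $\Delta_{\bx}$.

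Taking $\Delta_{\bx}$ maximal \emph{under inclusion} among $\{\Delta:\psi_\Delta(\bx)<\rho\}$ only gives $\psi_\Gamma(\bx)\ge\rho$ for $\Gamma\supsetneq\Delta_{\bx}$. A subgroup $\Gamma\subsetneq\Delta_{\bx}$ with $\psi_\Gamma(\bx)<\rho$ may stay below $\rho$ on a ball larger than $B(\bx,r_{\bx})$. Then hypothesis~(2) fails for $\Gamma$ on $B(\bx,r_{\bx})$, and the goodness estimates at deeper levels become useless.

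You cannot discard the subgroups below $\Delta_{\bx}$. A flag member of rank at least $2$ can have covolume in $[\varepsilon,\rho]$ and still contain a very short vector. Passing to the quotient $h(\bx)\Z^k/h(\bx)\Delta_{\bx}$ is not available either: the lattice moves with the point of the ball, and the normalised covolumes $\psi_\Gamma/\psi_{\Delta_{\bx}}$ are not known to be $(C,\alpha)$-good.

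The correct choice, which is KM's, is by radius. For each of the finitely many $\Delta$ with $\psi_\Delta(\bx)<\rho$ (finite by discreteness), let $r_{\Delta,\bx}=\sup\{r:\sup_{B(\bx,r)}\psi_\Delta<\rho\}$. By~(2) this is at most twice the radius of $B_0$. Take $\Delta_{\bx}$ \emph{maximising} $r_{\Delta,\bx}$, and put $B_{\bx}=B(\bx,r_{\Delta_{\bx},\bx})$. Then $\sup_{B_{\bx}}\psi_\Gamma\ge\rho$ for every $\Gamma$, and $\psi_{\Delta_{\bx}}\le\rho$ on $B_{\bx}$. Now apply the induction, with the same functions and the same $\rho$, to the poset of all $\Gamma\neq\Delta_{\bx}$ comparable with $\Delta_{\bx}$, both above and below. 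This poset has length at most $m-1$.

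The inductive statement also has to be made precise, and the lattice geometry belongs at the end, not in the step. For a poset of length $m$, the claim is that the set of $\by\in B$ admitting no \emph{$\varepsilon$-marked flag} has measure at most $m(3^dN_d)^mC(\varepsilon/\rho)^\alpha\mu_d(B)$. Here an $\varepsilon$-marked flag is a chain $\mathfrak F$ with:
\begin{itemize}
\item $\varepsilon\le\psi_\Delta(\by)\le\rho$ for every $\Delta\in\mathfrak F$;
\item $\psi_\Gamma(\by)\ge\rho$ for every $\Gamma\notin\mathfrak F$ comparable with all of $\mathfrak F$.
\end{itemize}
In the step, suppose $\by\in B_{\bx}$ has a marked flag $\mathfrak F'$ for the restricted poset and $\psi_{\Delta_{\bx}}(\by)\ge\varepsilon$. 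Then $\mathfrak F'\cup\{\Delta_{\bx}\}$ is marked; the needed upper bound $\psi_{\Delta_{\bx}}(\by)\le\rho$ is exactly what the choice of radius provides. No submultiplicativity is used here.

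It is used in the final deduction, where you need the converse of the implication you cite: a marked flag at $\by$ forces $\lambda_1(h(\by)\Z^k)\ge\varepsilon$. For primitive $\bv$, set $\Delta_0=\{0\}$, take $i$ maximal with $\bv\notin\Delta_i$, and let $\Gamma$ be the saturation of $\Delta_i+\Z\bv$. Then $\Gamma$ is comparable with the whole flag, so $\psi_\Gamma(\by)\ge\varepsilon$. On the other hand, $\psi_\Gamma(\by)\le\psi_{\Delta_i}(\by)\Vert h(\by)\bv\Vert\le\rho\Vert h(\by)\bv\Vert$ when $i\ge1$. This gives $\Vert h(\by)\bv\Vert\ge\varepsilon/\rho\ge\varepsilon$; with Euclidean norms this uses only $\rho\le1$. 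Finally, primitive subgroups are not closed under sums, but no such closure is needed anywhere.
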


The definition of $(C,\alpha)$-good functions can be found in \cite{KM} and numerous subsequent papers, such as \cite{Kleinbock-exponent}. We recall the following result for later use.

\begin{lemma}[{\cite[Corollary 3.3]{Kleinbock-extremal}}]\label{lem: good func}
    Let $\bU\subset \R^d$ be a connected open set, $g=(g_i)_{i=1}^k:\bU\to\R^k$. Suppose that each $g_i$ is analytic. Then, for every $\bx_0\in \bU$ there exists a ball $B(\bx_0)\subset \bU$ and constants $C>0$ and $\alpha>0$ such that any linear combination of $1, g_1,\dots, g_k$ is $(C,\alpha)$ good at $B(\bx_0)$.
\end{lemma}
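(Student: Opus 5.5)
The statement is local and depends only on the linear span $\mathcal{L}=\Span_\R(1,g_1,\dots,g_k)$. My plan is to reduce it to a uniform lower bound on derivatives and then invoke the standard criterion of Kleinbock--Margulis \cite[Proposition 3.4]{KM}: a $C^l$ function whose partial derivatives of order at most $l$ are bounded by $A$ on a ball, and which on that ball has some partial derivative of order at most $l$ of modulus at least $a>0$ at every point (with $a,A$ controlling the constants), is $(C,\alpha)$-good there. The exponent $\alpha$ depends only on $d$ and $l$, for instance $\alpha=1/(dl)$.

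First I normalise. The $(C,\alpha)$-good property is invariant under multiplying a function by a nonzero constant. I then choose functions $h_0,\dots,h_s$ forming a basis of $\mathcal{L}$ restricted to $\bU$. Since $\bU$ is connected and the functions are analytic, linear independence on $\bU$ is equivalent to linear independence on every ball contained in $\bU$, by the identity theorem. Every linear combination of $1,g_1,\dots,g_k$ is then of the form $\bc\cdot(h_0,\dots,h_s)$ with $\bc$ in the unit sphere $S^s$, or is identically zero, which is trivially good. So it suffices to prove goodness, with uniform $C$ and $\alpha$, for $\phi_\bc:=\bc\cdot h$ with $\bc\in S^s$.

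Next comes nondegeneracy. Linear independence of the analytic functions $h_0,\dots,h_s$ near $\bx_0$ implies that for some $l$ the vectors $\partial_\beta h(\bx_0)$, $0\le|\beta|\le l$, span $\R^{s+1}$. Indeed, otherwise there is a nonzero $\bc$ orthogonal to all Taylor coefficients of $h$ at $\bx_0$, forcing $\bc\cdot h\equiv0$ near $\bx_0$. Hence the continuous function
\[
(\bc,\bx)\mapsto\max_{|\beta|\le l}|\partial_\beta\phi_\bc(\bx)|
\]
is positive on $S^s\times\{\bx_0\}$. By compactness of $S^s$ and continuity in $\bx$, it is bounded below by some $a>0$ on $S^s\times B(\bx_0)$ for a small enough ball $B(\bx_0)$. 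Shrinking the ball so that its closure lies in $\bU$ also gives a uniform upper bound $A$ on all derivatives of order at most $l$.

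The last step is applying \cite[Proposition 3.4]{KM} with these uniform $a$ and $A$. This gives $(C,\alpha)$-goodness of every $\phi_\bc$ on a possibly smaller concentric ball, with $C$ and $\alpha$ depending only on $d$, $l$, $a$, $A$, and hence independent of $\bc$.

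The main obstacle, if one has to prove that proposition rather than cite it, is the passage from derivative bounds to goodness. In one variable, $|f^{(j)}|\ge a$ on an interval gives the $(C,1/j)$-good property via Lagrange interpolation, applied at $j+1$ points in the set where $|f|<\varepsilon$. In several variables, one must choose the multi-index $\beta$ realising the lower bound locally, which is not a single order $l$ but some order at most $l$. One then passes to a direction $\bv$ in which the directional derivative of order $|\beta|$ is bounded below, using a polarisation argument, and applies Fubini along lines parallel to $\bv$. This requires a covering of the ball by subballs on which the chosen derivative stays large, which is where the shrinking of the ball and the loss in the exponent $\alpha$ occur.
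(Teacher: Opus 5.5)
The paper gives no proof of this lemma; it simply quotes \cite[Corollary 3.3]{Kleinbock-extremal}. Your argument is the standard derivation of that corollary: take a basis of $\Span_\R(1,g_1,\dots,g_k)$, use the identity theorem and the Taylor expansion at $\bx_0$ to get nondegeneracy, then apply \cite[Proposition 3.4]{KM}. It is correct in substance, but you misstate the result you invoke.

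\cite[Proposition 3.4]{KM} is not a single-function criterion with constants depending on $a$ and $A$. It says that if $(f_1,\dots,f_n)$ is $C^l$ near $\bx_0$ and its partial derivatives of orders $1$ to $l$ at $\bx_0$ span $\R^n$, then there are a neighbourhood $V\ni\bx_0$ and $C>0$ such that every linear combination of $1,f_1,\dots,f_n$ is $(C,1/(dl))$-good on $V$. If you take $h_0=1$, your spanning statement for $\partial_\beta h(\bx_0)$, $0\le|\beta|\le l$, is equivalent to this hypothesis for $(h_1,\dots,h_s)$. The lemma then follows at once, and your compactness step is redundant, since it is part of KM's own proof.

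The criterion in the form you state it (bounds only on derivatives of order at most $l$, with the large multi-index allowed to vary from point to point) is not what KM prove. Your sketch of its proof also needs more than $a$ and $A$. First, a uniform radius on which one fixed derivative stays bounded below requires a modulus of continuity for the order-$l$ derivatives. Second, goodness on the subballs must then be patched together, using that a large derivative forces $\sup|\phi_{\mathbf c}|$ to be bounded below on each subball. Both are available here, because analyticity bounds the order-$(l+1)$ derivatives uniformly near $\bx_0$.

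KM's route is cleaner, though. Use the lower bound at $\bx_0$ only. Then each $\phi_{\mathbf c}$ has one fixed large derivative on a whole neighbourhood of uniform size, and no patching is needed.
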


Lemma~\ref{lem: good func} will essentially verify condition~(1) of Theorem~KM in our applications. However, verifying condition~(2) requires considerably more work. To accomplish this task, we provide three auxiliary lemmas. We begin with the following well-known result, which can be found, for example, in \cite[Equation~(7.5)]{BKM}. It easily follows from the linear independence of the coordinate functions of $\tilde F$, defined in \eqref{Ftilde}.

\begin{lemma}\label{lem: just for rank 1} Let $F:\bU\to\R^n$ be nondegenerate at $\bx_0\in\bU$, and $\tilde F$ be as in \eqref{Ftilde}. Then there exists a neighborhood $V_1\ni\bx_0$ such that for any ball $B\subset V_1$, there exists $\rho_1>0$ such that for any $\ba\in\R^{n+1}$ with $\Vert \ba\Vert\ge1$, \begin{equation}\label{eq: rk 1} \sup_{\bx\in B}| \tilde F(\bx)\cdot\ba|\geq \rho_1. \end{equation} 
\end{lemma}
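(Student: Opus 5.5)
We argue by contradiction combined with compactness, using that the coordinate functions $1,F_1,\dots,F_n$ of $\tilde F$ are linearly independent on every ball near $\bx_0$. Since $F$ is analytic and nondegenerate at $\bx_0$, the discussion in \S\ref{sec: notation} gives a ball $V_1$ centred at $\bx_0$ on which $1,F_1,\dots,F_n$ are linearly independent over $\R$. By analyticity (the identity theorem), they remain linearly independent when restricted to any ball $B\subset V_1$: a linear relation on $B$ would propagate to the connected set $V_1$. Thus for a fixed ball $B\subset V_1$ and every nonzero $\ba\in\R^{n+1}$, the function $\bx\mapsto\tilde F(\bx)\cdot\ba$ does not vanish identically on $B$. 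Consequently
\[
\phi(\ba):=\sup_{\bx\in B}|\tilde F(\bx)\cdot\ba|>0 .
\]

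Next we use homogeneity and compactness. The map $\phi$ is a seminorm on $\R^{n+1}$; it is finite after shrinking $V_1$ so that $\tilde F$ is bounded on it. By the previous paragraph, $\phi$ is in fact a norm, being positive on nonzero vectors. Any norm on a finite-dimensional space is continuous, and its minimum over the compact unit sphere $\{\|\ba\|=1\}$ is attained. We define $\rho_1:=\min_{\|\ba\|=1}\phi(\ba)>0$.

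Finally, for $\|\ba\|\ge1$ we have
\[
\phi(\ba)=\|\ba\|\,\phi\bigl(\ba/\|\ba\|\bigr)\ge\rho_1,
\]
which is exactly \eqref{eq: rk 1}. The constant $\rho_1$ depends on $B$, as the statement allows.

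The only point needing care is the first step, namely that linear independence on the original neighbourhood transfers to every sub-ball $B$. This follows from analyticity. Equivalently, one can use the nondegeneracy rank condition \eqref{nondegen} at the centre of $B$, which holds once $V_1$ is small enough, since nondegeneracy is an open condition. All remaining steps are routine.
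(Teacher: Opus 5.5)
Your argument is correct and is essentially the one the paper has in mind: the paper gives no detailed proof, only remarking (with a pointer to Bernik--Kleinbock--Margulis, eq.~(7.5)) that the lemma follows from the linear independence of $1,F_1,\dots,F_n$ on balls near $\bx_0$, and you combine exactly this with homogeneity and compactness of the unit sphere. Shrinking $V_1$ to make $\phi$ finite is harmless but not actually needed: $\phi$ is a supremum of continuous functions, hence lower semicontinuous, so it attains a positive minimum on the sphere in any case.
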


Verifying condition (2) of Theorem~KM will require us to deal with skew gradients \eqref{SG}. The following relevant result can be found in \cite[Proposition 4.1]{BKM}.

\begin{lemma}\label{LemSG}
Let $\bU\subset\R^d$, and $\bx_0\in \bU$, and $\mathcal{G}\subset C^{l}(\bU)$ be a compact family of maps $g:\bU\to \R^2 $ such that 
    \begin{equation}\label{eq: BKM skew requirement}
        \inf_{g\in \mathcal{G},\; \Vert \bv\Vert_2=1}\sup_{\vert \beta\vert\leq l}\vert \bv\cdot\partial_{\beta}g(\bx_0)\vert>0,
    \end{equation}
    then there exists $V\subset \bU$ containing $\bx_0$ such that 
    for every ball $B\subset V$ of $\bx_0$, there exists $\rho>0$, depending on $\mathcal{G}, B$ such that 
    $$
\sup_{\bx\in B}\Vert \tilde \nabla g(\bx)\Vert\geq \rho\quad \text{ for every } g\in \mathcal{G}.
    $$
\end{lemma}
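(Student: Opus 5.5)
The plan is a pointwise nonvanishing argument for each fixed $g$, followed by compactness over $\mathcal{G}$. Note first that \eqref{eq: BKM skew requirement} forces $l\ge1$: for $l=0$ we could pick a unit $\bv\perp g(\bx_0)$, making the infimum zero. I will use $l\ge1$ throughout, so that $g\mapsto \tilde\nabla g$ is continuous from $C^l(\bU)$ to $C^0(\bU)$.

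\emph{Step 1 (choosing $V$).} Let $c_0>0$ be the infimum in \eqref{eq: BKM skew requirement}. The map
$$
(g,\bv,\bx)\mapsto \max_{|\beta|\le l}|\bv\cdot\partial_\beta g(\bx)|
$$
is continuous on $\mathcal{G}\times S^1\times\bU$. Since $\mathcal{G}\times S^1$ is compact, a standard tube-lemma argument gives a neighbourhood $V\ni\bx_0$ on which
$$
\max_{|\beta|\le l}|\bv\cdot\partial_\beta g(\bx)|\ge c_0/2\quad\text{for all } g\in\mathcal{G},\ \|\bv\|_2=1,\ \bx\in V.
$$
In words: for each $g\in\mathcal{G}$, no fixed nontrivial linear combination $\bv\cdot g$ has all derivatives up to order $l$ vanishing at any point of $V$. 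In particular, no such combination vanishes identically on any open subset of $V$.

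\emph{Step 2 (one map at a time).} Fix a ball $B\subset V$ and $g=(g_1,g_2)\in\mathcal{G}$. I claim $\tilde\nabla g\not\equiv0$ on $B$. Suppose instead that $\tilde\nabla g\equiv 0$ on $B$.
\begin{itemize}
\item $g$ cannot vanish identically on $B$, since then Step 1 fails with, say, $\bv=(1,0)$.
\item So, after swapping coordinates if needed, there is a smaller ball $B'\subset B$ on which $g_1\neq0$.
\item On $B'$ we have $\nabla(g_2/g_1)=\tilde\nabla(g_1,g_2)/g_1^2=0$. Since $B'$ is connected, $g_2=cg_1$ on $B'$ for some constant $c$.
\item Then $\bv=(c,-1)/\sqrt{1+c^2}$ satisfies $\bv\cdot g\equiv0$ on $B'$, so every derivative of $\bv\cdot g$ vanishes on $B'$. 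This contradicts Step 1.
\end{itemize}
Hence $\phi(g):=\sup_{\bx\in B}\|\tilde\nabla g(\bx)\|>0$ for every $g\in\mathcal{G}$.

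\emph{Step 3 (uniformity).} The functional $\phi$ is continuous, indeed locally Lipschitz, on $C^l(\bU)$ with $l\ge1$. This holds because $\tilde\nabla g$ is bilinear in $(g,\nabla g)$, and $\overline B$ can be taken to be a compact subset of $\bU$ (shrink $V$ if necessary). A continuous positive function on the compact set $\mathcal{G}$ attains a positive minimum $\rho=\rho(\mathcal{G},B)$, which gives the lemma.

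The main obstacle is Step 1. The hypothesis is stated only at $\bx_0$, while the ball $B$ need not contain $\bx_0$. The nondegeneracy must therefore be propagated uniformly over $\mathcal{G}$ and over directions $\bv$ to a fixed neighbourhood, which is exactly where compactness of $\mathcal{G}$ and continuity of the derivatives up to order $l$ are used. Step 2 is the usual Wronskian-type observation that a vanishing skew gradient forces linear dependence of $g_1$ and $g_2$.
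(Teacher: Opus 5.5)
Your proof is correct. The paper gives no argument of its own for this lemma: it quotes it from \cite[Proposition~4.1]{BKM}, so there is no in-text proof to compare against. Your three steps give a complete, self-contained proof of exactly what the paper uses. These are:
\begin{itemize}
\item the tube-lemma propagation of the hypothesis from $\bx_0$ to a whole neighbourhood $V$, uniformly over $\mathcal{G}\times S^1$;
\item the observation that $\tilde\nabla g\equiv 0$ on a ball forces $g_2=cg_1$ on a sub-ball, via $\nabla(g_2/g_1)=\tilde\nabla(g_1,g_2)/g_1^2$, which contradicts Step~1;
\item taking the minimum of the continuous, pointwise positive functional $g\mapsto\sup_{\bx\in B}\Vert\tilde\nabla g(\bx)\Vert$ over the compact set $\mathcal{G}$.
\end{itemize}

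Two points deserve to be stated explicitly, both of which you essentially have. First, $C^l(\bU)$ should carry the topology of uniform convergence of all derivatives of order $\le l$ on compact subsets of $\bU$. This is what makes $(g,\bv,\bx)\mapsto\max_{|\beta|\le l}|\bv\cdot\partial_\beta g(\bx)|$ jointly continuous and $g\mapsto\sup_B\Vert\tilde\nabla g\Vert$ continuous. Second, $V$ should be taken with compact closure in $\bU$, so that $\overline{B}\subset\bU$ for every ball $B\subset V$.

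It is also worth noting two things about the structure of the argument. The quantitative bound $c_0/2$ from Step~1 is only used qualitatively (no nontrivial combination $\bv\cdot g$ vanishes on an open subset of $V$). The uniformity of $\rho$ over $\mathcal{G}$ comes entirely from Step~3. Your remark that the hypothesis forces $l\ge1$ is correct, and it is exactly what is needed for $\tilde\nabla g$ to make sense and depend continuously on $g$.
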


The next result specialises Lemma~\ref{LemSG} to pairs of functions
$\tilde F(\bx)\cdot\ba$.

\begin{lemma}\label{lem:crucial nabla}
Let $F$, $\tilde F$, and $\bx_0$ be as in Lemma~\ref{lem: just for rank 1}. Then there exists a neighborhood $V_2\ni\bx_0$ such that for any ball $B\subset V_2$, there exists $\rho_2>0$ such that for any $\ba,\mathbf{b}\in\R^{n+1}$ with $\Vert \ba\wedge \mathbf{b}\Vert\ge1$ we have that
    \begin{equation}
        \sup_{\bx\in B}\Vert \tilde\nabla (\tilde F(\bx)\cdot\ba, \tilde F(\bx)\cdot\mathbf{b})\Vert\geq \rho_2.
    \end{equation}
    
\end{lemma}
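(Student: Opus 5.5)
We will deduce the lemma from Lemma~\ref{LemSG} by choosing a suitable compact family $\mathcal{G}$. The skew gradient is bilinear and alternating in $(g_1,g_2)$. Hence, for $g=(\tilde F\cdot\ba,\tilde F\cdot\bb)$, the vector $\tilde\nabla g(\bx)$ depends only on $\ba\wedge\bb$, and linearly so. Writing $\tilde F\cdot\ba=\sum_i a_i\tilde F_i$, we get
\[
\tilde\nabla(\tilde F\cdot\ba,\tilde F\cdot\bb)=\sum_{i<j}(a_ib_j-a_jb_i)\,\big(\tilde F_i\nabla\tilde F_j-\tilde F_j\nabla\tilde F_i\big).
\]
If $\|\ba\wedge\bb\|\ge1$, put $s=\|\ba\wedge\bb\|$. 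Replacing $(\ba,\bb)$ by $(\ba/s,\bb)$ divides the skew gradient by $s\ge1$. So it suffices to prove the bound for pairs with $\|\ba\wedge\bb\|=1$.

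Every nonzero decomposable bivector can be written as $\ba\wedge\bb$ with $\ba,\bb$ orthonormal, after rescaling. If $\|\ba\wedge\bb\|=1$, then $\|\ba\wedge\bb\|_2\asymp1$, and we may take $\ba\wedge\bb=\lambda\,\ba'\wedge\bb'$ with $\ba',\bb'$ orthonormal and $\lambda\asymp_n1$. The family
\[
\mathcal{G}=\{g_{\ba',\bb'}=(\tilde F\cdot\ba',\tilde F\cdot\bb'):\ \ba',\bb'\ \text{orthonormal}\}
\]
is the continuous image of a Stiefel manifold, hence compact in $C^l(\bU)$ on a closed ball around $\bx_0$. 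This gives
\[
\sup_B\|\tilde\nabla g_{\ba,\bb}\|=\lambda\,\sup_B\|\tilde\nabla g_{\ba',\bb'}\|\gg\sup_B\|\tilde\nabla g_{\ba',\bb'}\|,
\]
so it remains to bound the right-hand side below uniformly over $\mathcal{G}$.

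The main step is to verify \eqref{eq: BKM skew requirement} for $\mathcal{G}$. For a unit vector $\bv=(v_1,v_2)\in\R^2$ we have $\bv\cdot g_{\ba',\bb'}=\tilde F\cdot\bw$ with $\bw=v_1\ba'+v_2\bb'$, and $\|\bw\|_2=1$ by orthonormality. We need $\sup_{|\beta|\le l}|\partial_\beta(\tilde F\cdot\bw)(\bx_0)|>0$ uniformly in such $\bw$. Since $|\beta|=0$ is allowed, this means the vectors $\partial_\beta\tilde F(\bx_0)$, $0\le|\beta|\le l$, must span $\R^{n+1}$. This follows from nondegeneracy \eqref{nondegen}. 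The derivatives $\partial_\beta\tilde F=(0,\partial_\beta F)$ with $|\beta|\ge1$ span $\{0\}\times\R^n$, and $\tilde F(\bx_0)$ has first coordinate $1$. Hence the quantity $\sup_{|\beta|\le l}|\bw\cdot\partial_\beta\tilde F(\bx_0)|$ is a positive continuous function on the unit sphere. By compactness it has a positive minimum, so the infimum in \eqref{eq: BKM skew requirement} is positive.

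Lemma~\ref{LemSG} then supplies a neighbourhood $V_2\ni\bx_0$ with the required property. For each ball $B\subset V_2$ it gives a $\rho>0$ that bounds the skew gradient of every $g\in\mathcal{G}$ from below on $B$. Combining this with the reductions above yields $\rho_2\gg\rho$. We expect the compactness of $\mathcal{G}$ and the spanning check at $\bx_0$ to be the only points needing care, and both are routine. It also helps to restrict $F$ to a ball with compact closure in $\bU$, so that the $C^l$ topology is well behaved.
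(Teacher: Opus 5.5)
Your proof is correct and follows the same route as the paper. Both apply Lemma~\ref{LemSG} to a family of pairs $(\tilde F\cdot\bw_1,\tilde F\cdot\bw_2)$ and verify \eqref{eq: BKM skew requirement} from nondegeneracy together with the $|\beta|=0$ term supplied by the first coordinate of $\tilde F$. Your normalisation to orthonormal $\ba',\bb'$, justified by the skew gradient depending linearly on $\ba\wedge\bb$, is actually the careful version of the paper's step: the paper's family indexed by $\Vert\bw_1\wedge\bw_2\Vert=1$ is not compact as literally stated (with $\bw_1=N\be_0$, $\bw_2=N^{-1}\be_1$ and $\bv=(0,1)$, the expression in \eqref{eq: BKM skew requirement} tends to $0$ as $N\to\infty$).
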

\begin{proof}
By the nondegeneracy of $F$ at $\bx_0$, there exists $l\in\N$ such that $F\in C^l$ in a neighborhood of $\bx_0$ and \eqref{nondegen} holds. Consider the collection 
$$
\mathcal{G}:=\big\{(g_1,g_2)~\vert~g_i(\bx)=\tilde F(\bx)\cdot\bw_i\text{ for some } \bw_1,\bw_2\in \R^{n+1},\; 
   \Vert \bw_1\wedge\bw_2\Vert=1\big\}.
$$
By the compactness of the subset $\Vert \bw_1\wedge\bw_2\Vert=1$ on the Grassmanian, $\mathcal{G}$ is a compact family of $C^l$ maps defined on some neighborhood of $\bx_0$. Thus, the desired conclusion will follow from Lemma~\ref{LemSG}  once we verify \eqref{eq: BKM skew requirement}. 

For $i=1,2$ write $\bw_i=(w_{i,0},\bw_i')$ with $\bw_i'\in\R^n$.
Then, 
$$
g_i(\bx)=\tilde F(\bx)\cdot\bw_i=w_{i,0}+F(\bx)\cdot\bw_i'.
$$
We also have that 
$$
\partial_{\beta}(g_1, g_2)(\bx) =(\partial_{\beta}\tilde F(\bx)\cdot\bw_1, \partial_{\beta}\tilde F(\bx)\cdot\bw_2).
$$
Hence, for any $\bv=(v_1,v_2)\in \R^2$ with $\Vert \bv\Vert_2=1$, we have that
$$
\partial_{\beta}(g_1, g_2)(\bx_0)\cdot \bv= \partial_{\beta}\tilde F(\bx_0)\cdot (v_1\bw_1+v_2\bw_2).
$$   
Suppose that 
$$
\inf_{\bv\in\R^2,\; \Vert \bv\Vert_2=1,\; g\in \mathcal{G}} \;\;\sup_{\vert\beta\vert\le l}\vert \bv\cdot \partial_{\beta} g(\bx_0)\vert =0.
$$ 
Then, by compactness, there exists $\bv\in \R^2$ with $\Vert \bv\Vert_2=1$ and $\bw_1,\bw_2\in \R^{n+1}$ with $\Vert \bw_1\wedge\bw_2\Vert=1$ such that $$\sup_{\vert\beta\vert\le l}\vert  \partial_{\beta}\tilde F(\bx_0)\cdot (v_1\bw_1+v_2\bw_2)\vert =0.
$$
If $v_1\bw_1'+v_2\bw_2'\neq \mathbf{0}$, then this contradicts the nondegeneracy of $F$ at $\bx_0$. Therefore, $v_1\bw_1'+v_2\bw_2'=\mathbf{0}$, and the above equation implies that 
$v_1 w_{10}+v_2 w_{20}=0$. This implies that 
$v_1\bw_1=-v_2\bw_2$, and hence $\bw_1\wedge \bw_2=\vv0$, contrary to the assumption $\Vert \bw_1\wedge \bw_2\Vert=1.$  
\end{proof}

\section{Proof of Theorem~\ref{new thm: BKM}}

We begin with the following key proposition.

\begin{proposition}\label{prop: crucial}
    Let $F:\bU\to\R^n$ be as in \eqref{eq: F}, analytic and nondegenerate at $\bx_0\in\bU$.
Then there exists a neighbourhood $V$ of $\bx_0$ and $\alpha>0$ depending on $F$ and $\bx_0$ such that for any ball $B\subset V$ there exists $E>0$ such that for any diagonal matrix $b=\diag(b_0, b_1,\dots, b_n)\in \mathrm{GL}_{n+1}(\R)$ satisfying
\begin{equation}\label{eq: condition on b}
b_0\geq 1,\quad b_{d+1}=\dots=b_n\leq 1,\quad b_{1}=\dots=b_d\geq 1,
\end{equation}
and
\begin{equation}\label{b_0 is too large 2}
    b_0 b_1 b_{d+1}^m\geq 1,
\end{equation}
for any $\ep>0$, 
\begin{equation}
    \mu_d\{\bx\in B~|~\lambda_1(bu_1^\star(\bx)\Z^{n+1})<\ep\}\leq E \ep^\alpha \mu_d(B),
\end{equation}
where $u_1^\star$ is given by \eqref{eqn:dual u_1(x)}.

\end{proposition}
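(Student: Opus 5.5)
We apply Theorem~KM with $k=n+1$ and $h(\bx)=b\,u_1^\star(\bx)$ on a ball $B_0=B$, where $3^{n+1}B$ is contained in the neighbourhood supplied by Lemma~\ref{lem: good func}. Condition~(1) will come from Lemma~\ref{lem: good func}. The main work is condition~(2): we need a lower bound $\rho>0$, uniform in $b$ subject to \eqref{eq: condition on b} and \eqref{b_0 is too large 2}, for $\sup_{\bx\in B}\|h(\bx)\bv_1\wedge\dots\wedge h(\bx)\bv_r\|$ over all primitive collections. Then Theorem~KM gives the bound $C(\ep/\rho)^\alpha\mu_d(B)$ for $\ep\le\rho$. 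For $\ep>\rho$ we take $E\ge\rho^{-\alpha}$, which makes the estimate trivial.

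\textbf{Coordinates of $u_1^\star(\bx)\bv$.} By \eqref{eqn:dual u_1(x)}, for $\bv=(v_0,\bv',\bv'')\in\Z\times\Z^d\times\Z^m$ we have
\[
u_1^\star(\bx)\bv=\big(v_0-\bx\cdot\bv'-f(\bx)\cdot\bv'',\ \bv'+J(\bx)\bv'',\ \bv''\big).
\]
The first coordinate is $\tilde F(\bx)\cdot(v_0,-\bv',-\bv'')$. The middle block is, up to sign, the gradient of that function. The last block is constant.

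Now multiply by $b$. The middle block gets a factor $b_1\ge1$. The last block gets a factor $b_{d+1}\le1$, and this is the only factor that shrinks anything. By \eqref{b_0 is too large 2}, $b_{d+1}^m\ge(b_0b_1)^{-1}$.

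\textbf{Exterior powers.} Each coordinate of $h\bv_1\wedge\dots\wedge h\bv_r$ is a product of entries of $b$ times a minor of $u_1^\star(\bx)[\bv_1\cdots\bv_r]$. Such minors are polynomials in $1,F$ and $\nabla F$, so they are analytic, which takes care of the goodness in condition~(1).

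For condition~(2) we pick, for each $r$, a coordinate whose $b$-weight is at least $1$, and bound the corresponding minor below on $B$.
\begin{itemize}
\item[(a)] \emph{$r=1$.} Use the $\be_0$-coordinate: it is $b_0\,\tilde F(\bx)\cdot\ba$ with $\ba$ a nonzero integer vector, so its supremum over $B$ is at least $\rho_1$ by Lemma~\ref{lem: just for rank 1}.
\item[(b)] \emph{$r=2$.} Use coordinates of type $\be_0\wedge\be_i$ with $1\le i\le d$, which carry weight $b_0b_1\ge1$. Their values are the components of the skew gradient $\tilde\nabla(\tilde F\cdot\ba,\tilde F\cdot\bb)$, and Lemma~\ref{lem:crucial nabla} bounds these below.
\item[(c)] \emph{General $r$.} Write $V=\Span(\bv_1,\dots,\bv_r)$ and let $s=\dim\big(V\cap(\{0\}^{d+1}\times\R^m)\big)$. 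Change the primitive basis by a unimodular transformation, using Lemma~\ref{lem: helpful-general}, so that its last $s$ vectors span $V\cap(\{0\}^{d+1}\times\R^m)$. The coordinate $\be_0\wedge\be_{1}\wedge\dots\wedge\be_{r-1-s}\wedge(\text{$s$ last indices})$ then has weight $b_0b_1^{r-1-s}b_{d+1}^s\ge b_0b_1b_{d+1}^m\ge1$, since $s\le m$ and $b_1\ge1$. Note that \eqref{b_0 is too large 2} is exactly what is needed when $s=m$.
\end{itemize}
We also need to handle the case where the "first" factor has small $\be_0$-component; there we may instead use weights like $b_1^{r-s}b_{d+1}^s$. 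We would handle this by passing to the dual collection via Lemma~\ref{lem: dual prim} and Lemma~\ref{lem: dual basis consq}, which turns large $r$ into small $n+1-r$. This reduces to showing that the relevant minors, which are built from $\tilde F$ and its first derivatives paired with integer multivectors, have supremum on $B$ bounded below by compactness and nondegeneracy, in the spirit of Lemma~\ref{LemSG}.

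\textbf{Main obstacle.} The hardest step is (c): the uniform lower bound for general $r$, where the contracting directions $b_{d+1}$ must be compensated. We will do this case by case on $s$. When $s=0$, the integer vector in $\Z^m$ gives a lower bound of $1$ on a block of the minor. When $s\ge1$, the weight argument above applies. Primitivity (Corollary~\ref{primitive2}) will guarantee that the integer minor is at least $1$ where needed.
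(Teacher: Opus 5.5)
Your steps (a) and (b) are correct and match the paper; $r=2$ is the paper's Case~1 with no last-block indices. The gap is in step (c). The coordinate you prescribe through $s=\dim(V\cap W)$, with $W=\{0\}^{d+1}\times\R^m$, does not give a uniform lower bound, and the fallback you sketch is not an argument. There are two concrete failures.

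\emph{First failure: the prescribed minor can vanish identically.} Take $n=4$, $d=m=2$ and $F(x,y)=(x,y,x+y^2,y^3)$, which is analytic and nondegenerate everywhere. Take $r=3$ and the primitive collection $\be_0,\be_2,\be_1-\be_3$.
\begin{itemize}
\item Here $V\cap W=\{\mathbf 0\}$, so $s=0$, and the only coordinate of your type is $\be_0\wedge\be_1\wedge\be_2$.
\item Row~$1$ of $u_1^\star(\bx)$ is $(0,1,0,1,0)$, which is orthogonal to all three vectors.
\item So this minor vanishes identically on every ball, even though its weight $b_0b_1^2$ is $\ge1$.
\end{itemize}
The coordinate that does work here is $\be_0\wedge\be_2\wedge\be_3$: its minor is identically $\pm1$ and its weight is $b_0b_1b_{d+1}\ge1$. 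This is not an accident. A minor with two or more rows $\partial_i\tilde F$ is governed by the $k$-high nondegeneracy of Section~5, which nondegeneracy does not imply. So no ``compactness and nondegeneracy'' argument can bound it. Lemma~\ref{LemSG} only handles a pair of functions, that is, a single derivative row.

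\emph{Second failure: the weight bound assumes $r-1-s\ge1$.} For $V=\Span(\be_0,\be_{d+1},\dots,\be_n)$ one has $s=m=r-1$. The prescribed coordinate then has absolute value exactly $b_0b_{d+1}^m$. This equals $1/b_1$ when equality holds in \eqref{b_0 is too large 2}, so it is not bounded below uniformly in $b$. The alternative weights $b_1^{r-s}b_{d+1}^s$ you mention lack the factor $b_0$. They are not bounded below under the hypotheses: take $b_1=1$, $b_{d+1}\to0$, $b_0=b_{d+1}^{-m}$.

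\emph{What is missing is the paper's rule for choosing the coordinate.}
\begin{itemize}
\item After dualising with Lemmas~\ref{lem: dual prim} and~\ref{lem: dual basis consq}, take $I=\{0,i_1,\dots,i_{s+1},j_1,\dots,j_{r-2-s}\}$. The $I$-coordinate is the weight $b_0b_1^{s+1}b_{d+1}^{r-2-s}$ times $\|\tilde F\wedge\bigwedge_k\partial_{i_k}\tilde F\wedge\bigwedge_k\be_{j_k}\wedge\bu_1\wedge\dots\wedge\bu_{n+1-r}\|$, where the $\bu$'s span $V^\perp$.
\item One takes as many last-block indices as possible (at most $r-2$) subject to $\bigwedge_k\be_{j_k}\wedge\bu_1\wedge\dots\wedge\bu_{n+1-r}\ne\mathbf 0$. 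This count is controlled by $W\cap V^\perp$, not by $W\cap V$.
\item If $r-2$ such indices are available ($s=0$), there is exactly one derivative row. The minor is a component of a skew gradient $\tilde\nabla(\tilde F\cdot\ba,\tilde F\cdot\bb)$ with $\ba\wedge\bb$ a nonzero integer bivector, so Lemma~\ref{lem:crucial nabla} applies.
\item If not ($s>0$), maximality forces all of $\be_{d+1},\dots,\be_n$ into the span of the chosen $\be_{j_k}$ and the $\bu$'s. Lemma~\ref{lem: helpful-general} rewrites the wedge as $\ell\,\be_{d+1}\wedge\dots\wedge\be_n\wedge\bu'_1\wedge\dots$. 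This kills the $J(\bx)$-part of each $\partial_{i_k}\tilde F$, so those rows become the constants $\be_{i_k}$. The minor collapses to $|\tilde F(\bx)\cdot\ba|$ for a nonzero integer $\ba$, which Lemma~\ref{lem: just for rank 1} handles.
\item In both cases $r-2-s\le m$, so the weight is $\ge b_0b_1b_{d+1}^m\ge1$.
\end{itemize}
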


\begin{proof}
With the view to applying Theorem~KM stated in \S~\ref{sec: KM}, define 
$$
h(\bx):=b u_1^\star(\bx).$$ 
In what follows a ball $B(\bx_0)$ will be required to satisfy condition $3^{n+1}B(\bx_0)\subset V_1\cap V_2$, where $V_1$ and $V_2$ are as in Lemmas~\ref{lem: just for rank 1} and~\ref{lem:crucial nabla}. By Lemma~\ref{lem: good func}, there exists a a ball $B(\bx_0)$, and constants $C$ and $\alpha>0$ such that for any primitive collection $\bv_1,\dots,\bv_r\in \Z^{n+1}$, the map $\Vert h(\bx)\bv_1\wedge\dots\wedge h(\bx)\bv_r\Vert$ is $(C, \alpha)$ good on  $3^{n+1}B$, for any ball $B\subset B_0$.

Now fix a ball $B\subset B(\bx_0)$. To apply Theorem~KM, we need to check that there exists $\rho$ satisfying $0<\rho<1/(n+1)$, such that for any primitive collection $\bv_1,\dots,\bv_r\in\Z^{n+1}$,
\begin{equation}\label{want: lower bound on covol}
\sup_{\bx\in B}\Vert h(\bx)\bv_1\wedge\dots\wedge h(\bx)\bv_r\Vert\geq \rho.
\end{equation}
For $r=1$, this follows  from Lemma~\ref{lem: just for rank 1} and the fact that $b_0\geq 1$, for any $\rho\leq \rho_1$. 

Thus, from now on we let $r\geq 2$. As before, we denote the standard basis vectors in $\R^{n+1}$ by $\be_0,\be_1,\dots,\be_n$. Let  $\bv_1,\dots,\bv_r\in\Z^{n+1}$ be any primitive collection. Then, by Lemma~\ref{lem: dual prim}, there exists a primitive collection $\bu_1,\dots,\bu_{n+1-r}\in\Z^{n+1}$ satisfying \eqref{dot} and \eqref{same norm}.
Take any multi-index of the form
$$
I=\{0,i_1,\dots, i_{s+1}, j_1, \dots, j_{r-2-s}\},
$$ 
where 
\begin{equation}\label{cond_I}
0\le s\le r-2,\; 
\{j_1,\dots, j_{r-2-s}\}\subset \{d+1,\dots, n\},\; \{i_1,\dots,i_{s+1}\}\subset \{1,\dots, d\}.
\end{equation}
Then, by Lemma~\ref{lem: dual basis consq} and the Laplace  identity (see \cite[p.105]{MR1176315} or \cite[Equation~(3.3)]{Ber12}), the absolute value of the $I$-coordinate of the multivector appearing in \eqref{want: lower bound on covol}, with respect to the standard basis \eqref{basis_wedge_k}, is given by 
\begin{equation}\label{eq: covolume}
\begin{aligned}
    &\vert (h(\bx)\bv_1\wedge\dots\wedge h(\bx)\bv_r)_I\vert\\
    &=b_0 \prod_{k=1}^{s+1} b_{i_k}\prod_{k=1}^{r-2-s}b_{j_k}\vert \tilde F(\bx)\wedge_{k=1}^{s+1} \partial_{i_k}\tilde F(\bx)\wedge_{k=1}^{r-2-s} \be_{j_k}\wedge\bu_1\wedge \dots\wedge \bu_{n+1-r}\vert. 
\end{aligned}
\end{equation}

Let $s$ be the smallest integer in the range $0\le s\le r-2$ such that there exists a $\{j_1,\dots,j_{r-2-s}\}\subset \{d+1,\dots, n\}$ satisfying
\begin{equation}\label{eq: wedge nonzero}
\wedge_{k=1}^{r-2-s} \be_{j_k}\wedge\bu_1\wedge \dots\wedge \bu_{n+1-r}\neq \mathbf{0}.\end{equation} 
Note that such $s$ exists, since if $s=r-2$ then $\{j_1,\dots,j_{r-2-s}\}=\varnothing$ meets \eqref{eq: wedge nonzero} by the linear independence of the vectors $\bu_1,\dots,\bu_{n+1-r}$. We now consider two different scenarios for the value of $s$.

\medskip

\noindent\textbf{Case $1$:}  $s=0$. In this case, by Lemma~\ref{lem: dual prim}, there exists $\ba,\mathbf{b}\in\Z^{n+1}$ orthogonal to every vector appearing in \eqref{eq: wedge nonzero} such that 
$$
\Vert \ba\wedge \mathbf{b}\Vert_2= \Vert \wedge_{k=1}^{r-2} \be_{j_k}\wedge\bu_1\wedge \dots\wedge \bu_{n+1-r}\Vert_2.
$$ Now using Lemma~\ref{lem: dual basis consq}, we for any $i_1\in \{1,\dots, d\}$
$$
\begin{aligned}
&\vert \tilde F(\bx)\wedge \partial_{i_1}\tilde F(\bx)\wedge_{k=1}^{r-2} \be_{j_k}\wedge\bu_1\wedge \dots\wedge \bu_{n+1-r}\vert 
 =\vert (\tilde F(\bx)\wedge \partial_{i_1}\tilde F(\bx))\cdot (\ba\wedge \mathbf{b})\vert.
\end{aligned}
$$
By Lemma~\ref{lem:crucial nabla}, we know there exists $\rho_2>0$ such that 
$$\sup_{\bx\in B}\Vert \tilde\nabla (\tilde F(\bx)\cdot\ba, \tilde F(\bx)\cdot\mathbf{b})\Vert\geq \rho_2. $$
Here $\rho_2$ only depends on the ball $B$ and $F$, but not $\ba$ and $\vv b$. Hence, there exists $\bx_1\in B$, depending on $\ba$ and $\vv b$, such that 
$$\Vert\tilde\nabla (\tilde F(\bx_1)\cdot\ba, \tilde F(\bx_1)\cdot\mathbf{b})\Vert\geq \rho_2/2.$$ 

Note that for any $\bx\in B,$
$$
\Vert \tilde\nabla (\tilde F(\bx)\cdot\ba, \tilde F(\bx)\cdot\mathbf{b})\Vert=\max_{i=1}^d \Vert (\tilde F(\bx)\wedge \partial_{i}\tilde F(\bx))\cdot (\ba\wedge \mathbf{b})\Vert.
$$

Thus there exists there exists $1\leq i_1\leq d$ (depending on $\bx_1$ and $\ba,\mathbf{b}$) such that $$\Vert (\tilde F(\bx_1)\wedge \partial_{i_1}\tilde F(\bx_1))\cdot (\ba\wedge \mathbf{b})\Vert>\rho_2/2.$$
Then, by including this $i_1$ into the multi-index $I=\{0, i_1, j_1,\dots, j_{r-2}\}$ we obtain that
\begin{equation}\label{eq3.8}
    \sup_{\bx\in B}\Vert h(\bx)(\bv_1\wedge\dots\wedge\bv_r)\Vert\geq \vert h(\bx_1)(\bv_1\wedge\dots\wedge\bv_r)\vert_{I}\stackrel{\eqref{eq: condition on b}}{>} b_0 b_1 b_{d+1}^{m}\rho_2/2\stackrel{\eqref{b_0 is too large 2}}{>}\rho_2/2>0.
\end{equation}
Note that the $s=0$ case can only happen when $r\leq m+2$. Hence, in view of the assumption $b_{d+1}\leq 1$, imposed by \eqref{eq: condition on b}, we have the inequality $b_0 b_1 b_{d+1}^{r-2}\geq b_0 b_1 b_{d+1}^m$, which was used in \eqref{eq3.8} alongside \eqref{b_0 is too large 2}.

\medskip

\noindent\textbf{Case $2$:}
$s> 0$. By definition, there exist $j_1,\dots, j_{r-2-s}\in \{d+1,\dots, n\}$ such that \eqref{eq: wedge nonzero} holds. This immediately implies that $r-2-s\leq m=n-d$. By the minimality of $s$ and the assumption $s>0$, we have that for any $j\in J':=\{d+1,\dots,n\}\setminus\{j_1,\dots, j_{r-2-s}\}$ 
\begin{equation}\label{condition3.8}
\be_j\wedge \bigwedge_{k=1}^{r-2-s}\be_{j_k}\wedge\bu_1\wedge\dots\wedge\bu_{n+1-r}=\mathbf{0}.
\end{equation}

Observe that  $\{\be_j:j\in J'\}\cup\{\be_{j_1},\dots,\be_{r-2-s}\}=\{\be_{d+1},\dots,\be_n\}$ is a primitive collection. Also, note by \eqref{cond_I} that $d-1-s\ge0$.
Then, in view of \eqref{eq: wedge nonzero} and \eqref{condition3.8}, by Lemma~\ref{lem: helpful-general}, there exist $\bu_1',\dots,\bu'_{d-1-s}\in \Z^{n+1}$ such that
\begin{equation}\label{def: bu'}
\bigwedge_{k=1}^{r-2-s} \be_{j_k}\wedge\bu_1\wedge \dots\wedge \bu_{n+1-r}=\ell\bigwedge_{j=d+1}^n \be_{j}\wedge\bu_1'\wedge \dots\wedge \bu'_{d-1-s},
\end{equation}
where $\ell$ is a non-zero integer. 

By the definition of $s$, \eqref{eq: wedge nonzero} and \eqref{def: bu'}, there exist $i_1,\dots, i_{s+1}\in \{1,\dots, d\}$ such that 
\begin{equation}\label{eq: nonzero wedge 2}
\wedge_{k=1}^{s+1} \be_{i_k}\wedge\be_{d+1}\wedge\dots\wedge\be_{n+1}\wedge\bu_1'\wedge\dots\wedge\bu'_{d-1-s}\neq \mathbf{0}.
\end{equation} 
Let $I=\{0, i_1,\dots, i_{s+1}, j_1,\dots, j_{r-2-s}\}$. Note here $I$ only depends on $\bv_1,\dots,\bv_r$. Then, using \eqref{def: bu'} we get that $\vert (h(\bx)\bv_1\wedge\dots\wedge h(\bx)\bv_r)_I\vert$ is equal to
$$\begin{aligned}
     &\vert \tilde F(\bx)\wedge_{k=1}^{s+1} \partial_{i_k}\tilde F(\bx)\wedge_{k=1}^{r-2-s} \be_{j_k}\wedge\bu_1\wedge \dots\wedge \bu_{n+1-r}\vert\\
     &= \vert \ell \tilde F(\bx)\wedge_{k=1}^{s+1} \partial_{i_k}\tilde F(\bx)\wedge_{j=d+1}^{n+1} \be_{j}\wedge\bu'_1\wedge \dots\wedge \bu'_{d+1-s}\vert.
\end{aligned}$$
Using the fact that $F(\bx)=(\bx, f(\bx))$, 
the above becomes
\begin{equation}\label{eq: 1 row imp}
\vert \ell\tilde F(\bx)\wedge_{k=1}^{s+1} \be_{i_k}\wedge_{k=d+1}^{n+1} \be_{j}\wedge\bu'_1\wedge \dots\wedge \bu'_{d+1-s}\vert.
\end{equation}

By \eqref{eq: nonzero wedge 2}, Lemma~\ref{lem: dual prim} guarantees that there exists $\ba\in\Z^{n+1}\setminus \{\mathbf{0}\}$, which is orthogonal to all the vectors appearing in \eqref{eq: nonzero wedge 2}, and 
$$
\Vert \ell\wedge_{k=1}^{s+1} \be_{i_k}\wedge\be_{d+1}\wedge\dots\wedge\be_{n+1}\wedge\bu_1'\wedge\dots\wedge\bu'_{d-1-s}\Vert_2=\Vert \ba\Vert_2.$$
Thus using Lemma~\ref{lem: dual basis consq} we can rewrite  \eqref{eq: 1 row imp} as follows
$$
\vert \ell\tilde F(\bx)\wedge_{k=1}^{s+1} \be_{i_k}\wedge_{k=d+1}^{n+1} \be_{j}\wedge\bu'_1\wedge \dots\wedge \bu'_{d+1-s}\vert=\vert \tilde F(\bx)\cdot\ba\vert.$$
Now using \eqref{eq: covolume} and Lemma~\ref{lem:crucial nabla}, we get that
 \begin{equation}
     \sup_{\bx\in B}\Vert h(\bx)(\bv_1\wedge\dots\wedge\bv_r)\Vert>b_0 b_1^{s+1} b_{d+1}^{r-s-2}\rho_1> b_0 b_1 b_{d+1}^{m}\rho_1\stackrel{\eqref{b_0 is too large 2}}{>}\rho_1>0.
 \end{equation}
Here, we used the fact that $r-s-2\leq m$, and that $b_{d+1}\leq 1$.

Finally taking $\rho=\min\{\rho_1,\rho_2/2, 1/(n+1)\}$ verifies \eqref{want: lower bound on covol}.
\end{proof}

\begin{remark}
    In the above proposition, $b_{d+1}\leq 1$ is imposed just to make this an interesting case. In fact, when $b_{d+1}\geq 1$, then all eigenvalues are $\geq 1$, and the conclusion of the proposition is also true. 
\end{remark}

\subsection{Proof of Theorem~\ref{new thm: BKM}}
We will apply Proposition~\ref{prop: crucial}, with 
$$
b_0=\delta^{-1}(\delta KT^m)^{1/(m+2)}, \qquad b_1=\dots=b_d= K^{-1}(\delta K T^m)^{1/(m+2)}
$$ 
and 
$$
b_{d+1}=\dots=b_{n+1}= T^{-1}(\delta K T^m)^{1/(m+2)}.$$ 
Conditions \eqref{eq: condition on b} imposed on $b$ within Proposition~\ref{prop: crucial} are readily verified using \eqref{condition on K T}. Also note that 
$$
b_0 b_1 b_{d+1}^{m}= \delta^{-1} K^{-1} T^{-m} (\delta K T^{m})^{(m+2)/(m+2)}=1,
$$ 
thus \eqref{b_0 is too large 2} is satisfied. Next, using the form of $F$ given by \eqref{eq: F} and the definitions of $b$ and $u_1^*$ we observe that
$$
\Deltanew(\delta, K, T)\subset \{\bx\in \bU~|~\lambda_1(bu_1^\star(\bx)\Z^{n+1})\leq (\delta K T^m)^{1/{(m+2)}}\}.
$$ 
Now Theorem~\ref{new thm: BKM} follows from Proposition~\ref{prop: crucial}.

\section{Counting rational points}

Throughout this section $\lambda_1(\Lambda),\dots,\lambda_{n+1}(\Lambda)$ will denote successive minima of a lattice $\Lambda\subset\R^{n+1}$.
Let $\bU\subset \R^d$ and the map $F$ be of the same form as before, that is \eqref{eq: F}, and regarded as a parameterisation of a given manifold. For any subset $\Delta\subset \bU$, define
$$N^\star(\Delta;\ep, t):=\#\Big\{(q, \bp)\in \Z^{n+1}_{\rm prim}~|~\exists~ \bx\in \Delta \text{ such that }\begin{array}{l}
     \Vert qF(\bx)-\bp\Vert<\ep,  \\[0.5ex]
     e^t\leq \vert q\vert< e^{t+1} 
\end{array} \Big\}.$$
Thus, if $\cM=F(\bU)$, $N(\ep,t)=N^\star(\bU;\ep,t).$
We also define the following modified counting function in which the distance to $\cM$ is taken across a particular directon:
$$
N(\Delta;\ep, t):=\#\mathcal{N}(\Delta;\ep, t)
$$
where
\begin{equation}\label{NN}
\mathcal{N}(\Delta;\ep, t)=\begin{aligned}\left\{(q, \ba,\bb)\in \Z^{n+1}_{\rm prim}~|~\begin{array}{l}
\ba/q\in \Delta,\\[0.5ex] 
\Vert qf(\ba/q)-\bb\Vert\leq \ep,\\[0.5ex] 
e^t\leq \vert q\vert< e^{t+1}
\end{array}\right\},
\end{aligned}
\end{equation}
where $\ba\in\Z^d$ and $\bb\in\Z^m$.
It is readily seen that 
$$
N(\Delta;\ep, t)\leq N^\star(\Delta;\ep, t)
$$
and
$$
N(\Delta_1\cup\Delta_2;\ep, t)=N(\Delta_1;\ep, t)+N(\Delta_2;\ep, t)
$$
whenever $\Delta_1,\Delta_2\subset \bU$ are disjoint. 

Let 
\begin{equation}\label{g}
g:=\diag(\underbrace{\ep^{-1} }_{\text{$m$ copies}},\underbrace{\ep^{-1/2}e^{-t/2}}_{\text{$d$ copies}}, e^{-t})\in \mathrm{GL}_{n+1}(\R),
\end{equation}
and define the set 
$$
\mathfrak{M}_0(\varepsilon,t):=\Big\{\bx\in \bU~|~\lambda_{n+1}(gu_1(\bx)\Z^{n+1})\ge\frac1{(n+1)^{3/2}}\Big\},
$$ where $u_1(\bx)$ is defined in Section~\ref{sec: notation}.
Then by Mahler's duality see \cite[Theorem~3.2 and Lemma~3.3]{BY}, we have that
\begin{equation}\label{eq: dual definition of special}
\mathfrak{M}_0(\varepsilon,t)\subseteq\{\bx\in \bU~|~\lambda_{1}(g^\star u_1^\star(\bx)\Z^{n+1})\ll1\},
\end{equation}
where the implied constant depends on $n$ only. Define the special part as the $\ep^{1/2}e^{-t/2}$-thickening of 
$\mathfrak{M}_0(\varepsilon,t)$:
\begin{equation}\label{def: minor}\mathfrak{M}(\varepsilon,t):= \bigcup_{\bx\in \mathfrak{M}_0(\varepsilon,t)} B(\bx, \ep^{1/2}e^{-t/2})\cap\bU.\end{equation} 
We proceed with the following analogue of \cite[Proposition 5.1]{BY}.

\begin{lemma}\label{lem: minor is small}
Let $F$ be an analytic map of the form \eqref{eq: F},  nondegenerate at $\bx_0\in\bU$. Let $\kappa>0$. Then there exists a ball $B_0$ centred at $\bx_0$ and constant $c=c(B_0, F, n, d,\kappa)>1$ such that for any $t\geq 1$ and any $\ep\in(0,1)$ satisfying \begin{equation}\label{lower bound on ep}
    \ep> c\,e^{-\frac{3t}{2m+1}},
\end{equation}
we have that
$$\mu_d(\mathfrak{M}(\varepsilon,t)\cap B_0)\leq  \kappa\mu_d(B_0).
$$ Moreover, $\mathfrak{M}(\varepsilon,t)$ can be written as a union of balls of radius $\ep^{1/2}e^{-t/2}$ of intersection multiplicity at most $N_d$, the Besicovitch constant.
\end{lemma}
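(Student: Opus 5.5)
Since $\mathfrak M_0(\ep,t)$ sits inside the set where $\lambda_1(g^\star u_1^\star(\bx)\Z^{n+1})\ll1$ by \eqref{eq: dual definition of special}, the plan is to bound the measure of the thickening $\mathfrak M(\ep,t)$ by the measure of a slightly enlarged version of that set, and then apply Proposition~\ref{prop: crucial} with a rescaled diagonal matrix.

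\textbf{Step 1: write $g^\star$ in the form required by Proposition~\ref{prop: crucial}.}
From the definition $A^\star=\sigma(A^\top)^{-1}\sigma$ and \eqref{g}, the matrix $g^\star$ is diagonal. Up to the ordering imposed by $\sigma$, it is $\diag(e^{t},\ep^{1/2}e^{t/2},\dots,\ep^{1/2}e^{t/2},\ep,\dots,\ep)$, with $d$ middle entries and $m$ final entries. Here $e^t\ge1$ and $\ep\le1$. The middle entries are $\ge1$ exactly when $\ep\ge e^{-t}$, which holds under \eqref{lower bound on ep} since $3/(2m+1)\le1$. The product condition \eqref{b_0 is too large 2} reads $e^{3t/2}\ep^{m+1/2}\ge1$, i.e.\ $\ep\ge e^{-3t/(2m+1)}$. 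This is exactly where the exponent comes from, but with no room to spare, so the $\ll1$ threshold has to be absorbed.

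\textbf{Step 2: rescale to absorb the threshold.}
Take $b=\theta g^\star$ with $\theta=(e^{3t/2}\ep^{m+1/2})^{-1/(m+2)}\le1$. This gives $b_0b_1b_{d+1}^m=1$, while the entries keep the sizes required by \eqref{eq: condition on b}. The checks $\theta e^t\ge1$ and $\theta\ep^{1/2}e^{t/2}\ge1$ reduce to $\ep\gg e^{-t}$-type inequalities, which follow from \eqref{lower bound on ep}. Then
\[
\lambda_1(g^\star u_1^\star(\bx)\Z^{n+1})\ll1 \quad\Longrightarrow\quad \lambda_1(bu_1^\star(\bx)\Z^{n+1})\ll\theta\le (c^{-(2m+1)/2})^{1/(m+2)}.
\]
Proposition~\ref{prop: crucial} therefore bounds the measure of the dual set in $B_0$ by $E\,(C\theta)^\alpha\mu_d(B_0)$, which is small once $c$ is large. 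If the ordering from $\sigma$ makes the $\ep$ entries land where $\diag(b_0,\dots)$ expects different ones, I would conjugate by the coordinate permutation; the form of $u_1^\star$ in \eqref{eqn:dual u_1(x)} suggests the ordering matches.

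\textbf{Step 3: pass from $\mathfrak M_0$ to its thickening.}
For $\|\by-\bx\|\le\ep^{1/2}e^{-t/2}$, I will show that $g^\star u_1^\star(\by)u_1^\star(\bx)^{-1}(g^\star)^{-1}$ is bounded. Writing $u_1^\star(\by)u_1^\star(\bx)^{-1}$ via \eqref{eqn:dual u_1(x)}, the off-diagonal entries are differences in $\bx$, $f$ and $J$. Each is of size $\|\by-\bx\|$, except the $f$-term, whose linear part cancels against $J$ and leaves a Taylor remainder of size $M\|\by-\bx\|^2$ by \eqref{eq: bound on derivatives}. After conjugating by $g^\star$:
\begin{itemize}
\item the $x$-row entries are multiplied by $e^t\cdot(\ep^{1/2}e^{t/2})^{-1}$;
\item the remainder is multiplied by $e^t\ep^{-1}$;
\item the Jacobian difference is multiplied by $\ep^{1/2}e^{t/2}\ep^{-1}$.
\end{itemize}
With $\|\by-\bx\|\le\ep^{1/2}e^{-t/2}$ all three products are $O(1)$. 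Hence $\mathfrak M\cap B_0$ is contained in the set where $\lambda_1(g^\star u_1^\star(\by)\Z^{n+1})\ll1$, with a larger implied constant. For $\bx$ near the boundary, I would shrink $B_0$ inside a ball of the proposition, or use the doubled ball.

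\textbf{Step 4: the covering claim and the main obstacle.}
The covering statement follows from the Besicovitch covering theorem applied to the family of balls in \eqref{def: minor}. I expect the main obstacle to be Step 3: the bookkeeping of which entries of $g^\star$ multiply which differences, and checking that the cancellation between the $f$-row and the $J$-row really gives the quadratic remainder.
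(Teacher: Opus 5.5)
Your proposal is correct, and its measure-theoretic core is the same as the paper's. With $g^\star=\diag(e^t,\ep^{1/2}e^{t/2}I_d,\ep I_m)$, your matrix $b=\theta g^\star$ is exactly the one the paper feeds into Proposition~\ref{prop: crucial} when it proves Theorem~\ref{new thm: BKM} with $\delta=e^{-t}$, $K=(\ep e^t)^{-1/2}$, $T=\ep^{-1}$, because then $\theta=(\delta KT^m)^{1/(m+2)}$.

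The real difference is how you handle the thickening. The paper cites \cite[Lemma 6.7]{BD} to get $\mathfrak M(\ep,t)\subset\Deltanew(c_2\delta,c_2K,c_2T)$ and then applies Theorem~\ref{new thm: BKM}. In effect it converts the lattice condition into the inequalities defining $\Deltanew$ and then back into a lattice condition. You stay with lattices throughout and bound $g^\star u_1^\star(\by)u_1^\star(\bx)^{-1}(g^\star)^{-1}$ directly. Your computation is right: the off-diagonal blocks of $u_1^\star(\by)u_1^\star(\bx)^{-1}$ are $\bx-\by$, $f(\bx)-f(\by)+(\by-\bx)J(\bx)=O(\|\by-\bx\|^2)$ and $J(\by)-J(\bx)$. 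Conjugation scales them by $\ep^{-1/2}e^{t/2}$, $e^t\ep^{-1}$ and $\ep^{-1/2}e^{t/2}$ respectively, so all are $O(1)$ when $\|\by-\bx\|\le\ep^{1/2}e^{-t/2}$. The paper's route is modular and passes through a BKM-type estimate of independent interest. Yours is self-contained and shows directly that $\ep^{1/2}e^{-t/2}$ is exactly the scale at which this conjugated perturbation stays bounded.

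A few minor points:
\begin{itemize}
\item No reordering or permutation is needed. The map $\sigma$ simply reverses the diagonal of $g^{-1}$, so $g^\star$ already has the ordering required by \eqref{eq: condition on b}.
\item All conditions in \eqref{eq: condition on b} for $\theta g^\star$, namely $\theta e^t\ge1$, $\theta\ep^{1/2}e^{t/2}\ge1$ and $\theta\ep\le1$, follow from $\ep<1\le e^t$ alone. Your checks reduce to $\ep\le e^t$, not to an inequality of the form $\ep\gg e^{-t}$. So \eqref{lower bound on ep} is used only to make $\theta$ small.
\item For the Taylor step, choose $B_0$ so that its concentric enlargement by $e^{-1/2}$ still lies in $\bU$. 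Then the segment $[\bx,\by]$ stays where \eqref{eq: bound on derivatives} applies.
\end{itemize}
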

\begin{proof}
    From \cite[Lemma 6.7]{BD}, we get that 
    $$\mathfrak{M}(\varepsilon,t)\subset \Deltanew(c_2\delta, c_2K, c_2T),$$ where $\delta=e^{-t}, K=(\ep e^t)^{-1/2}, T=\ep^{-1}$ and $c_2>0$ depends on $F$ and $n$. 
    Note that $$\delta K T^m= e^{-3t/2}\ep^{-(m+1/2)}.$$ 
Since trivially $\ep<1\le e^{t}$, we have that $\delta<K$. Further, since $m\ge1$, one can easily see that $\delta^{m+2}\le K^{m+2}\le\delta K T^{m}$. Also, by \eqref{lower bound on ep}, $K\le1$ and so $\delta K\le T^{2}$. Thus, conditions \eqref{condition on K T} hold, and Theorem~\ref{new thm: BKM} is applicable. Hence, there exists $E_{B_0}$ such that 
$$\mu_{d}(\mathfrak{M}(\ep,t)\cap B_0)\leq E_{B_0} (e^{-3t/2} \ep^{-1/2-m})^{\alpha/(m+2)}\mu_d(B_0)\le E_{B_0} \left(\frac 1c\right)^{\alpha\frac{2m+1}{2m+4}}\mu_d(B_0),$$ hence the conclusion follows on taking $c$ large enough.

The moreover part of the statement follows from the definition of $\mathfrak{M}(\ep,t)$ and the Besicovitch covering theory, see \cite{BY} for a similar full argument.
\end{proof}
\begin{remark}\label{rem: just nondeg}
In the proof of Lemma~\ref{lem: minor is small}, using the original \textit{BKM estimate} \cite[Theorem 1.4]{BKM} instead of Theorem~\ref{new thm: BKM} gives the bound 
$$\mu_{d}(\mathfrak{M}(\ep,t)\cap B_0)\leq E_{B_0}(e^{-3t/2} \ep^{-1/2-(n-1)})^{\alpha_1},$$ for some (explicitly computable) $\alpha_1>0.$ Hence the same conclusion as in Lemma~\ref{lem: minor is small} can be obtained for
\begin{equation}\label{eq: just for nondeg}
    \ep> c\,e^{-\frac{3t}{2n-1}},\end{equation} without analyticity or $C^\infty$ assumption on $F$. 
\end{remark}

\subsection{Partitioning $B_0$} Let $B_0\subset \bU$ be as in Lemma~\ref{lem: minor is small}. Take a maximal (in number of elements) collection $\{\Delta_i\}$ of open disjoint cubes of side length  $\ep^{1/2}e^{-t/2}$ such that $\bigcup_{i} \Delta_i\subset \frac{1}{2}B_0$. It is readily seen that the number of different cubes $\Delta_i$ is 
\begin{equation}\label{eq4.5}
\# \{\Delta_i\}\ge c_d\ep^{-d/2}e^{td/2}\mu_d(B_0).
\end{equation}
We will call a cube $\Delta_i$ \textit{good} if $\Delta_i\cap \bU \setminus \mathfrak{M}(\ep, t)\neq \emptyset$. 
Define the number of good cubes: 
$$
K(\ep,t):=\# \{\Delta_i~|~\Delta_i\cap \bU \setminus \mathfrak{M}(\ep, t)\neq \emptyset\}.$$

\begin{lemma}\label{lem: mostly good}
Under the assumptions of Lemma~\ref{lem: minor is small}, for any $0<\kappa<c_d$ with $c_d$ as in \eqref{eq4.5}, for all $t$ large enough, and $0<\ep<1$ satisfying \eqref{lower bound on ep}, we have that
$$
    K(\ep,t)\ge(c_d-\kappa)\ep^{-d/2}e^{td/2} \mu_d(B_0).
$$
\end{lemma}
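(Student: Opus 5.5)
The plan is to count the bad cubes, i.e.\ those $\Delta_i$ entirely contained in $\mathfrak{M}(\ep,t)$, and subtract their number from the lower bound \eqref{eq4.5}. Since $\bigcup_i\Delta_i\subset\frac12 B_0\subset\bU$, a cube $\Delta_i$ fails to be good precisely when $\Delta_i\subset\mathfrak{M}(\ep,t)$. The cubes are open, disjoint, of side $\ep^{1/2}e^{-t/2}$, and contained in $B_0$. Hence the bad cubes are disjoint subsets of $\mathfrak{M}(\ep,t)\cap B_0$, and
\[
\#\{\text{bad }\Delta_i\}\cdot \ep^{d/2}e^{-td/2}\le \mu_d(\mathfrak{M}(\ep,t)\cap B_0).
\]

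Next I apply Lemma~\ref{lem: minor is small} with the given $\kappa$. Its constant $c$ depends on $\kappa$, so condition \eqref{lower bound on ep} has to be read with this $c$; the hypothesis of the present lemma is understood in that sense. This gives $\mu_d(\mathfrak{M}(\ep,t)\cap B_0)\le\kappa\mu_d(B_0)$, and therefore
\[
\#\{\text{bad }\Delta_i\}\le\kappa\,\ep^{-d/2}e^{td/2}\mu_d(B_0).
\]
Subtracting this from \eqref{eq4.5} yields
\[
K(\ep,t)\ge (c_d-\kappa)\ep^{-d/2}e^{td/2}\mu_d(B_0).
\]

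The only point needing care is \eqref{eq4.5} itself: the maximal packing of $\frac12B_0$ by cubes must capture a fixed proportion $c_d$ of $\mu_d(B_0)$ uniformly in the side length. This uses that the side length $\ep^{1/2}e^{-t/2}\le e^{-t/2}$ is small compared with the radius of $B_0$. That is exactly why the lemma requires $t$ to be large, with the threshold depending on $B_0$. So the argument is essentially a direct measure comparison, and the main thing to get right is the dependence of the constant $c$ in \eqref{lower bound on ep} on $\kappa$.
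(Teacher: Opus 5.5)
Your proposal is correct and follows the paper's proof. Non-good cubes lie inside $\mathfrak{M}(\ep,t)$, and disjointness bounds their number by $\mu_d(\mathfrak{M}(\ep,t)\cap B_0)$ divided by the volume of one cube; Lemma~\ref{lem: minor is small} then gives at most $\kappa\ep^{-d/2}e^{td/2}\mu_d(B_0)$ bad cubes, and subtracting this from \eqref{eq4.5} finishes, with your reading of the $\kappa$-dependent constant $c$ in \eqref{lower bound on ep} being the intended meaning of ``under the assumptions of'' that lemma.
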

\begin{proof}
First, observe that if $\Delta_i$ is not \textit{good} then $\Delta_i\subset\mathfrak{M}(\ep, t)$. Since the cubes $\Delta_i$ are disjoint, the number of cubes that are not good is bounded by the measure of $\mathfrak{M}(\ep, t)$ divided by the measure of a single cube. Then, by Lemma~\ref{lem: minor is small} this number 
$$
\le \kappa \ep^{-d/2}e^{td/2}\mu_d(B_0) .
$$
Hence, using \eqref{eq4.5} we get that
$$
K(\ep,t)\ge c_d\ep^{-d/2}e^{td/2}\mu_d(B_0) - 
\kappa \ep^{-d/2}e^{td/2}\mu_d(B_0) =  (c_{d}- \kappa)\ep^{-d/2}e^{td/2}\mu_d(B_0)
$$
as required.
\end{proof}

The following lemma can be viewed as a modification of \cite[Lemma~2.1]{BVVZ21}.

\begin{lemma}\label{lem: one good point}
   Suppose $B_0\subset \bU$ be as in Lemma~\ref{lem: minor is small}, then for any  
    cube $\Delta\subset B_0$ of side length $\ep^{1/2} e^{-t/2}$,
    and for any $c_2\ge2$,
$\bU\setminus \mathfrak{M}_0(\ep,t)\cap \Delta$ is a subset of    
\begin{equation}\label{eq4.8++}
\left\{\bx\in \Delta~\Big|~\exists (q, \ba, \bb)\in \Z^{n+1}\; \begin{aligned}
&\textstyle \vert q f_{j}(\bx)-b_j-\sum\nolimits_{i=1}^d(qx_i-a_i)\partial_{i}f_{j}(\bx)\vert\leq \ep,\\
& \Vert q\bx-\ba\Vert \leq \ep^{1/2} e^{t/2},\\
& (c_2-1)e^t \leq  q \leq  (c_2+1) e^t
\end{aligned}\right\}.
\end{equation}
\end{lemma}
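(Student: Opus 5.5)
The plan is to use the fact that if $\bx\notin\mathfrak{M}_0(\ep,t)$, then all successive minima of the lattice $\Lambda_\bx:=gu_1(\bx)\Z^{n+1}$ are small. A lattice with small $\lambda_{n+1}$ has small covering radius, so every point of $\R^{n+1}$, in particular a suitably chosen target vector, lies close to some point of $\Lambda_\bx$. Unwinding the definitions of $g$ and $u_1(\bx)$ will then give the three inequalities in \eqref{eq4.8++}.

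\emph{Step 1 (lattice geometry).} Fix $\bx\in\Delta\cap\bU\setminus\mathfrak{M}_0(\ep,t)$. By definition
$$\lambda_{n+1}(\Lambda_\bx)<(n+1)^{-3/2}.$$
I will invoke the standard bound on the covering radius of a lattice $\Lambda\subset\R^{n+1}$:
$$\mu_2(\Lambda)\le\tfrac12\Big(\sum_{i=1}^{n+1}\lambda_i(\Lambda)^2\Big)^{1/2}\le\tfrac{\sqrt{n+1}}{2}\,\lambda_{n+1}(\Lambda).$$
This follows from Babai's rounding (nearest-plane) argument applied to linearly independent vectors realising the successive minima. Since the paper defines the $\lambda_i$ with respect to the unit ball (possibly the sup-norm ball), I will pass between norms using $\|\cdot\|\le\|\cdot\|_2\le\sqrt{n+1}\,\|\cdot\|$. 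This conversion costs at most one extra factor $\sqrt{n+1}$, and that factor is exactly what the exponent $3/2$ absorbs. The outcome is that every $\by\in\R^{n+1}$ has a point $\bz\in\Lambda_\bx$ with
$$\|\by-\bz\|\le\|\by-\bz\|_2\le\tfrac{\sqrt{n+1}}{2}\cdot\sqrt{n+1}\cdot(n+1)^{-3/2}<1.$$
The main obstacle is getting these norm constants consistent. If the crude bound falls short by a constant, I would instead argue directly: take $n+1$ independent lattice vectors $\bv_i$ with $\|\bv_i\|_2\le\lambda_{n+1}$ in Euclidean norm, write $\by=\sum\theta_i\bv_i$, and round each $\theta_i$ to the nearest integer. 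This gives an error at most $\tfrac12\sum\|\bv_i\|_2\le\tfrac{n+1}{2}\lambda_{n+1}$ relative to the sublattice they span, which is still $<1$ under the hypothesis.

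\emph{Step 2 (unwinding $u_1$).} Next I compute $gu_1(\bx)$ applied to an integer vector. I order the coordinates as $(b_m,\dots,b_1,a_d,\dots,a_1,q)$, matching the layout of \eqref{def_u_1}, and replace $(\ba,\bb)$ by $(-\ba,-\bb)$. Reading off the rows of \eqref{def_u_1}:
\begin{itemize}
\item the first $m$ coordinates become $\ep^{-1}\big(qf_j(\bx)-b_j-\sum_{i=1}^d(qx_i-a_i)\partial_if_j(\bx)\big)$, up to sign;
\item the next $d$ coordinates become $\ep^{-1/2}e^{-t/2}(qx_i-a_i)$, up to sign;
\item the last coordinate is $e^{-t}q$.
\end{itemize}

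\emph{Step 3 (choice of target).} Take $\by=(0,\dots,0,c_2)$. Step 1 provides $(q,\ba,\bb)\in\Z^{n+1}$ whose image under $gu_1(\bx)$ lies within sup-distance $<1$ of $\by$. Coordinatewise, this says:
\begin{itemize}
\item $\big|qf_j(\bx)-b_j-\sum_{i=1}^d(qx_i-a_i)\partial_if_j(\bx)\big|<\ep$ for each $j$;
\item $\|q\bx-\ba\|<\ep^{1/2}e^{t/2}$;
\item $|e^{-t}q-c_2|<1$, that is, $(c_2-1)e^t<q<(c_2+1)e^t$.
\end{itemize}
The hypothesis $c_2\ge2$ guarantees $q\ge e^t>0$, so the sign of $q$ is as required. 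Since $\bx\in\Delta$, this places $\bx$ in the set \eqref{eq4.8++}, which proves the inclusion.
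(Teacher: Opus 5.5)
Your proposal is correct and is essentially the paper's proof. The paper's vector $\bw=c_2e^t(-f(\bx),-\bx,1)$ (in the reversed coordinate order) satisfies $gu_1(\bx)\bw=(0,\dots,0,c_2)^T$, which is exactly your target, and the paper finds a nearby lattice point $gu_1(\bx)\bu$ by your fallback argument: it rounds the coefficients of the target with respect to $n+1$ short linearly independent lattice vectors, then reads off the coordinates as in your Step 3. Quoting the covering-radius bound $\frac12(\sum\lambda_i^2)^{1/2}$ is just a packaged form of that rounding step, and it gives strict inequalities.
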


\begin{proof}
Let $\bx\in (\bU\setminus \mathfrak{M}_0(\ep, t)) \cap \Delta$.  Then $$\lambda_{n+1}(g u_1(\bx)\Z^{n+1})\le \frac1{(n+1)^{3/2}}.
$$
This implies there exists linearly independent vectors $\bv_1,\dots,\bv_{n+1}\in g u_1(\bx)\Z^{n+1} $ lying in the ball centred at $\vv0$ of radius $(n+1)^{-3/2}$. Thus, 
\begin{equation}\label{eq4.6+}
\|\bv_i\|\le \sqrt{n+1}\|\bv_i\|_2\le(n+1)^{-1}
\end{equation}
for each $i$. Define the vector
$$ \bw:=w_0(
          -f_{m}(\bx),
           \dots,
           -f_{1}(\bx),
           -x_{d},
           \dots,
           -x_1,
           1)^T \in\R^{n+1}, \quad \text{where } w_0= c_2e^{t}.
$$
Let $\eta_1,\dots,\eta_{n+1}$ be the real parameters obtained by solving the system 
$$g u_1(\bx)\bw=\sum_{i=1}^{n+1} \eta_i \bv_i.
$$
Their existence follows from the linear independence of $\bv_1,\dots,\bv_{n+1}$. Let $t_i\in \Z$ be such that $\vert \eta_i-t_i\vert \leq 1$ and let $\bu=(-b_m, \dots,-b_{1},-a_d,\dots, -a_{1}, q)^T\in \Z^{n+1}$ be defined from 
$$ 
g u_1(\bx)\bu=\sum_{i=1}^{n+1} t_i \bv_i.$$
Let $\ba=(a_1,\dots,a_d)$ and $\bb=(b_1,\dots,b_m)$. From the definition of $\eta_i$ and \eqref{eq4.6+} we get that 
\begin{equation}\label{eq4.7}
\Vert gu_1(\bx)(\bu-\bw)\Vert=\Vert \sum_{i=1}^{n+1} (t_i-\eta_i) \bv_i\Vert \leq 1 .
\end{equation}
Using the last coordinates of the vector on the left of \eqref{eq4.7} we get that
$$\begin{aligned}
& \vert q-w_0\vert\leq e^{t}.\end{aligned}
$$
Hence, since $w_0=c_2w^t$ and $c_2\ge2$ we get that 
\begin{equation}\label{eq4.8}
e^t\le (c_2-1)e^t\leq q\leq (c_2+1)e^t.
\end{equation}

Using the $d$ coordinates next to the last last of \eqref{eq4.7} we get that
$$
\ep^{-1/2} e^{-t/2}\vert (-a_i+w_0 x_i)+ (q-w_0)x_i\vert\leq 1,\quad(1\le i\le  d)
$$
and so
\begin{equation}\label{eq4.9}
\Vert q\bx-\ba\Vert \leq \ep^{1/2} e^{t/2}.
\end{equation}
Now from the first $m$ coordinates of \eqref{eq4.7}, we get that 
$$
\vert (-b_j+w_0 f_{j}(\bx))-\sum_{i=1}^d (-a_i+w_0x_i)\partial_{i}f_{j}(\bx)+(q-w_0)(f_{j}(\bx)-\sum_i \partial_{i}f_{j}(\bx)x_i)\vert\leq \ep
$$
and therefore for each $1\le j\le m$ we have that
$$
\vert q f_{j}(\bx)-b_j-\sum_{i=1}^d(qx_i-a_i)\partial_{i}f_{j}(\bx)\vert\leq \ep.
$$
This completes the proof.
\end{proof}

\begin{remark}
Note that the above lemma guarantees that for any analytic non-degenerate manifold there exists a rational point of denominator $\ll e^t$ lying $\ll \epsilon e^{-t}$ close to the manifold. However, it does not have the primitivity of $(q,\ba,\bb)$ and hence we cannot derive the desired lower bound, nor we can even ensure that the aforementioned rational point has denominator $\asymp e^t$. In the next proposition we resolve these issues. The proof  will require the following axillary statement from analytic number theory, which is well known is one form on another, for example see \cite[Theorem~3.9]{MR434929}. 
\end{remark}

\begin{lemma}\label{gcd}
Let $0\le x_0<x$, $0\le y_0 \le y$, $x\ge2$ and $y\ge2$. Let $N(x,y;x_0,y_0)$ be the number of coprime integer pairs $(a,b)$ such that $x_0 < a \le x$, $y_0 < b \le y$.
Then,
$$
N(x,y;x_0,y_0)=\frac{6}{\pi^2}(x-x_0)(y-y_0)+O\big((x+y)\min\{\log x,\log y\}\big),
$$
where the implied constant is absolute.
\end{lemma}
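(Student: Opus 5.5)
The plan is the classical Möbius inversion argument, made uniform over the box $(x_0,x]\times(y_0,y]$. Write $\mathbf 1_{\gcd(a,b)=1}=\sum_{k\mid \gcd(a,b)}\mu(k)$. Then
\[
N(x,y;x_0,y_0)=\sum_{k\ge1}\mu(k)\,\#\{x_0<a\le x,\ y_0<b\le y:\ k\mid a,\ k\mid b\}.
\]
Assume without loss of generality that $x\le y$ (the statement is symmetric after swapping roles). Then $\min\{\log x,\log y\}=\log x$. Any pair $(a,b)$ counted with $k\mid a$ and $a\ge1$ forces $k\le x$. The case $a=0$ does not occur because $a>x_0\ge0$. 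So the sum runs over $k\le x$.

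For each $k$, the number of multiples of $k$ in $(x_0,x]$ is $(x-x_0)/k+\theta_1$ with $|\theta_1|\le1$. Similarly the number in $(y_0,y]$ is $(y-y_0)/k+\theta_2$ with $|\theta_2|\le 1$. Multiplying out, the $k$-th term equals
\[
\frac{(x-x_0)(y-y_0)}{k^2}+O\Big(\frac{x+y}{k}\Big)+O(1).
\]
Summing over $k\le x$ with $|\mu(k)|\le1$ gives three contributions. The main terms give $(x-x_0)(y-y_0)\sum_{k\le x}\mu(k)k^{-2}$. The middle error terms give $O((x+y)\log x)$, since $\sum_{k\le x}1/k\ll\log x$ for $x\ge2$. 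The $O(1)$ terms give $O(x)$, which is absorbed into the middle error.

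Finally, complete the Dirichlet series. We have $\sum_{k\ge1}\mu(k)k^{-2}=1/\zeta(2)=6/\pi^2$, and the tail satisfies $\sum_{k>x}k^{-2}\ll 1/x$. Hence replacing the truncated sum by $6/\pi^2$ costs $O\big((x-x_0)(y-y_0)/x\big)=O(y)$, using $x-x_0\le x$ and $y-y_0\le y$. This too is $O((x+y)\log x)$. Every implied constant that appears is absolute, which gives the claimed asymptotic.

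The only point needing care is the final step. The tail must be controlled by the relative size of $x$ against the box side $x-x_0$, and not just by $x-x_0$; this is why the error is stated in terms of $x+y$ rather than the box sides. I also need to check that restricting $k\le x$ is valid uniformly when $x_0>0$, which holds because $a\ge1$. I do not expect any genuine obstacle beyond this bookkeeping.
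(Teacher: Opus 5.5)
Your argument is correct, and every constant in it is indeed absolute. It rests on the same M\"obius identity as the paper's proof, but the bookkeeping is organised differently.

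The paper assumes $y\le x$ and fixes the second coordinate $q\in(y_0,y]$. It writes the count as $\sum_{y_0<q\le y}\bigl(\varphi(x;q)-\varphi(x_0;q)\bigr)$ with $\varphi(x;q)=\sum_{d\mid q}\mu(d)[x/d]$. It then replaces $[x/d]-[x_0/d]$ by $(x-x_0)/d+O(1)$, at a cost of $O\bigl(\sum_{q\le y}d(q)\bigr)=O(y\log y)$. Finally it quotes the asymptotic $\sum_{q\le y}\varphi(q)/q=\frac{6}{\pi^2}y+O(\log y)$ from Apostol.

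Interchanging the order of summation in the paper's expression gives exactly your sum $\sum_k\mu(k)\,\#\{k\mid a\}\,\#\{k\mid b\}$. You approximate both interval counts at once and truncate at $k\le\min\{x,y\}$. As a result you need only the harmonic sum and the tail bound $\sum_{k>x}k^{-2}\ll 1/x$. Your version is therefore self-contained: it avoids both the divisor-sum estimate and the $\varphi(q)/q$ asymptotic, whose standard proof is essentially your computation in one variable.

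Both routes yield the same error term. Your remark that the truncation must be at $x$ rather than at the box side $x-x_0$ is precisely why the error depends on $x+y$.
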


\begin{proof}
This is a relatively straightforward generalisation of \cite[Theorem~3.9]{MR434929}. For completeness we provide a proof. Without loss of generality we can assume that $y\le x$. For any integer $q\ge 1$ and $x\ge1$, let
$$
\varphi(x;q):=\#\{1\le k\le x\;\;\vert\;\gcd(k,q)=1\}.
$$
Then, by \cite[Exercise~9, Chapter~2]{MR434929},
$$
\varphi(x;q)=\sum_{d\mid q}\mu(d)\left[\frac{x}{d}\right].
$$
Consequently,
$$
N(x,y;x_0,y_0)=\sum_{y_0<q\le y}\Big(\varphi(x;q)-\varphi(x_0;q)\Big)
=\sum_{y_0<q\le y}\sum_{d\mid q}\mu(d)\Big(\left[\frac{x}{d}\right]-\left[\frac{x_0}{d}\right]\Big)
$$
$$
=(x-x_0)\sum_{y_0<q\le y}\sum_{d\mid q}\frac{\mu(d)}{d}+O\left(\sum_{1\le q\le y}d(q)\right)=
$$
by \cite[Theorem~2.3 and Theorem~3.3]{MR434929}
$$
=(x-x_0)\sum_{y_0<q\le y}\frac{\varphi(q)}{q}+O\left(y\log y\right)=
$$
by \cite[Exercise~5, Chapter~3]{MR434929}
$$
=(x-x_0)\left(\frac{6}{\pi^2}(y-y_0)+O(\log y)\right)+O\left(y\log y\right)
$$
$$
=\frac{6}{\pi^2}(x-x_0)(y-y_0)+O\left((x+y)\log y\right).
$$
Since $y\le x$, $\log y=\min\{\log x,\log y\}$ and the above completes the proof.
\end{proof}

\medskip

We will also use the following lemma on properties of a reduced basis of a lattice.

\begin{lemma}\label{reduced_basis}
Let $\Lambda$ be a lattice in $\R^{n+1}$ and $\lambda_1,\dots,\lambda_{n+1}$ denote the successive minima of $\Lambda$ with respect to the unit ball. Then there exists a basis $\bb_1,\dots,\bb_{n+1}$ of $\Lambda$ and a constants $k_0,\dots,k_{n+1}>0$ depending on $n$ only such that
\begin{equation}\label{lattice1}
\lambda_i\le\|\bb_i\|\le k_i\lambda_i\qquad(1\le i\le n+1)
\end{equation}
and for any $\bu\in\Lambda$ we have that
\begin{equation}\label{norm1}
k_0\sum_{i=1}^{n+1}|u_i|\|\bb_i\| \le \|\bu\|\le \sum_{i=1}^{n+1}|u_i|\|\bb_i\|,
\end{equation}
where $u_i$ are the coordinates of $\bu$ in this basis, that is the unique integers such that
\begin{equation}\label{basis2}
\bu=\sum_{i=1}^{n+1}u_i\bb_i.
\end{equation}
\end{lemma}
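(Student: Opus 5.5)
We prove Lemma~\ref{reduced_basis} by taking a reduced basis in the sense of Minkowski, or more conveniently a basis built from a system of successive minima, and then comparing norms via the Hadamard/volume inequality. We will not aim for optimal constants.

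\textbf{Step 1: choose the basis.} Let $\bw_1,\dots,\bw_{n+1}\in\Lambda$ be linearly independent with $\|\bw_i\|=\lambda_i$. These need not form a basis of $\Lambda$. By the classical lemma (Cassels, \emph{Geometry of Numbers}, Lemma~8 in Ch.~V, or Mahler's theorem), there is a basis $\bb_1,\dots,\bb_{n+1}$ of $\Lambda$ with $\Span_\R(\bb_1,\dots,\bb_i)=\Span_\R(\bw_1,\dots,\bw_i)$ for every $i$, and
$$
\bb_i=\sum_{j\le i}c_{ij}\bw_j,\qquad |c_{ij}|\le1 .
$$
This basis is obtained by completing the primitive flags step by step as in Lemma~\ref{primitive}(ii), and then size-reducing each new vector modulo the previous ones. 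Consequently
$$
\|\bb_i\|\le\sum_{j\le i}\lambda_j\le i\lambda_i .
$$
For the lower bound, note that $\bb_1,\dots,\bb_i$ are $i$ linearly independent lattice vectors, so $\max_{j\le i}\|\bb_j\|\ge\lambda_i$. This alone does not give $\|\bb_i\|\ge\lambda_i$ for each $i$. It does hold, however: $\bb_i\notin\Span(\bw_1,\dots,\bw_{i-1})$, and any lattice vector outside $\Span(\bw_1,\dots,\bw_{i-1})$, which is the span of the first $i-1$ minima, has norm at least $\lambda_i$. Together these give \eqref{lattice1} with $k_i=i$.

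\textbf{Step 2: the upper bound in \eqref{norm1}.} This is just the triangle inequality applied to \eqref{basis2}.

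\textbf{Step 3: the lower bound in \eqref{norm1}.} This is the main step. Write $\bu=\sum u_i\bb_i$ and fix an index $i$. By Cramer's rule, $|u_i|$ equals $\|\bb_1\wedge\dots\wedge\bu\wedge\dots\wedge\bb_{n+1}\|_2/\|\bb_1\wedge\dots\wedge\bb_{n+1}\|_2$, where $\bu$ replaces $\bb_i$. We bound the two factors as follows.

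The numerator is at most $\|\bu\|_2\prod_{j\ne i}\|\bb_j\|_2$ by Hadamard's inequality.

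The denominator equals $\det\Lambda$. By Minkowski's second theorem, $\det\Lambda\gg_n\prod_j\lambda_j$, and Step~1 gives $\prod_j\lambda_j\gg_n\prod_j\|\bb_j\|$.

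Hence $|u_i|\,\|\bb_i\|\ll_n\|\bu\|_2\ll_n\|\bu\|$, with the norm equivalences on $\R^{n+1}$ contributing only factors depending on $n$. Summing over $i$ gives
$$
\sum_i|u_i|\,\|\bb_i\|\le k_0^{-1}\|\bu\|
$$
for a suitable constant $k_0=k_0(n)>0$.

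The only genuinely nontrivial inputs are the existence of the basis in Step~1 and Minkowski's second theorem. Both are classical, so we will cite them rather than reprove them.
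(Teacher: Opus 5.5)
Your proof is correct and follows the paper's route: a Cassels-type basis with $\lambda_i\le\|\bb_i\|\ll\lambda_i$, then Minkowski's second theorem to get $\prod_i\|\bb_i\|\asymp\det\Lambda$, after which your Cramer-plus-Hadamard bound on $|u_i|$ is exactly the paper's observation that the rescaled matrix $\tilde B$ has bounded entries and determinant $\asymp 1$, hence a bounded inverse. The one fact you assert without proof (that lattice vectors outside $\Span(\bw_1,\dots,\bw_{i-1})$ have norm at least $\lambda_i$) is true, but when $\lambda_{i-1}=\lambda_i$ adjoining the vector to $\bw_1,\dots,\bw_{i-1}$ gives nothing; instead use $\bw_1,\dots,\bw_{j-1}$, where $j$ is the least index with $\lambda_j=\lambda_i$.
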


The construction of a basis as above falls into the reduction theory and is well known. For completeness, we provide a proof of the lemma using the most basic approach which is not optimizing the constants $k_i$ in any way.

\begin{proof}
By the definition of successive minima, there is a collection of linearly independent vectors $\ba_1,\dots,\ba_{n+1}\in\Lambda$ such that $\|\ba_i\|_2=\lambda_i$, see \cite[Lemma~1 on p.\;204]{MR1434478}. The existence of a basis satisfying \eqref{lattice1} is then an immediate consequence of \cite[Theorem~1(B) and Corollary~1 of \S{}I.2.2]{MR1434478}.

Regarding \eqref{norm1}, the right hand side is simply the triangle inequality applied to \eqref{basis2}. The converse follows from Minkowski's second theorem on successive minima, which together with \eqref{lattice1} implies that $\|\bb_1\|\cdots\|\bb_{n+1}\|\asymp\det(\Lambda)$ with the implied constants depending on $n$. Indeed, \eqref{basis2} can be rewritten as
\begin{equation}\label{new_eq}
\bu=\sum_{i=1}^{n+1}\tilde u_i\tilde \bb_i
\end{equation}
with $\tilde u_i=\lambda_iu_i$ and $\tilde \bb_i=\lambda_i^{-1}\bb_i$. Letting $\tilde\bu:=(\tilde u_1,\dots,\tilde u_{n+1})^T$ and $\tilde B:=[\tilde\bb_1,\dots,\tilde\bb_{n+1}]$, where $\tilde\bb_i$ are the columns of $\tilde B$, we get from \eqref{new_eq} that
\begin{equation}\label{neq_eq2}
\tilde\bu=\tilde B^{-1}\bu.
\end{equation}
By \eqref{lattice1}, every entry of $\tilde B$ is $\ll1$ in absolute value. Since we also have that 
$$|\det\tilde B|=\frac{|\det[\bb_1,\dots,\bb_{n+1}]|}{\lambda_1\dots\lambda_{n+1}}=\frac{\det(\Lambda)}{\lambda_1\dots\lambda_{n+1}}\asymp1,
$$
the entries of $\tilde B^{-1}$ are also $\ll1$. Hence, by \eqref{neq_eq2}, 
$$
\sum_i|u_i|\|\bb_i\|\asymp\max\{|u_i|\lambda_i\}\asymp\|\tilde\bu\|\ll \|\bu\| 
$$
whence the left hand side of \eqref{norm1} readily follows.
\end{proof}

\medskip

\begin{proposition}\label{prop: count on major}
There is $t_0$ such that for $t>t_0$, any $\ep\in(0,1)$ satisfying \eqref{lower bound on ep}, and any good $\Delta_i$
$$
N(\Delta_i;\ep,t)\gg_{n,d, F}  \ep^m e^{t(d+1)} \left(\frac{\ep}{e^t}\right)^{d/2}.$$
\end{proposition}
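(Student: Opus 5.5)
Fix a good cube $\Delta=\Delta_i$ and a point $\bx\in\Delta\cap\bU\setminus\mathfrak{M}(\ep,t)$. By the definition of $\mathfrak{M}$ as the $\ep^{1/2}e^{-t/2}$-thickening of $\mathfrak{M}_0$, the whole ball $B(\bx,\ep^{1/2}e^{-t/2})$ misses $\mathfrak{M}_0(\ep,t)$. Hence every point $\by$ of this ball, and in particular a sub-cube $\Delta'\subset\Delta$ of comparable side around $\bx$, satisfies $\lambda_{n+1}(gu_1(\by)\Z^{n+1})\le (n+1)^{-3/2}$.

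Instead of the crude rounding in Lemma~\ref{lem: one good point}, I will use Lemma~\ref{reduced_basis} applied to $\Lambda=gu_1(\bx)\Z^{n+1}$. All its successive minima are $\le(n+1)^{-3/2}$, so the reduced basis $\bb_1,\dots,\bb_{n+1}$ has all $\|\bb_i\|\ll1$. Moreover $\det\Lambda=\ep^{-m}\ep^{-d/2}e^{-td/2}e^{-t}$, so $\prod\|\bb_i\|\asymp \ep^{-m-d/2}e^{-t(d/2+1)}$.

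Now count lattice points $\bu=\sum u_i\bb_i$ in the box $\mathcal{B}=\{\bv:\|\bv-gu_1(\bx)\bw\|\le 1\}$, with $\bw$ as in Lemma~\ref{lem: one good point} (with $w_0=c_2e^t$). By \eqref{norm1}, $\mathcal{B}$ contains every integer combination with $|u_i-\eta_i|\le k_0'/\|\bb_i\|$ for a small constant $k_0'$. Since $\|\bb_i\|\ll 1$, each such range has length $\gg1/\|\bb_i\|\ge1$, so the number of points is
\[
\gg\prod_i\|\bb_i\|^{-1}\asymp\ep^{m+d/2}e^{t(d/2+1)}=\ep^m e^{t(d+1)}(\ep/e^t)^{d/2}.
\]
The computation in the proof of Lemma~\ref{lem: one good point} shows that each such point gives $(q,\ba,\bb)$ with $q\asymp e^t$, $\|q\bx-\ba\|\le\ep^{1/2}e^{t/2}$, and the linearised inequality $\le\ep$. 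Taylor's formula with \eqref{eq: bound on derivatives} then yields
\[
|qf_j(\ba/q)-b_j|\le \ep+Mq\|\bx-\ba/q\|^2\ll\ep,
\]
and $\ba/q$ lies within $\ep^{1/2}e^{-t/2}$ of $\bx$. Rescaling constants (replacing $\ep$ by a constant multiple, and choosing $c_2$ and the cube appropriately so that $e^t\le q<e^{t+1}$ and $\ba/q\in\Delta$) makes these points count toward $\mathcal N(\Delta;\ep,t)$ up to constants. Shrinking the box by a constant still leaves $\gg$ the volume count, because all sides remain $\gg1$ in lattice coordinates.

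\textbf{The main obstacle is primitivity.} A positive proportion of the counted $(q,\ba,\bb)$ must be primitive, and I plan to get this from Lemma~\ref{gcd}. I will pick two reduced-basis directions, say $\bb_1,\bb_2$, whose coordinate ranges are long (lengths $X_1,X_2\gg1$), fix the other coordinates $u_3,\dots,u_{n+1}$, and vary $(u_1,u_2)$ in a rectangle. If $\gcd(u_1,u_2)=1$ and the vectors $\bb_1,\bb_2$ extend to a basis of the lattice, then $\bu$ is primitive, since the coordinates in a $\Z$-basis detect primitivity. Lemma~\ref{gcd} then gives a proportion $6/\pi^2$ of such pairs, provided the ranges have length $\ge2$, with error $O((X_1+X_2)\log)$ that is negligible once the ranges are large. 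The edge case where all ranges have bounded length, i.e.\ $\|\bb_i\|\asymp1$, is delicate. There I would instead enlarge the box by a constant factor so that two ranges have length $\ge$ a large constant, which changes counts only by constants. Finally, distinct $\bu$ give distinct triples. Summing over the fixed coordinates then recovers the bound $\gg\ep^m e^{t(d+1)}(\ep/e^t)^{d/2}$ for primitive triples, which proves the proposition.
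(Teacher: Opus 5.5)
Your proposal is correct and follows essentially the same route as the paper: a reduced basis of $gu_1(\bx)\Z^{n+1}$, lattice points in a box around the scale-$c_2$ point $gu_1(\bx)\bw$ (the paper instead translates the cube $[-c_1,c_1]^{n+1}$ by a lattice point $\bs$), Taylor's formula, and primitivity via coprimality of two basis coordinates using Lemma~\ref{gcd}. The one adjustment is to enlarge the box by a large constant $R$ from the outset, with the centre scaled proportionally (the paper's $c_2=5c_1$), rather than only when all ranges are bounded: you need every coordinate range to contain integers and two of them to be long, and the error term in Lemma~\ref{gcd} depends on the endpoints $x,y$, which stay $\ll$ the range lengths only when $c_2\asymp R$.
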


\begin{proof}
Let $\bx_0$ be the centre of $\Delta_i$, 
$$
\Lambda=g u_1(\bx_0)\Z^{n+1},
$$
and $\lambda_1\le\dots\le\lambda_{n+1}$ be the successive minima of $\Lambda$ with respect to the unit ball. Let $\bb_1,\dots,\bb_{n+1}$ be the (reduced) basis arising from Lemma~\ref{reduced_basis}. 

Since $\Delta_i$ is good, there exists $\bx\in\Delta_i\setminus \mathfrak{M}(\ep,t)$. In particular, $\Vert \bx-\bx_0\Vert\le \left(\frac{\ep}{e^t}\right)^{1/2}$. By the definition of $\mathfrak{M}(\ep,t)$, $\bx_0\notin \mathfrak{M}_0(\ep,t)$, which means that $\lambda_{n+1}<(n+1)^{-3/2}$. Hence,
\begin{equation}\label{eq: x_0 is good}
\lambda_1\le\dots\le\lambda_{n+1}<1.\end{equation} 
Let 
$$
\bs:=g u_1(\bx_0)(-b_m,\dots,-b_1,-a_d,\dots,-a_1,q)^T\in\Lambda,
$$ 
where $(q,\ba,\mathbf{b})$ arises from Lemma~\ref{lem: one good point}. Observe that $\|\bs\|\asymp c_2$, where the implied constants depend on $n$ and $F$. Expand $\bs$ in the above basis from Lemma~\ref{reduced_basis}:
\begin{equation}\label{eq:w}
\bs=\sum_{i=1}^{n+1} s_i \bb_i.
\end{equation}
By replacing $\bb_i$ with $-\bb_i$ if necessary we can assume without loss of generality that all $s_i$ are non-negative.
By \eqref{norm1} and the choice of $(q,\ba,\mathbf{b})$, we have that $\max_i|s_i|\lambda_i\asymp \|\bs\|\ll c_2$ and thus
\begin{equation}\label{sbound}
|s_i|\ll \lambda_i^{-1}c_2 \quad\text{for each $i$}.
\end{equation}
By \eqref{g} and Minkowski's second theorem, we have that 
\begin{equation}\label{det(g)}
\det(\Lambda)=\deg(gu_1(\bx_0))=\det(g)=\ep^{-m-d/2} e^{-(d/2+1)t}\asymp \lambda_1\cdots\lambda_{n+1}.
\end{equation}
Let $c_1:=\frac15c_2$ and assume that $c_1>1$. Define $B$ to be the cube $[-c_1,c_1]^{n+1}$.
Let $\bv\in\Lambda\cap B$. Then
$$
\bv=g u_1(\bx_0)(-b_m',\dots,-b_1,-a_d',\dots,-a_1',q')^T\in\Lambda\cap[-c_1,c_1]^{n+1}
$$
for some point $(-b_m',\dots,-b_1,-a_d',\dots,-a_1',q')\in\Z^{n+1}$. 
Using the explicit form of $g$ and $u_1(\bx_0)$ we get that
\begin{equation}\label{eq: q_1}
\begin{aligned}
    &\vert q' f_{j}(\bx_0)-b'_j-\sum_{i=1}^d(q'x_{0,i}-a'_i)\partial_{i}f_{j}(\bx_0)\vert\leq c_1\ep,\quad(1\le j\le m)\\
    & \Vert q'\bx_0-\ba'\Vert\leq c_1\ep^{1/2} e^{t/2},\\[1ex]
    & |q'|\leq c_1e^t,
\end{aligned}\end{equation}
where $\ba'=(a'_1,\dots,a'_d)$ and $\bb'=(b'_1,\dots,b'_m)$.
Furthermore, by the choice of $(q,\ba,\bb)$ made using Lemma~\ref{lem: one good point}, we also have that 
\begin{equation}\label{eq: q good}
\begin{aligned}
    &\vert q f_{j}(\bx_0)-b_j-\sum_{i=1}^d(qx_{0,i}-a_i)\partial_{i}f_{j}(\bx_0)\vert\leq \ep,\quad(1\le j\le m)\\
    & \Vert q\bx_0-\ba\Vert\leq \ep^{1/2} e^{t/2},\\[1ex]
    & (c_2-1)e^t\leq  q\leq (c_2+1)e^t.
\end{aligned}\end{equation}

Set $(q'',\ba'',\bb'')=(q',\ba',\bb')+ (q, \ba,\mathbf{b})$. Then combining \eqref{eq: q_1} and \eqref{eq: q good} gives
\begin{equation}\label{eq4.18}
\begin{aligned}
    &\vert q'' f_{j}(\bx_0)-b''_{j}-\sum_{i=1}^d(q''x_{0,i}-a''_{i})\partial_{i}f_{j}(\bx_0)\vert\leq 2c_1\ep,\\
    & \Vert q''\bx_0-\ba''\Vert\leq 2c_1\ep^{1/2} e^{t/2},\\[1ex]
    & (c_2-c_1-1)e^t\leq q''\leq (c_2+c_1+1)e^t.
\end{aligned}
\end{equation}
Here, $\ba''=(a_1'',\dots,a_d'')$ and $\bb''=(b_1'',\dots,b_m'')$.
Since $c_2=5c_1$ and $c_1>1$, we have that 
\begin{equation}\label{q}
3c_1e^t\le q''\le 7c_1e^t
\end{equation}
and therefore, by the middle inequality of \eqref{eq4.18},
$$
\|\bx_0-\ba''/q''\|\le (\ep e^{-t})^{1/2}.
$$
It follows that  $\ba''/q''\in\Delta_j$.

Using Taylor's expansion and \eqref{eq4.18} we get that 
\begin{equation}\label{ep}
\Vert q'' f(\ba''/q'')-\bb''\Vert\ll_{F, c_1} \ep.
\end{equation}
In view of \eqref{q} and \eqref{ep}, if $(q'',\ba'',\bb'')$ is primitive then
$$
(q'',\ba'',\bb'')\in \mathcal{N}(\Delta;\ep'',t'')\quad\text{with $e^{t''}=7c_1e^t$ and $\ep''\asymp_{n,F,c_1}\ep$},
$$
where $\mathcal{N}(\Delta;\ep'',t'')$ is defined as in \eqref{NN}.
Therefore, the statement of the proposition will follow (up to the  change of parameters $t\mapsto t''$, $\ep\mapsto\ep''$) if for a sufficiently large $c_1$ depending on $n$ and $F$ we can identify $\ep^m e^{(d+1)t}(\ep/e^t)^{1/2}$ primitive points $(q'',\ba'',\bb'')$ as above.

\bigskip

\noindent\textbf{Counting primitive points.} 
Recall that $(q'',\ba'',\bb'')=(q',\ba',\bb')+ (q, \ba,\mathbf{b})$. Since the primitivity of a point in a lattice is invariant under applying the linear transformation $gu_1(\bx_0)$, $(q'',\ba'',\bb'')$ is primitive if and only if 
$$
gu_1(\bx_0)(q',\ba',\bb')^T+ gu_1(\bx_0)(q, \ba,\mathbf{b})^T=\bv+\bs
$$ 
is primitive in $\Lambda$,
where 
$\bv\in B\cap\Lambda$,
$B=[-c_1,c_1]^{n+1}$, and $\bs$ is fixed at the beginning of the proof.
Write $\bv$ in the  basis $\bb_1,\dots,\bb_{n+1}$:
\begin{equation}\label{v}
\bv=\sum_{i=1}^{n+1} v_i \bb_i.
\end{equation}
Let 
\begin{equation}\label{eta_i}
1\le \eta_i \asymp c_1/\lambda_i,
\end{equation} 
where the implied constants depend on $n$. 
Then, by \eqref{norm1}, if we choose $(v_1,\dots,v_{n+1})\in\Z^{n+1}$ so that
\begin{equation}\label{range}
1\le v_i\le\eta_i,
\end{equation}
with parameters $\eta_i$ chosen by \eqref{eta_i} with suitably small implied constants, we can guarantee that $\bv\in B$. 
Further, by \eqref{eq:w} and \eqref{v},
\begin{equation}
\bv+\bs=\sum_{i=1}^{n+1} (v_i+s_i) \bb_i.
\end{equation}
Then, $\bv+\bs\in\Lambda$ is primitive if and only if
$\gcd(v_1+s_1,\dots,v_{n+1}+s_{n+1})=1$. For its primitivity it is sufficient that $\gcd(v_1+s_1,v_{2}+s_{2})=1$. Therefore, by \eqref{eta_i} and \eqref{range},
\begin{equation}\label{eq4.15}
\#\{\bv\in B\cap\Lambda:\bv+\bs\text{ is primitive}\}\ge N(x,y;x_0,y_0)\prod_{i=3}^{n+1}\eta_i
\end{equation}
where the implied constant depends on $n$ and $N(x,y;x_0,y_0)$ is as in Lemma~\ref{gcd} with
\begin{align*}
&x=s_1+\eta_1, && y=s_2+\eta_2;\\ 
&x_0=s_1,&& y_0=s_2.&
\end{align*}
By Lemma~\ref{gcd},
$$
N(x,y;x_0,y_0)\gg \eta_1\eta_2+O((\eta_1+s_1+\eta_2+s_2)\min\{\log(\eta_1+s_1),\log(\eta_2+s_2)\}). 
$$
Since $c_1\asymp c_2$, by \eqref{sbound}, \eqref{eta_i} and \eqref{range}, we have that $\eta_i+s_i\ll \eta_i$. Then, using \eqref{eta_i}, we get that
$$
N(x,y;x_0,y_0)\gg \eta_1\eta_2\gg \frac{c_1^2}{\lambda_1\lambda_2}
$$
provided that $c_1$ is sufficiently large (depending on $n$). Substituting this into   \eqref{eq4.15} and using 
\eqref{det(g)} together with \eqref{eta_i} we obtain that
\begin{align}
\nonumber\#\{\bv\in B\cap\Lambda:\bv+\bs\text{ is primitive}\}&\gg \frac{c_1^{n+1}}{\lambda_1\cdots\lambda_{n+1}}\asymp \frac{c_1^{n+1}}{\det(\Lambda)}\\ 
&\asymp_{c_1} \ep^{m+d/2} e^{(d/2+1)t}=\ep^m e^{t(d+1)} \left(\frac{\ep}{e^t}\right)^{d/2},\label{eq4.16}
\end{align}
and this completes the proof (note that $c_1$ depends on $n$ only).
\end{proof}

Combining Lemma~\ref{lem: mostly good} and Proposition~\ref{prop: count on major} gives the following theorem, which in turn verifies 
Theorem~\ref{thm: main} since any non-degenerate manifold $\cM$ can be parametrised locally by a map $F$ as above.

\begin{theorem}\label{thm: main for F}
Let $F$ be an analytic map of the form \eqref{eq: F},  nondegenerate at $\bx_0\in\bU$. Then there exists a ball $B_0$ centred at $\bx_0$ and constant $c=c(B_0, F, n, d)>1$ and a sufficiently large $t_0=t_0(n,F,B_0)$ such that for any $t\geq t_0$ and any $\ep\in(0,1)$ satisfying \eqref{lower bound on ep} 
$$
N(B_0; \ep, t)\gg_{n, F}  \ep^m e^{t(d+1)} \mu_d(B_0). 
$$
\end{theorem}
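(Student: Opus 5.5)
The plan is to sum the local counts of Proposition~\ref{prop: count on major} over the good cubes of the partition of $B_0$, using Lemma~\ref{lem: mostly good} to bound the number of good cubes from below. First fix $B_0$ as in Lemma~\ref{lem: minor is small} and choose $\kappa=c_d/2$. This gives the constant $c=c(B_0,F,n,d)$ in \eqref{lower bound on ep}; enlarge $c$ if necessary so that the hypotheses of Lemma~\ref{lem: mostly good} and Proposition~\ref{prop: count on major} also hold. Take $t_0$ to be the maximum of the thresholds appearing in those two results.

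For $t\ge t_0$ and $\ep$ satisfying \eqref{lower bound on ep}, the cubes $\Delta_i\subset\frac12B_0$ are pairwise disjoint. By the additivity of $N(\cdot;\ep,t)$ over disjoint sets we therefore get
\[
N(B_0;\ep,t)\ge\sum_{i}N(\Delta_i;\ep,t)\ge\sum_{\Delta_i\ \text{good}}N(\Delta_i;\ep,t).
\]
Proposition~\ref{prop: count on major} bounds each summand below by $\gg\ep^m e^{t(d+1)}(\ep/e^t)^{d/2}$. Lemma~\ref{lem: mostly good} says there are at least $\frac{c_d}{2}\ep^{-d/2}e^{td/2}\mu_d(B_0)$ good cubes. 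Multiplying these, the factors $(\ep/e^t)^{d/2}$ cancel and we obtain $N(B_0;\ep,t)\gg\ep^m e^{t(d+1)}\mu_d(B_0)$.

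The main point needing care is the change of parameters inside Proposition~\ref{prop: count on major}. The points it produces actually satisfy $e^{t''}=7c_1e^t$ and $\ep''\asymp\ep$, so they are counted in $\mathcal N(\Delta;\ep'',t'')$ rather than in $\mathcal N(\Delta;\ep,t)$. I will handle this by applying the whole argument with $(\ep,t)$ replaced by $(\ep/C,\,t-\log(7c_1))$ for a suitable constant $C$ depending only on $n$ and $F$. After this shift the produced points fall in the right height window, up to the fact that the window width is $e$ rather than a constant; this is handled by splitting into boundedly many sub-windows or adjusting $c_1$. They also lie within $\ep$ of the manifold.

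This shift changes $\ep^me^{t(d+1)}$ only by a constant factor. It also preserves \eqref{lower bound on ep}, after enlarging $c$ by a constant. Two further checks are needed: disjoint cubes $\Delta_i$ yield distinct points, since $\ba''/q''\in\Delta_i$; and the rescaled cubes still sit inside $B_0$, which is automatic since they lie in $\frac12B_0$ and $t$ is large. Finally, Theorem~\ref{thm: main} follows by covering $\cM$ locally by such parametrisations and using $N_\cM(\ep,t)\ge N(B_0;\ep,t)$.
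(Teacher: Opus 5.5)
Your proposal is correct and follows the paper's argument exactly: the theorem is obtained by multiplying the lower bound on the number of good cubes from Lemma~\ref{lem: mostly good} (with $\kappa=c_d/2$) by the per-cube count of Proposition~\ref{prop: count on major}, using additivity of $N(\cdot;\ep,t)$ over the disjoint cubes. Your extra bookkeeping for the parameter change goes beyond what the paper writes, but needs two small fixes: since the constructed $q''$ lie in $[3c_1e^t,7c_1e^t]$ and $7/3<e$, the correct shift is $t\mapsto t-\log(3c_1)$ (shifting by $\log(7c_1)$ would place them in $[\tfrac37e^t,e^t]$, below the window), so no window splitting is needed; and since $\ba''/q''$ is only guaranteed to lie within $\tfrac23\ep^{1/2}e^{-t/2}$ of the centre of $\Delta_i$, exact distinctness across cubes should be replaced by bounded overlap of slightly enlarged cubes, which costs only a constant factor.
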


\begin{remark}\label{rem: count for more general}
   We used analyticity only in Lemma~\ref{lem: minor is small}. Thus in the view of Remark~\ref{rem: just nondeg}, we get the same conclusion as in Theorem~\ref{thm: main for F} in the range $\ep\gg e^{-3t/(2n-1)}$ for all non-nondegenerate manifolds, without assuming analyticity or even $C^\infty$. This itself yields a new result extending the results in \cite{SST} to all nondegenerate manifolds.
\end{remark}

\section{Further improvements}

In this section, we point towards how our method also yields an even better exponent than $\frac{3}{2m+1}$ in Theorem~\ref{thm: main} by imposing more geometric conditions beyond nondegeneracy. We begin by defining this condition. In what follows, as before, $\tilde F=(1,F)$.

\begin{definition}[$k$-high nondegeneracy]
Let $0\leq k\leq d-1$ be an integer. An analytic map $F:\bU\subset \R^d\to \R^n$ will be called \emph{$k$-high nondegenerate} at $\bx_0\in \bU$, if there exists a neighborhood $V$ of $\bx_0$ such that for any ball $B\subset V$, for any linearly independent vectors $\ba_1,\dots, \ba_{k+2}\in \R^{n+1}$
\begin{equation}\label{eq5.1}
\max_{\substack{I\subset\{1,\dots,d\}\\ \#I=k+1}}\; \sup_{\bx\in B} \left\vert \tilde{F}(\bx)\wedge\bigwedge_{i\in I} \partial_{i}\tilde{F}(\bx)\cdot (\ba_1\wedge\dots\wedge\ba_{k+2})\right\vert\neq 0,
\end{equation}
$F$ will be called \emph{$k$-high nondegenerate} if $F$ is $k$-high nondegenerate at almost every $\bx_0$.
\end{definition}
The above condition means that on a dense set of points $\bx$ in $V$, any $n-k$ dimensional subspace of $\R^{n+1}$ will have only trivial intersection with the subspace generated by $\tilde F$ and some (suitably chosen) $k+1$ directional derivatives. 

\begin{remark} The usual definition of nondegeneracy of $F$ stated in section~\ref{sec: notation} implies $0$-high nondegeneracy by Lemma~\ref{lem:crucial nabla} and the fact that 
$$
\Vert \tilde\nabla (\tilde F(\bx)\cdot\ba, \tilde F(\bx)\cdot\mathbf{b})\Vert=\max_{i=1}^d \Vert (\tilde F(\bx)\wedge \partial_{i}\tilde F(\bx))\cdot (\ba\wedge \mathbf{b})\Vert.
$$
\end{remark}
\begin{remark}
To understand the above definition in the case of the largest $k=d-1$, let $T_{\bx}$ be $d+1$ dimensional subspace generated by $\tilde F(\bx), \partial_1\tilde F(\bx), \dots, \partial_{d}\tilde F(\bx)$, i.e. tangent space to the cone $$\{(r, rF(\bx))~|~r>0, \bx\in \bU\}.$$ Then we can restate the $(d-1)$-high nondegeneracy at $\bx_0$ as the existence of a neighbourhood $V\ni\bx_0$ such that for any ball $B\subset V$, for any $(n-d)$-dimensional subspace $W\subset\R^{n+1}$
$$
\exists ~\bx\in B\quad T_{\bx}\cap W=\{\mathbf{0}\}.$$ 
\end{remark}
\begin{remark}
    Now, we give another interpretation of $d-1$-high nondegeneracy condition. Let us consider the following subset for a fixed subspace $W$ in $\R^{n+1}$ we define the following Schubert variety:
$$\Sigma_{W}=\{V\in \mathrm{Gr}(d+1, n+1)~|~ \dim(W\cap V)\ge 1\}.$$ Then our condition is equivalent to saying that for any ball $B$ 
$$
\exists\;\bx\in B \qquad T_{\bx}\notin \Sigma_{W}~ \forall~ W\in \mathrm{Gr}(m, n+1).$$

\end{remark}
\begin{remark}
Note that high nondegeneracy as introduced above is weaker than the \textit{strong curvature} conditions imposed in the works of \cite{Sc-Yam} and many subsequent papers, as these strong curvature conditions put constraints on the dimension of the manifold, whereas our condition is generic in the sense that no dimension restrictions are implied.
\end{remark}

\begin{remark}
We note down some examples which do not satisfy the high nondegeneracy. Let us consider $F(x,y)=(x,y, y^2,y^3)$ in $\R^4$; then it is not hard to see that $F$ is nondegenerate but not $1$-high nondegenerate. The corresponding manifold will contain a (rational) line.  
For a more subtle example of a non-degenerate but not $1$-high nondegenerate, take $$
F(x,y)=(x,y, (x^\pi+y^2), y^3),
$$
then for every $(x,y)$
$$
T_{x,y} \cap \big(\R(0,1,0,0,0)+\R(0,0,0,1,0)\big)\neq\{\mathbf{0}\}.$$ 
\end{remark}

The method we developed in the main part of the paper yields a better exponent by imposing the condition of high nondegeneracy. The corresponding result is as follows.

\begin{theorem}\label{thm:new main thm}
   Suppose  $F:\bU\to \R^n$ is $k$-high nondegenerate at $\bx_0\in \bU$ for some $0\leq k\leq d-1,$ and analytic then 
$$\eta(\cM)\geq \frac{3+k}{2m+1+k}.$$
\end{theorem}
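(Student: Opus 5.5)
The plan is to run the argument of \S3--\S4 again, replacing Lemma~\ref{lem:crucial nabla} (which only gives $0$-high nondegeneracy) by the $k$-high condition \eqref{eq5.1}. The counting part of \S4 (Lemmas~\ref{lem: mostly good}, \ref{lem: one good point}, \ref{gcd}, \ref{reduced_basis} and Proposition~\ref{prop: count on major}) does not use analyticity or the exponent $3/(2m+1)$. Its only input is Lemma~\ref{lem: minor is small}, namely that the special set $\mathfrak{M}(\ep,t)$ has measure at most $\kappa\mu_d(B_0)$. So it suffices to prove Lemma~\ref{lem: minor is small} in the range $\ep\gg e^{-t(3+k)/(2m+1+k)}$ and then repeat the proof of Theorem~\ref{thm: main for F} verbatim.

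\textbf{Step 1: a $k$-version of Proposition~\ref{prop: crucial}.} I would keep $h(\bx)=bu_1^\star(\bx)$ and the diagonal shape \eqref{eq: condition on b}. The $(C,\alpha)$-good property and the rank-one case $r=1$ go through unchanged, via Lemmas~\ref{lem: good func} and~\ref{lem: just for rank 1}. For $r\ge2$ I would again use Lemma~\ref{lem: dual prim} and the Laplace identity \eqref{eq: covolume}, so that the $I$-coordinate of $h(\bx)\bv_1\wedge\dots\wedge h(\bx)\bv_r$ is, up to the weight
\[
b_0\, b_1^{s+1}\, b_{d+1}^{r-2-s},
\]
a pairing of $\tilde F\wedge\bigwedge_{k'}\partial_{i_{k'}}\tilde F$ with an integer multivector. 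The case split of Proposition~\ref{prop: crucial} becomes: if the minimal $s$ satisfies $s\ge k$, the argument of Case~2 applies as before. If $s<k$, I would enlarge the index set by more derivative directions until exactly $k+1$ derivatives appear. I would use Lemma~\ref{lem: helpful-general} to rewrite the complementary multivector as a $(k+2)$-fold wedge $\ba_1\wedge\dots\wedge\ba_{k+2}$ of integer vectors with norm $\ge1$. Then \eqref{eq5.1}, made uniform over such wedges of norm $\ge1$ by the same compactness-on-the-Grassmannian argument as in Lemma~\ref{LemSG} and Lemma~\ref{lem:crucial nabla}, gives a uniform $\rho_{k}>0$. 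The outcome should be that the weaker hypothesis \eqref{b_0 is too large 2} can be replaced by $b_0b_1^{k+1}b_{d+1}^{m-k}\ge1$, because every weight that actually occurs dominates this one when $b_1\ge1\ge b_{d+1}$.

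\textbf{Step 2: a $k$-version of Theorem~\ref{new thm: BKM}, and then of Lemma~\ref{lem: minor is small}.} I would choose
\[
b_0=\delta^{-1}\lambda,\qquad b_1=K^{-1}\lambda,\qquad b_{d+1}=T^{-1}\lambda,
\]
with $\lambda$ chosen so that the Step~1 condition becomes an equality. With this choice,
\[
\Deltanew(\delta,K,T)\subset\{\lambda_1(bu_1^\star\Z^{n+1})\le\lambda\},
\]
and Theorem~KM gives a measure $\ll\lambda^{\alpha}$. I would then insert $\delta=e^{-t}$, $K=(\ep e^t)^{-1/2}$, $T=\ep^{-1}$ from \cite[Lemma 6.7]{BD}, together with the conditions on $\delta,K,T$ analogous to \eqref{condition on K T}. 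The point is to check that $\lambda\to0$ exactly when $\ep\gg e^{-t(3+k)/(2m+1+k)}$. Since $\lambda$ is a product of powers of $\ep$ and $e^{t}$, this reduces to comparing exponents.

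\textbf{Main obstacle.} I expect the difficulty to lie in Step 1, in two places. First, I must verify that the weight attached to every admissible multi-index, including the mixed cases $0<s<k$ and small $r$, is bounded below by the single normalised weight. This bookkeeping is what fixes the exact exponent, and if my first choice of $\lambda$ gives a different exponent I would rebalance the powers of $b_1$ and $b_{d+1}$. Second, I must make \eqref{eq5.1}, which is only a nonvanishing statement, effective and uniform. I would try to do this by compactness of normalised decomposable $(k+2)$-vectors, combined with the analyticity of $\bx\mapsto\tilde F\wedge\bigwedge_{i\in I}\partial_i\tilde F$ and lower semicontinuity of the supremum.
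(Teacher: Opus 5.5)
Your route is the paper's: a $k$-version of Proposition~\ref{prop: crucial}, then of Theorem~\ref{new thm: BKM} and Lemma~\ref{lem: minor is small}, then the unchanged counting of \S4. But the central quantitative claim of Step~1 is false. The hypothesis cannot be relaxed to $b_0b_1^{k+1}b_{d+1}^{m-k}\ge1$.

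\textbf{Where the miscount is.} Your bound $m-k$ on the number of factors $\be_j$, $d+1\le j\le n$, is valid only when the original minimal $s$ equals $0$, which forces $r-2\le m$.
\begin{itemize}
\item If $0<s<k$, you drop only $k-s$ of the $r-2-s\le m$ factors $\be_j$. The exponent of $b_{d+1}$ can then reach $m-(k-s)>m-k$.
\item In Case~2 with $s=k$ (for instance $r=m+k+2$ with all of $\be_{d+1},\dots,\be_n$ present), the weight is exactly $b_0b_1^{k+1}b_{d+1}^{m}$. This is smaller than your normalised weight when $k\ge1$ and $b_{d+1}<1$.
\end{itemize}

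\textbf{This is sharp.} For $r=n+1$ the multivector has norm $\det b=b_0b_1^db_{d+1}^m$, since $\det u_1^\star=1$, so there is no choice of $I$ to improve. Take $d=2$, $m=k=1$. Your condition reads $b_0b_1^2\ge1$, which allows $b_0=b_1=1$ and $b_3\to0$. Minkowski then gives $\lambda_1(bu_1^\star(\bx)\Z^{4})\ll(\det b)^{1/4}=b_3^{1/4}$ for every $\bx$. So the measure estimate you want fails.

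\textbf{The same error shows up in Step~2.} Your normalisation gives $\lambda=(\delta K^{k+1}T^{m-k})^{1/(m+2)}$. With $\delta=e^{-t}$, $K=(\ep e^t)^{-1/2}$, $T=\ep^{-1}$ this yields $\delta K^{k+1}T^{m-k}=e^{-t(k+3)/2}\ep^{-(2m+1-k)/2}$. The resulting range is $\ep\gg e^{-t(3+k)/(2m+1-k)}$, not the one you set out to verify. For $d=2$, $m=k=1$ this would give $\eta(\cM)\ge2$ for the sphere $x^2+y^2+z^2=7$. That sphere is analytic with nonvanishing curvature, hence $1$-high nondegenerate. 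Yet $N(\ep,t)=0$ for $\ep<c_0e^{-t}$.

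\textbf{The fix.} Use the paper's normalisation and require $b_0b_1^{k+1}b_{d+1}^m\ge1$, as in Proposition~\ref{prop: crucial_new}. Every weight $b_0b_1^{s+1+k}b_{d+1}^{r-2-s-k}$ with $r-2-s-k\le m$ dominates this one. Then take $\lambda=(\delta K^{k+1}T^m)^{1/(m+k+2)}$. Conditions \eqref{condition on K T2} are immediate for the above $\delta,K,T$ in the relevant range. Moreover $\delta K^{k+1}T^m=e^{-t(k+3)/2}\ep^{-(2m+k+1)/2}$, which is small exactly when $\ep\gg e^{-t(3+k)/(2m+1+k)}$.

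\textbf{The rest of your plan.} With this correction it goes through.
\begin{itemize}
\item Your compactness and lower-semicontinuity argument over normalised decomposable $(k+2)$-vectors is a good way to justify the uniform $\rho_2$ that the paper only asserts.
\item The dual $(k+2)$-vector comes from Lemma~\ref{lem: dual prim}, not Lemma~\ref{lem: helpful-general}.
\item For $2\le r\le k+1$, the $k+1$ derivatives do not fit into $I$. Use $I=\{0,i_1,\dots,i_{r-1}\}$, whose weight is $b_0b_1^{r-1}\ge1$. The needed lower bound is then the $(r-2)$-high condition. It follows from $k$-high nondegeneracy applied to any $(k+2)$-dimensional space containing $\Span(\bv_1,\dots,\bv_r)$: project onto the smaller space and use basis exchange.
\end{itemize}
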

For $k=d-1$, this gives
$ 
\eta(\mathcal{M})\geq \frac{d+2}{2m+d}$, which is the best one can get with this approach for all $(d-1)$-high nondegenerate manifolds and by plugging $k=0$, we get 
$\eta(\mathcal{M})\geq \frac{3}{2m+1}$ for all nondegenerate manifolds.

The proof of the above is the same as that of  Theorem~\ref{thm: main} once Proposition~\ref{prop: crucial} is replaced with the following modification, which is crucial for the improvement.

\begin{proposition}\label{prop: crucial_new}
    Let $F:\bU\to\R^n$ be analytic and $k$-high nondegenerate at $\bx_0\in\bU$ for some $0\leq k\leq d-1. $
Then there exists a neighbourhood $V$ of $\bx_0$ and $\alpha>0$ depending on $F$ and $\bx_0$ such that for any ball $B\subset V$ there exists $E>0$ such that for any $b=\diag(b_0, b_1,\dots, b_n)\in \mathrm{GL}_{n+1}(\R)$ satisfying
\begin{equation}\label{eq: condition on b_new}
b_0\geq 1,\quad b_{d+1}=\dots=b_n\leq 1,\quad b_{1}=\dots=b_d\geq 1,
\end{equation}
and
\begin{equation}\label{b_0 is too large 2_new}
    b_0 b_1^{k+1} b_{d+1}^{m}\geq 1,
\end{equation}
for any $\ep>0$, 
\begin{equation}
    \mu_d\{\bx\in B~|~\lambda_1(bu_1^\star(\bx)\Z^{n+1})<\ep\}\leq E \ep^\alpha \mu_d(B).
\end{equation}

\end{proposition}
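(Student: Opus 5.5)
The proof will follow that of Proposition~\ref{prop: crucial} almost verbatim. We apply Theorem~KM to $h(\bx)=bu_1^\star(\bx)$. Condition~(1) (the $(C,\alpha)$-good property on $3^{n+1}B$) comes from Lemma~\ref{lem: good func}, since every coordinate of $h(\bx)\bv_1\wedge\dots\wedge h(\bx)\bv_r$ is a linear combination of $1,F_1,\dots,F_n$ and their first partial derivatives, with constant coefficients. For $r=1$, condition~(2) follows from Lemma~\ref{lem: just for rank 1} and $b_0\ge1$. So the whole task is to establish the lower bound \eqref{want: lower bound on covol} for primitive collections with $r\ge2$, using \eqref{b_0 is too large 2_new} in place of \eqref{b_0 is too large 2}.

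For $r\ge2$ we use Lemma~\ref{lem: dual prim} to pass to the dual primitive collection $\bu_1,\dots,\bu_{n+1-r}$, and we use the coordinate formula \eqref{eq: covolume} for multi-indices $I=\{0,i_1,\dots,i_{s+1},j_1,\dots,j_{r-2-s}\}$. As before, we let $s$ be minimal such that \eqref{eq: wedge nonzero} holds for some choice of $j$'s. The case split is now by $s$ versus $k$.

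\textbf{Case $s\ge k$.} This is Case~2 of the earlier proof, unchanged. Lemma~\ref{lem: helpful-general} absorbs $\be_{d+1},\dots,\be_n$ into the wedge, and the relevant coordinate reduces to $|\ell\,\tilde F(\bx)\cdot\ba|$ with an integer $\ba\neq\mathbf0$. (For $s\ge1$ the derivatives $\partial_{i_k}\tilde F$ can be replaced by $\be_{i_k}$ modulo the span of $\be_{d+1},\dots,\be_n$.) Lemma~\ref{lem: just for rank 1} then gives a lower bound $\rho_1$ times $b_0b_1^{s+1}b_{d+1}^{r-2-s}\ge b_0b_1^{k+1}b_{d+1}^m\ge1$. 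This uses $b_1\ge1$, $b_{d+1}\le1$ and $r-2-s\le m$. For $s=k=0$ the argument is Case~1 instead.

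\textbf{Case $s<k$.} This is the genuinely new case and the main obstacle. The prefactor $b_0b_1^{s+1}b_{d+1}^{r-2-s}$ is no longer bounded below by \eqref{b_0 is too large 2_new}, so we must use $k+1$ derivative directions instead of $s+1$. The plan is to take multi-indices with exactly $k+1$ indices from $\{1,\dots,d\}$ and $r-2-k$ indices $j$ from $\{d+1,\dots,n\}$. This requires first showing that, when $s<k$, some choice of $j_1,\dots,j_{r-2-k}$ keeps $\bigwedge\be_{j}\wedge\bu_1\wedge\dots\wedge\bu_{n+1-r}\neq\mathbf0$. That holds because removing vectors from a nonzero wedge keeps it nonzero, so we can drop $k-s$ of the $j$'s from the collection realizing minimal $s$.

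Next we apply Lemma~\ref{lem: dual prim} to this $(n-1-k)$-vector. This produces linearly independent integer vectors $\ba_1,\dots,\ba_{k+2}$ with the norm of their wedge equal to the norm of the multivector, hence $\ge1$ since it is a nonzero integer multivector. Lemma~\ref{lem: dual basis consq} then identifies the $I$-coordinate with $b_0b_1^{k+1}b_{d+1}^{r-2-k}\,|(\tilde F\wedge\bigwedge_{i\in I'}\partial_i\tilde F)\cdot(\ba_1\wedge\dots\wedge\ba_{k+2})|$.

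The $k$-high nondegeneracy \eqref{eq5.1} says only that this quantity is nonzero for each fixed tuple. To obtain a uniform lower bound $\rho_3>0$, we normalize to $\|\ba_1\wedge\dots\wedge\ba_{k+2}\|=1$. The set of such decomposable unit multivectors is compact in the Grassmannian, and the supremum over $B$ is continuous and positive on it, so its minimum is positive. This is the same compactness argument as in Lemma~\ref{lem:crucial nabla}. The norm $\ge1$ of the integer wedge then transfers the bound to all integer tuples.

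With $\rho=\min\{\rho_1,\rho_2/2,\rho_3,1/(n+1)\}$, Theorem~KM yields the conclusion. One point still needs care: whether $r-2-k\le m$ holds in every subcase, so that $b_{d+1}^{r-2-k}\ge b_{d+1}^m$. It does, because the $j$'s are distinct elements of $\{d+1,\dots,n\}$.
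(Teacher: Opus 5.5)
Your argument follows the paper's proof. You use the same coordinate formula via Lemma~\ref{lem: dual prim} and Lemma~\ref{lem: dual basis consq}. When at least $k+1$ derivative indices are forced you reduce through Lemma~\ref{lem: helpful-general} to Lemma~\ref{lem: just for rank 1}, and otherwise you use \eqref{eq5.1}. Your split ($s\ge k$ versus $s<k$) differs from the paper's only in sending the boundary case $s=k$ to the Lemma~\ref{lem: helpful-general} branch; both branches work there. Your compactness argument usefully justifies the uniform constant that the paper merely asserts.

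There is, however, a gap. If $k\ge1$ and $2\le r\le k+1$, every admissible $s$ satisfies $s\le r-2<k$, so you are in your case $s<k$. That case needs $r-2-k\ge0$ indices $j$, and dropping $k-s$ of the $r-2-s$ available $j$'s is impossible. You checked $r-2-k\le m$ but not $r-2-k\ge 0$. (The paper's written proof is also silent here: its range $0\le s\le r-2-k$ is empty.) You cannot fall back on the argument of Proposition~\ref{prop: crucial} either. Its prefactor $b_0b_1^{s+1}b_{d+1}^{r-2-s}$ with $s<k$ need not be bounded below under \eqref{b_0 is too large 2_new}; take $b_0=1$, $b_1=N$, $b_{d+1}=N^{-(k+1)/m}$.

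To close the gap, take $I=\{0,i_1,\dots,i_{r-1}\}$ with all $i_p\in\{1,\dots,d\}$, which is possible since $r-1\le k\le d-1$. The prefactor is $b_0b_1^{r-1}\ge1$, and the coordinate is $b_0b_1^{r-1}|(\tilde F\wedge\partial_{i_1}\tilde F\wedge\dots\wedge\partial_{i_{r-1}}\tilde F)\cdot(\bv_1\wedge\dots\wedge\bv_r)|$.

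For $r=2$, Lemma~\ref{lem:crucial nabla} finishes. For $3\le r\le k+1$ you need $(r-2)$-high nondegeneracy, and this follows from \eqref{eq5.1}:
\begin{itemize}
\item Suppose that, for linearly independent $\ba_1,\dots,\ba_r$, every $r\times r$ minor through the first row of the matrix with rows $(\tilde F\cdot\ba_q)_{q\le r}$ and $(\partial_i\tilde F\cdot\ba_q)_{q\le r}$ ($1\le i\le d$) vanishes on $B$.
\item Then at each point where the first row is nonzero, the matrix has rank at most $r-1$.
\item Complete to $\ba_1,\dots,\ba_{k+2}$. At those points the first $r$ columns of every matrix in \eqref{eq5.1} are dependent, so those determinants vanish.
\item Such points are dense, because $\tilde F\cdot\ba_1\not\equiv0$ on $B$; otherwise the $\ba_1$-column would vanish identically, already contradicting \eqref{eq5.1}.
\item By continuity \eqref{eq5.1} fails, a contradiction. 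Your compactness argument then makes the bound uniform.
\end{itemize}

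A smaller point: your justification of condition~(1) is inaccurate. The coordinates are not linear in $F$ and its first derivatives; for example $f_{j'}\partial_if_j-f_j\partial_if_{j'}$ occurs. By Cauchy--Binet they are integer combinations of the $r\times r$ minors of $u_1^\star(\bx)$. So apply Lemma~\ref{lem: good func} to that finite analytic family, and use that a maximum of $(C,\alpha)$-good functions is $(C,\alpha)$-good.
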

\begin{proof}
The proof is the same as that of Proposition~\ref{prop: crucial}
with some modification that we will point out here. For any primitive collection $\bv_1,\dots,\bv_r\in\Z^{n+1}$, let  $\bu_1,\dots,\bu_{n+1-r}\in\Z^{n+1}$ be the primitive collection
as in Lemma~\ref{lem: dual prim}.
Take any multi-index of the form
$
I=\{0,i_1,\dots, i_{s+k+1}, j_1, \dots, j_{r-2-s-k}\},
$
where $
0\le s\le r-2-k,\; 
\{j_1,\dots, j_{r-2-s-k}\}\subset \{d+1,\dots, n\},\; \{i_1,\dots,i_{s+1+k}\}\subset \{1,\dots, d\}.$
Then, 
\begin{equation}\label{eq: covolume_new}
\begin{aligned}
    &\vert (h(\bx)\bv_1\wedge\dots\wedge h(\bx)\bv_r)_I\vert\\
    &=b_0 \prod_{p=1}^{s+1+k} b_{i_p}\prod_{p=1}^{r-2-s-k}b_{j_p}\vert \tilde F(\bx)\wedge_{p=1}^{s+1+k} \partial_{i_p}\tilde F(\bx)\wedge_{p=1}^{r-2-s-k} \be_{j_p}\wedge\bu_1\wedge \dots\wedge \bu_{n+1-r}\vert. 
\end{aligned}
\end{equation}

Let $s$ be the smallest integer in the range $0\le s\le r-2-k$ such that there exists a $\{j_1,\dots,j_{r-2-s-k}\}\subset \{d+1,\dots, n\}$ satisfying
\begin{equation}\label{eq: wedge nonzero_new}
\wedge_{p=1}^{r-2-s-k} \be_{j_p}\wedge\bu_1\wedge \dots\wedge \bu_{n+1-r}\neq \mathbf{0}.\end{equation} 
We will consider two different scenarios for the value of $s$.

\medskip

\noindent\textbf{Case $1$:} $s=0$. Note that the $s=0$ case can only happen if $r\leq m+2+k$. In this case, by Lemma~\ref{lem: dual prim}, there exists $\ba
_1,\cdots, \ba_{k+2}\in\Z^{n+1}$ orthogonal to every vector appearing in \eqref{eq: wedge nonzero_new}. Then, using Lemma~\ref{lem: dual basis consq} we get that for any $\{i_1,\cdots, i_{k+1}\}\subset \{1,\dots, d\}$
$$
\begin{aligned}
&\vert \tilde F(\bx)\bigwedge_{p=1}^{k+1} \partial_{i_p}\tilde F(\bx)\wedge_{p=1}^{r-2-k} \be_{j_p}\wedge\bu_1\wedge \dots\wedge \bu_{n+1-r}\vert 
 =\vert (\tilde F(\bx)\bigwedge_{p=1}^{k+1} \partial_{i_p}\tilde F(\bx))\cdot (\ba_1\wedge\cdots\wedge \ba_{k+2})\vert.
\end{aligned}
$$

\noindent By the definition of $k$-high nondegeneracy, there exists $\bx\in B$ and $\{i_1,\cdots, i_{k+1}\}\subset \{1,\dots, d\}$ such that for some $\rho_2>0$ independent from $\bv_1,\dots,\bv_r$
$$\vert (\tilde F(\bx)\bigwedge_{p=1}^{k+1} \partial_{i_p}\tilde F(\bx))\cdot (\ba_1\wedge\cdots\wedge \ba_{k+2})\vert>0.$$
Then for  $I=\{0, i_1, \cdots, i_{k+1}, j_1,\dots, j_{r-2-k}\}$ we obtain, following the same lines of argument as in the proof of Proposition~\ref{prop: crucial}, that
\begin{equation*}
    \sup_{\bx\in B}\Vert h(\bx)(\bv_1\wedge\dots\wedge\bv_r)\Vert\geq \vert h(\bx_1)(\bv_1\wedge\dots\wedge\bv_r)\vert_{I}\stackrel{\eqref{eq: condition on b_new}}{>} b_0 b_1^{k+1} b_{d+1}^{m}\rho_2/2\stackrel{\eqref{b_0 is too large 2_new}}{>}\rho_2/2>0.
\end{equation*}
since $b_{d+1}\leq 1.$

\medskip

\noindent\textbf{Case $2$:}
The proof for this case is the same as in Case $2$ of the proof of Proposition~\ref{prop: crucial}, except that $r-2$ is replaced by $r-2-k$. Then the same calculation yields,
 \begin{equation}
     \sup_{\bx\in B}\Vert h(\bx)(\bv_1\wedge\dots\wedge\bv_r)\Vert>b_0 b_1^{s+1+k} b_{d+1}^{r-s-2-k}\rho_1> b_0b_1^{k+1} b_{d+1}^{m} b_1^{s} >\rho_1>0.
 \end{equation}
Here, we used the fact that $r-s-2-k\leq m$, and that $b_{d+1}\leq 1$. 
Thus both of the cases combined yield \eqref{want: lower bound on covol}, and the proposition follows from Theorem KM.
\end{proof}

Thus using above proposition we get the following modified version of Theorem~\ref{new thm: BKM}. \begin{theorem}\label{new thm: BKM_new}
Let $F,\bU, m,d,n$ be as in Theorem~\ref{new thm: BKM}. Suppose further that $F$ is $k$-high nondegenerate at $\bx_0$ for some $\bx_0\in \bU$. Then there exists an open neighborhood $V\ni\bx_0$ and $\alpha>0$ with the following property. For any ball $B\subset V$, there exists a constant $E$ such that for any choice of $\delta>0$, $K>0$, and $T>0$ satisfying
\begin{equation}\label{condition on K T2}
 \max\{\delta^{m+2+k}, K^{m+k+2}\}\leq \delta K^{k+1} T^m,\qquad \delta K^{k+1}T^m\leq T^{m+2+k},
\end{equation}
we have that
\begin{equation}\label{eq: crucial improvement2}
\mu_d(\Deltanew(\delta, K, T)\cap B)\leq E (\delta K^{k+1}T^m)^{\alpha/(m+k+2)}\mu_d(B)\,.
\end{equation}
\end{theorem}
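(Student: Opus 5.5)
We derive Theorem~\ref{new thm: BKM_new} from Proposition~\ref{prop: crucial_new}, exactly as Theorem~\ref{new thm: BKM} was derived from Proposition~\ref{prop: crucial}; only the normalising exponents change.

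Set $P:=\delta K^{k+1}T^m$ and $\theta:=P^{1/(m+k+2)}$. Choose
$$
b_0=\delta^{-1}\theta,\qquad b_1=\dots=b_d=K^{-1}\theta,\qquad b_{d+1}=\dots=b_n=T^{-1}\theta .
$$
The exponent $m+k+2$ is the number of factors in $b_0b_1^{k+1}b_{d+1}^m$, namely $1+(k+1)+m$. This gives
$$
b_0b_1^{k+1}b_{d+1}^m=\delta^{-1}K^{-(k+1)}T^{-m}\,\theta^{m+k+2}=1,
$$
so \eqref{b_0 is too large 2_new} holds with equality. The conditions \eqref{eq: condition on b_new} read $\theta\ge\delta$, $\theta\ge K$ and $\theta\le T$, which are equivalent to $\delta^{m+k+2}\le P$, $K^{m+k+2}\le P$ and $P\le T^{m+k+2}$. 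These are precisely the hypotheses \eqref{condition on K T2}.

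Next comes the inclusion
$$
\Deltanew(\delta,K,T)\subset\{\bx\in\bU:\lambda_1(bu_1^\star(\bx)\Z^{n+1})\le\theta\}.
$$
To see it, take $\bx\in\Deltanew(\delta,K,T)$ with witness $(a_0,\ba)$, and apply $u_1^\star(\bx)$ from \eqref{eqn:dual u_1(x)} to the integer vector $(a_0,a_1,\dots,a_n)^\top$. The first coordinate is $a_0-\bx\cdot(a_1,\dots,a_d)-f(\bx)\cdot(a_{d+1},\dots,a_n)$, which up to the sign convention for $\ba$ equals $a_0+F(\bx)\ba^\top$. The next $d$ coordinates are $\nabla F(\bx)\ba^\top$, using $\nabla F=(\mathrm{I}_d,J)$. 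The last $m$ coordinates are $a_{d+1},\dots,a_n$. Multiplying by $b$ bounds these entries by $\delta\cdot\delta^{-1}\theta$, $K\cdot K^{-1}\theta$ and $T\cdot T^{-1}\theta$ respectively, so every entry is at most $\theta$. The vector is nonzero because $\ba\ne\bf0$ and $u_1^\star$ is invertible. This step is the same computation as in \S3.1, with only the exponent in $\theta$ changed, so I expect no difficulty here.

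Finally, apply Proposition~\ref{prop: crucial_new} with $\ep=\theta$ (or with $2\theta$, to pass from $\le$ to $<$). Take $V$ and $\alpha$ from that proposition, and for each ball $B\subset V$ take the constant $E$ it provides. Then
$$
\mu_d(\Deltanew(\delta,K,T)\cap B)\le E\,(2\theta)^\alpha\mu_d(B)\ll P^{\alpha/(m+k+2)}\mu_d(B),
$$
which is \eqref{eq: crucial improvement2} after absorbing $2^\alpha$ into $E$.

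The one point needing care is that Proposition~\ref{prop: crucial_new} gives a neighbourhood $V$ and constant $E$ that are uniform over all admissible diagonal $b$. This matters because our $b$ depends on $(\delta,K,T)$. The uniformity holds because $\rho_1,\rho_2$ in its proof depend only on $B$ and $F$, not on $b$. So the real content sits in Proposition~\ref{prop: crucial_new}, and the present deduction is routine bookkeeping.
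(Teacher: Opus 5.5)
Your proposal is correct and follows essentially the same route as the paper: the same choice of $b$ with $\theta=(\delta K^{k+1}T^m)^{1/(m+k+2)}$, the same check of \eqref{eq: condition on b_new}--\eqref{b_0 is too large 2_new} against \eqref{condition on K T2}, the same inclusion of $\Deltanew(\delta,K,T)$ into $\{\lambda_1(bu_1^\star(\bx)\Z^{n+1})\ll\theta\}$, and then an application of Proposition~\ref{prop: crucial_new}. Your explicit handling of the strict-versus-nonstrict inequality by a constant factor, and your note that $E$ is uniform in $b$, only make precise what the paper leaves implicit.
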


\begin{proof}

We will apply Proposition~\ref{prop: crucial_new}, with 
$$\begin{aligned}
&b_0=\delta^{-1}(\delta K^{k+1}T^m)^{1/(m+k+2)},\\ &  b_i= K^{-1}(\delta K^{k+1} T^m)^{1/(m+k+2)}, \quad 1\leq i\leq d,\\ & b_{j}=T^{-1}(\delta K^{k+1} T^m)^{1/(m+k+2)}, \quad d+1\leq j\leq n+1.
\end{aligned}
$$ 

Conditions \eqref{eq: condition on b_new} and \eqref{b_0 is too large 2_new} imposed on $b$ within Proposition~\ref{prop: crucial_new} are satisfied. Using the form of $F$ given by \eqref{eq: F} and the definitions of $b$ and $u_1^*$ we observe that
$$
\Deltanew(\delta, K, T)\subset \{\bx\in \bU~|~\lambda_1(bu_1^\star(\bx)\Z^{n+1})\leq (\delta K^{k+1} T^m)^{1/{m+k+2}}\}.
$$ 
\end{proof}

As an application we get the following generalisation of Lemma~\ref{lem: minor is small}.

\begin{lemma}\label{lem: minor is small_new}
Let $F$ be an analytic map of the form \eqref{eq: F}, $k$-high nondegenerate at $\bx_0\in\bU$. Let $\kappa>0$. Then there exists a ball $B_0$ centred at $\bx_0$ and constant $c=c(B_0, F, n, d,\kappa)>1$ such that for any $t\geq 1$ and any $\ep\in(0,1)$ satisfying \begin{equation}\label{lower bound on ep_new}
    \ep> c\,e^{-\frac{3t}{2m+1}},
\end{equation}
we have that
$$\mu_d(\mathfrak{M}(\varepsilon,t)\cap B_0)\leq  \kappa\mu_d(B_0).
$$ Moreover, $\mathfrak{M}(\varepsilon,t)$ can be written as a union of balls of radius $\ep^{1/2}e^{-t/2}$ of intersection multiplicity at most $N_d$, the Besicovitch constant.
\end{lemma}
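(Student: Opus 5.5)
The plan is to repeat the proof of Lemma~\ref{lem: minor is small} verbatim, with Theorem~\ref{new thm: BKM} replaced by Theorem~\ref{new thm: BKM_new}. First, by \cite[Lemma 6.7]{BD} (which uses only the form \eqref{eq: F} of $F$), we have $\mathfrak{M}(\ep,t)\subset\Deltanew(c_2\delta,c_2K,c_2T)$ with
$$
\delta=e^{-t},\qquad K=(\ep e^t)^{-1/2},\qquad T=\ep^{-1}.
$$
Take $B_0$ to be a ball centred at $\bx_0$ inside the neighbourhood $V$ supplied by Theorem~\ref{new thm: BKM_new}. The constant $c_2$ only affects the implied constants, so I will suppress it.

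Next I verify the hypotheses \eqref{condition on K T2}. By \eqref{lower bound on ep_new} with $c>1$ we have $\ep e^t\ge 1$, hence $K\le 1\le T$, and also $\delta\le K$.
\begin{itemize}
\item $\delta^{m+k+2}\le K^{m+k+2}$, because $\delta\le K$.
\item $K^{m+k+2}\le\delta K^{k+1}T^m$. This is equivalent to $K^{m+1}\le e^{-t}\ep^{-m}$, i.e. $(\ep e^t)^{-(m+1)/2}\le e^{-t}\ep^{-m}$. That in turn reduces to $\ep^{(m-1)/2}\le e^{t(m-1)/2}$, which holds since $\ep<1\le e^t$.
\item $\delta K^{k+1}T^m\le T^{m+k+2}$, because $\delta K^{k+1}\le 1\le T^{k+2}$.
\end{itemize}

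The key computation is
$$
\delta K^{k+1}T^m=e^{-t(3+k)/2}\,\ep^{-(2m+1+k)/2}=\Big(e^{-\frac{(3+k)t}{2m+1+k}}\ep^{-1}\Big)^{(2m+1+k)/2}.
$$
Theorem~\ref{new thm: BKM_new} therefore bounds $\mu_d(\mathfrak{M}(\ep,t)\cap B_0)$ by $E\,(\ep^{-1}e^{-(3+k)t/(2m+1+k)})^{\alpha(2m+1+k)/(2m+2k+4)}\mu_d(B_0)$.

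It remains to compare this with the hypothesis \eqref{lower bound on ep_new}. Note that $\frac{3+k}{2m+1+k}\ge\frac{3}{2m+1}$. Cross-multiplying, this is equivalent to $2mk\ge 2k$, which holds because $m\ge1$. Hence $\ep>c\,e^{-3t/(2m+1)}\ge c\,e^{-(3+k)t/(2m+1+k)}$, so the bound above is at most $E\,c^{-\alpha(2m+1+k)/(2m+2k+4)}\mu_d(B_0)$. Choosing $c$ large enough in terms of $E$, $\kappa$ and $\alpha$ makes this at most $\kappa\mu_d(B_0)$. The same argument works under the weaker hypothesis $\ep>c\,e^{-(3+k)t/(2m+1+k)}$, which is the range actually needed for Theorem~\ref{thm:new main thm}.

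The ``moreover'' part is unchanged from Lemma~\ref{lem: minor is small}: $\mathfrak{M}(\ep,t)$ is by definition a union of balls of radius $\ep^{1/2}e^{-t/2}$ centred at points of $\mathfrak{M}_0(\ep,t)$, and the Besicovitch covering theorem extracts a subcover of multiplicity at most $N_d$.

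The only point needing care is the bookkeeping in verifying \eqref{condition on K T2}. I expect no real obstacle, since the analytic input is entirely contained in Theorem~\ref{new thm: BKM_new}.
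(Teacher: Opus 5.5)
Your proposal is correct and matches the paper's approach: the paper's proof is precisely the proof of Lemma~\ref{lem: minor is small} with Theorem~\ref{new thm: BKM_new} substituted for Theorem~\ref{new thm: BKM}. Your verification of \eqref{condition on K T2} is more explicit than the paper's, and suppressing $c_2$ is harmless because every inequality there is homogeneous of degree $m+k+2$ in $(\delta,K,T)$. You are also right that the same computation gives the stronger range $\ep>c\,e^{-(3+k)t/(2m+1+k)}$, which is what Theorem~\ref{thm:new main thm} actually needs; the range displayed in \eqref{lower bound on ep_new} appears to have been carried over unchanged from the case $k=0$.
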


The proof of the above is exactly the same as in Lemma~\ref{lem: minor is small}; we get a better range as in \eqref{lower bound on ep_new} due to a better measure estimate in Theorem~\ref{new thm: BKM_new}. The proof of Theorem~\ref{thm:new main thm} follows now from Lemma~\ref{lem: minor is small_new} and Proposition~\ref{prop: count on major}.

\subsubsection*{Acknowledgment}
SD thanks Nicolas de Saxcé for pointing to her that conditions appearing in Section 5 can be written in terms of avoiding certain varieties in the Grassmannian. The authors were supported by an EPSRC grant EP/Y016769/1. SD was also supported by a \textit{start-up} grant provided by the Indian Statistical Institute. She thanks the mathematical research institute MATRIX in Australia, where part of this research was presented. Also, she must thank Ben, Gaurav and Manuel for making the great ocean road trip possible while all of them were at MATRIX.

\bibliographystyle{abbrv}
\bibliography{inhomosim}

\end{document}